\documentclass[10pt, a4paper, reqno]{amsart}

\usepackage{tikz-cd}
\usepackage{array}

\usetikzlibrary{snakes}

\usepackage{amsmath, amssymb, amsfonts, amstext, amsthm, amscd}
\usepackage[mathscr]{euscript}
\usepackage{enumerate}

\usepackage{float}
\usepackage{graphicx}

\usepackage[raiselinks=false,colorlinks=true,citecolor=blue,urlcolor=blue,linkcolor=blue,bookmarksopen=true,pdftex, backref=page]{hyperref}

\usepackage[paperheight=11.4in,paperwidth=8.1in,bindingoffset=0in,left=0.5in,right=0.5in,top=0.5in,bottom=0.5in,headsep=.5\baselineskip]{geometry}
\usepackage{tikz}
\usepackage{tikz-cd}
\usetikzlibrary{snakes}
\usetikzlibrary{intersections, calc}
\usetikzlibrary{decorations.markings}%for middle arrow
\usetikzlibrary{arrows.meta}% arrow tip library
\usetikzlibrary{bending}% better arrow head for bended lines

\usepackage[english]{babel}
\usepackage{chngcntr}

\newcommand\Sym{\operatorname{Sym}}

\newcommand\tensor{\otimes}

\newcommand\tesnor{\otimes}

\newcommand\tr{\operatorname{tr}}

\newcommand\Spec{\operatorname{Spec}}

\newcommand\End{\operatorname{End}}

\usepackage{amsmath, amssymb, amsfonts, amstext, amsthm, amscd}

\usepackage{mathrsfs}

\newcommand{\OU}{{\mathscr{O}_U}}
\newcommand{\OS}{{\mathscr{O}_S}}

\newcommand\bq{\begin{equation}}
\newcommand\eq{\end{equation}}

\newtheorem{proposition}{Proposition}[section]
\newtheorem{theorem}[proposition]{Theorem}
\newtheorem{corollary}[proposition]{Corollary}
\newtheorem{example}[proposition]{Example}

\newtheorem{lemma}[proposition]{Lemma}
\newtheorem{definition}[proposition]{Definition}
\newtheorem{rmk}[proposition]{\textbf{Remark}}

\theoremstyle{remark}

\usepackage{fullpage}
\begin{document}

\title[Classifying Binary Quadratic Forms using Clifford Invariants]{Classifying Binary Quadratic Forms using Clifford Invariants}

\author[S. Mondal]{Soham Mondal}
\address{Department of Mathematics, Indian Institute of Technology Madras, Chennai 600036, India}
\email{getsoham1@gmail.com}
    
\author[T. E. V. Baalaji]{T. E. Venkata Balaji}
\address{Department of Mathematics, Indian Institute of Technology Madras, Chennai 600036, India}
\email{tevbal@iitm.ac.in}

\subjclass{11E16, 11E88, 13C20, 14C22, 15A66, 16D70}
	
\keywords{Binary quadratic forms, Clifford algebras, Gauss Composition, Picard group, Quadratic algebra, Traceable module}
    
\begin{abstract}
We functorially identify similarity classes of line-bundle-valued quadratic forms on rank two vector bundles with isomorphism classes of pairs consisting of the degree zero and the degree one parts of the associated generalized Clifford algebras. As applications, we generalize the Gauss Composition and explore connections with Picard groups of quadratic algebras.
\end{abstract}

\maketitle

\section{Introduction}
In the book \cite[Chapter V, Proposition 2.4.1, \S2]{knus}, M. A. Knus proved that if $(E, q)$ and $(E', q')$ are two $R$-valued quadratic modules of rank 2 over a ring $R$, then $(E, q)$  and $(E', q')$ are isometric if and only if their full Clifford algebras $C(E, q)$ and $C(E', q')$ are isomorphic as graded algebras. Further, he proved that for the rings $R$ where Witt cancellation holds and when the two quadratic modules $(E, q)$ and $(E', q')$ are nondegenerate, $(E, q)$  and $(E', q')$ are isometric if and only if their full Clifford algebras $C(E, q)$ and $C(E', q')$ are isomorphic as $R$-algebras (though they may not be isomorphic as graded algebras) and their even Clifford algebras $C_0(E, q)$ and $C_0(E', q')$ are isomorphic as $R$-algebras. However, in the context of non-trivial line bundle-valued quadratic forms, the full generalized Clifford algebra does not, in general, possess a natural algebra structure; see \cite{bichselknus}  for a comprehensive discussion of this phenomenon. As a result, one cannot anticipate a classification up to isometries analogous to that available in the trivial line bundle-valued case. Nevertheless, the even-degree component of the full generalized Clifford algebra associated to a line bundle-valued quadratic form $(E, q, L)$, denoted $C_0(E, q, L)$, does admit a natural algebra structure, while the odd-degree component, denoted $C_1(E, q, L)$, acquires a canonical structure as a bimodule over $C_0(E, q, L)$. For a binary quadratic form $(E, q, L)$ over an arbitrary base scheme $S$, the generalized even Clifford algebra, $C_0(E, q, L)$, is a quadratic $\OS$-algebra. Our main result, which uses definitions explained later, is Theorem \ref{MainTheorem}. It can be stated as follows:

\textbf{Theorem 3.1.}   For any scheme $S$, the natural map
    \[(E, q, L)\mapsto (C_0(E, q, L), C_1(E, q, L))\]
    induces a bijective correspondence
\[
\left\{
\parbox{2.5in}{\centering Similarity classes of binary quadratic forms (E, q, L) over \( S \)}
\right\}
\longleftrightarrow
\left\{
\parbox{2.5in}{\centering Isomorphism classes of pairs \( (C, E) \), \\ with \( C \) a quadratic algebra over \( S \), \\ and \( E \) a \textit{traceable} \( C \)-module}
\right\},
\]
which is functorial in $S$, and which preserves discriminants up to sign, namely satisfies  $$
\Delta(q) = -\Delta(C_0(E, q, L)).
$$
In the above, $E$ is a traceable $C$-module means that Zariski-locally on $S$, the trace map on $C$ equals the trace map on $E$ as a left $C$-module, a notion defined and employed by M. M. Wood in \cite{Wood}.
An isomorphism of pairs $(C , E)$ and $(C', E')$ is given by an isomorphism $C \cong C'$ of $\OS$-algebras, and an isomorphism $E \cong E'$ of $\OS$-modules that respects the $C$ and $C'$ module structures. 

As an immediate consequence of Theorem \ref{MainTheorem}, we obtain the following result:

\textbf{Corollary 3.4.}
Let \( q_1: E_1 \to L_1 \) and \( q_2: E_2 \to L_2 \) be two binary quadratic forms over a scheme \( S \). If there exist:

\begin{enumerate}
    \item An isomorphism between \( C_0(E_1, q_1, L_1) \) and \( C_0(E_2, q_2, L_2) \) as \( \OS \)-algebras, and
    \item An isomorphism between \( E_1 \) and \( E_2 \) as \( \OS \)-modules,  with the \( C_0(E_1, q_1, L_1) \)-module (and \( C_0(E_2, q_2, L_2) \)-module) structures compatible with the isomorphism in (1) above,
\end{enumerate}
then \( q_1 \) and \( q_2 \) are similar. Conversely, the similarity of \( q_1 \) and \( q_2 \) implies the existence of such isomorphisms.

So in this work, we significantly extend the scope of Knus’s results by providing, in Theorem~\ref{MainTheorem}, a classification up to similarity of binary quadratic forms in the following broader contexts:

\begin{itemize}
    \item \textit{Line bundle-valued binary forms}: We extend the theory to encompass forms whose values lie in invertible sheaves (line bundles) rather than merely in the structure sheaf $ \OS $.
    
\item  \textit{Degenerate forms}: Unlike previous work that primarily focused on nondegenerate forms, we include degenerate forms in our classification, thereby addressing a broader and more general class of quadratic forms.
    
    \item  \textit{General base schemes $ S $:} Our results apply to arbitrary base schemes $ S $. 
\end{itemize}

This broader framework allows for a more flexible and geometrically meaningful treatment of binary quadratic forms, enhancing their applicability in both algebraic and geometric contexts.

After restricting the bijection in Theorem \ref{MainTheorem} to the primitive binary quadratic forms, we have the following result:\\
\textbf{Theorem 3.12.}  For any scheme $S$, the natural map
    \[(E, q, L)\mapsto (C_0(E, q, L), C_1(E, q, L))\]
    induces a bijective correspondence
\[
\left\{
\parbox{2.5in}{\centering Similarity classes of primitive binary quadratic forms (E, q, L) over \( S \)}
\right\}
\longleftrightarrow
\left\{
\parbox{2.5in}{\centering Isomorphism classes of pairs \( (C, E) \), \\ with \( C \) a quadratic algebra over \( S \), \\ and \( E \) a locally free rank 1 \( C \)-module}
\right\},
\]
which is functorial in $S$, and which preserves discriminants up to sign, namely $$
\Delta(q) = -\Delta(C_0(E, q, L)).
$$ 

While Theorem \ref{classifi1} furnishes a classification of primitive binary quadratic forms over an arbitrary base scheme $S$, building upon the foundational results of Theorem \ref{MainTheorem}, we offer here an alternative perspective that extends the theory of composition and structural analysis originally developed by Kneser \cite{KNESER} and subsequently refined by Bichsel and Knus \cite{bichselknus}. In their work, Bichsel and Knus established a classification of non-degenerate, primitive binary quadratic forms over an arbitrary ring $R$, valued in an invertible $R$-module, via the construction of the universal norm form (see \cite[Example 4.2]{bichselknus}). In this paper, we broaden and generalize their framework in several significant respects. In particular, we consider: binary quadratic forms taking values in arbitrary line bundles over a general base scheme $S$, which may not necessarily be non-degenerate.
% \begin{enumerate}
%     \item A general base scheme $S$, rather than an affine base;
%     \item Binary quadratic forms taking values in arbitrary line bundles over $S$; and
%     \item Quadratic forms that are not necessarily non-degenerate—that is, we allow for possibly degenerate forms.
% \end{enumerate}
Thus, we generalize the results of Knus \cite{knus} and Bichsel-Knus \cite{bichselknus}.

As an application of Theorem \ref{MainTheorem}, we obtain the following structural result, which gives a deep connection between quadratic algebras and the theory of Clifford algebras associated with binary forms:\\
\textbf{Theorem 5.10.}  Given a quadratic $\OS$-algebra $C$ over an arbitrary base scheme $S$, there exists a binary quadratic form with values in a suitable line bundle, whose generalized even Clifford algebra is isomorphic to $C$.\\
In light of the proof of Theorem \ref{existence of form}, a natural question emerges: Given a quadratic algebra $ C $ over the base scheme $ S $, how can one systematically parameterize its Picard group in terms of quadratic forms? Building upon our principal theorem, we were able to demonstrate the following:

\textbf{Theorem 5.11.} Let $ S $ be a general scheme and  $ C $ be a quadratic algebra over $ S $. The natural map 
\[(E, q, L)\rightarrow C_1(E, q, L)\]
induces
\[
\left\{
\begin{array}{c}
\text{Similarity classes of primitive binary quadratic forms (E, q, L) over } S \\
\text{having even Clifford algebra $C_0(E, q, L)$ isomorphic to } C
\end{array}
\right\}
\longleftrightarrow \mathrm{Pic}(C) / \sim,
\]
where for $ E, E' \in \mathrm{Pic}(C) $,  $ E \sim E' $ if and only if there exists an automorphism $ \varphi $ of $ C $ over $ S $ such that $ E' \cong \varphi^*E $ as $ C $-modules. Here, $ \varphi^*E $ denotes the $ C $-module $ E $ whose structure is locally given by $ \alpha \cdot e = \varphi(\alpha)e $, and  $\mathrm{Pic}(C) / \sim$ denotes the set of $\sim$-equivalence classes in $\mathrm{Pic}(C)$.

Theorem \ref{Quotient Picard}, provides an approximation to the parametrization of the Picard group of $C$, with the primary obstruction arising from the existence of nontrivial automorphisms of $C$. To resolve this obstruction, we introduce a rigidification of quadratic algebras by eliminating all nontrivial automorphisms via the notion of an \emph{orientation}. By an \emph{$ N $-twisted binary form}, we mean a binary form taking values in a specified line bundle associated with a fixed invertible sheaf determined by $ N $. This refinement leads to the following result:

\textbf{Theorem 5.17.} Let $S$ be a general scheme such that $2$ is not a zero divisor in $\Gamma(S, \OS)$. Let $C$ be a quadratic algebra over $S$. There exists a set-theoretic bijection between:
\[
\left\{
\begin{array}{c}
\text{Similarity classes of primitive } N\text{-twisted binary quadratic forms over } S, \\
\text{whose even Clifford algebra is isomorphic to } C, \\
\text{and which preserve a fixed } N\text{-orientation}
\end{array}
\right\}
\longleftrightarrow{ \operatorname{Pic}(C)}.
\]

In 2011, Wood, in her paper \cite[Theorem 1.4]{Wood}, established a functorial bijection between the similarity classes of \textit{linear} binary quadratic forms over $ S $ and the isomorphism classes of pairs $ (C, E) $, where $ C $ is a quadratic algebra and $ E $ is a \textit{traceable} $ C $-module. However, it is important to draw a crucial distinction between Wood's definition of \textit{linear} binary quadratic forms and the classical notion of binary quadratic forms used above and as formulated for e.g., by Kneser \cite{KNESER}. Specifically, Wood's \textit{linear} binary quadratic forms on rank 2 vector bundles correspond precisely to the classical binary quadratic forms defined on the \textit{dual} vector bundles. For further details, see \cite[Proposition 6.1]{Wood}. This correspondence highlights a subtle but essential connection between these two perspectives.

The results of derived categories are employed in the proof of Theorem 1.4 in \cite{Wood}, where they play a crucial role in the construction of the associated algebra and module structures via hypercohomology. Specifically, the complexes arising from the \textit{linear} binary quadratic form are treated within the derived category of sheaves on the base scheme $S$, allowing for a coherent and base-change-resilient definition of the associated quadratic algebra and traceable module. A detailed exposition of this approach, including the necessary background on derived categories and their application to the moduli problems at hand, can be found in \cite[\S3 and Appendix A]{Wood}. Our focus diverges not only in the fact that we replace Wood's \textit{linear} binary quadratic forms with classical binary quadratic forms, but we also in our aim to address the classification problem using the lens of Clifford algebras, which provide a rich algebraic structure encoding the intrinsic properties of classical quadratic forms.

Given a binary quadratic form $ q \colon E \to L $ in the classical sense, there is \emph{a priori}, no canonical method to construct a binary quadratic form on the dual module $ E^\vee $ within the same framework. However, by employing Wood's construction together with the main result of this paper, Theorem~\ref{MainTheorem}, we are able to establish such a construction in a natural and well-defined manner.

Using the  bijection provided in \cite[Proposition 6.1]{Wood}  and the correspondence of Theorem \ref{MainTheorem} together with Wood's correspondence \cite{Wood}, we establish the following:

\textbf{Theorem 5.4.} Let  $E$ be locally free rank 2, $L$ be locally free rank 1 over a general scheme $S$. Then we have
    \begin{itemize}
        \item  A duality between classical binary quadratic forms on $E$ and $E^\vee$, with values in $L$;
        \item A duality between Wood's \textit{linear} binary quadratic forms on $E$ and $E^\vee$, with values in $L$;
        \item Involutions on the space of Clifford pairs and on the space of Wood's pairs.
    \end{itemize}

Projective duality is a fundamental concept in projective geometry that establishes a symmetry between points and lines (or more generally, geometric objects of complementary dimensions). Now, over an algebraically closed field $ k $ with $\mathrm{char}(k) \neq 2$, the algebraic duality between classical and Wood forms corresponds to this geometric duality:

 \begin{enumerate}
     \item Classical binary quadratic forms define conics.
    \item Wood's \textit{linear} binary quadratic forms define their dual conics.

 \end{enumerate}
     
Thus:
\begin{quote}
    ``Over algebraically closed fields of characteristic not 2, algebraic duality implies geometric duality, and vice versa.''
\end{quote}

In his paper \cite[Theorem 3.24]{dallaporta}, William Dallaporta provided a parametrization of the Picard group associated with a given quadratic algebra $ C $. However, his approach relies on constructions developed in Wood's work \cite{Wood}, which uses \textit{linear} binary quadratic forms and moreover does not incorporate the Clifford-theoretic framework.  Consequently, Dallaporta's parametrization is rooted in \textit{linear} binary quadratic forms (see  \cite[Definition 3.12]{dallaporta}), which differ substantially from our definition of classical binary quadratic forms. Furthermore, his methodology lacks the  Clifford-theoretic perspective, which is central to our analytical framework. This absence highlights a profound divergence in both conceptual underpinnings and technical methodology.

The composition of binary quadratic forms is a long-established topic in number theory. Since Gauss’s seminal work in his \emph{Disquisitiones Arithmeticae}~\cite{Gauss}, there have been numerous efforts to simplify and generalize this concept. Martin Kneser, in his influential work~\cite{KNESER}, introduced the notion of quadratic modules \(E\) as modules over their even Clifford algebra \(C_0(E)\).  Kneser's key insight is there to interpret composition as tensor product over $C_0(E)$. This provides a more elegant and general framework than previous approaches. Notably, Kneser succeeded in establishing this composition law for affine schemes without imposing any conditions on the characteristic of the base ring. Kneser works with the case where the even Clifford algebras are identical. Specifically, in Section 6 of his article, he examines the case where $C_1 = C_2 = C$ and the isomorphisms $\phi_1$, $\phi_2$ are the identity map. In this case, the composition map is called ``of type $C$". In \cite[Theorem 3]{KNESER}, Kneser shows that the isomorphism classes of primitive binary quadratic forms of type $C$ with composition of type $C$ form an abelian group $G(C)$. Furthermore, Kneser establishes an isomorphism between the groups $H(C)$ and $\mathrm{Pic}(C)$, where the group $H(C)$ is defined using composition of type $C$. For a detailed account of this construction, see \cite[Proposition 2, \S6]{KNESER}. In \cite{KNESER}, Kneser considered the isomorphism classes of quadratic modules of type $C$, whereas we used similarity classes of binary quadratic forms. In this article, we consider a more general composition law. We consider the composition of two primitive binary quadratic forms having only isomorphic even Clifford algebras; in particular, the isomorphisms may not be equal to the identity map. Our approach generalizes Kneser's construction in two significant directions:
\begin{itemize} 
\item by explicitly incorporating isomorphisms between types, and
\item by allowing for arbitrary base schemes.
\end{itemize}
Although we retain the notation $ H(C) $ for the resulting structure, its meaning is now broader. It encompasses compositions involving non-identity isomorphisms between types and is defined over arbitrary base schemes.

In her work \cite{Wood}, Wood established a set-theoretic bijection—restricted to the primitive case—between linear binary quadratic forms and a disjoint union of quotient sets of Picard groups. Motivated primarily by moduli-theoretic considerations, she did not formulate a group law nor did she identify an explicit isomorphism with any particular Picard group. Subsequently, Dallaporta \cite{dallaporta} introduced a group structure, but again only within the context of linear binary quadratic forms. Both Wood’s and Dallaporta’s works consider \textit{linear} binary quadratic forms, whereas we are considering quadratic forms in the classical sense. While their frameworks operate entirely outside the Clifford algebra perspective, the principal innovation of our approach lies in systematically incorporating the classical Clifford-theoretic viewpoint, which we show also allows for an equally intrinsic and algebraically robust formulation of the theory. Theorem \ref{thegausscomp} gives the composition law on similarity classes of primitive binary quadratic forms taking values in line bundle over an arbitrary base scheme $S$. 

As an example of the advantages of our approach, we get the direct sum $C\oplus E$ corresponding to a Wood's pair (respectively, Clifford pairs) admits a natural quaternion algebra structure when the associated binary quadratic form $(E, q, L)$ satisfies $L\cong \OS$ or equivalently, when $C/\OS \cong \wedge^2E$.

The layout of this paper is as follows. Section~2  recalls the necessary definitions,
notations, and results, after which the main theorem is proved in Section~3.
Section~4 presents an alternative approach to the classification of primitive binary
quadratic forms, and Section~5 discusses applications of the main theorem.

\paragraph{\textbf{Acknowledgements.}}
 Soham Mondal extends his heartfelt gratitude to the University Grants Commission (UGC) for awarding him the prestigious UGC NET-JRF fellowship. This fellowship, under his student ID 423142, has been instrumental in enabling him to dedicate himself fully to this research without the burden of financial constraints. He is equally grateful to the following esteemed faculty members of the Department of Mathematics, IIT Madras, for their invaluable financial support through their research projects:
\begin{itemize}
    \item Prof. Barun Sarkar 
    \item Prof. Arijit Dey 
    \item Prof. Kunal Krishna Mukherjee 
\end{itemize}

\section{Preliminaries}
In this section, we recollect the definitions, notations, and results needed in this paper.

\begin{definition}[\textbf{Bilinear Forms and Symmetric Bilinear Forms}] \label{def:bilinear-forms}
Let $ S $ be a general scheme, $ L $ a line bundle on $ S $, and $ E $ a vector bundle on $ S $.

\begin{enumerate}
    \item An \textbf{$ L $-valued bilinear form} on $ S $ is a triple $ (E, b, L) $, where $ b : E \times E \to L $ is a bilinear $ \OS $-module morphism. The morphism $ b $ assigns to every pair of local sections $ s_1, s_2 \in \Gamma(U, E) $ over an open subset $ U \subseteq S $, a section $ b(s_1, s_2) \in \Gamma(U, L) $, satisfying the bilinearity condition in the first argument:
    \[
    b(f s_1 + g s_2, t) = f b(s_1, t) + g b(s_2, t), \quad \text{for all } f, g \in \mathscr{O}_S(U),
    \]
    and similarly in the second argument.
    \item An $ L $-valued bilinear form $ (E, b, L) $ is called \textbf{symmetric} if $ b $ is invariant under the naive switch morphism $ \sigma : E \times E \to E \times E $, defined by $ \sigma(e_1, e_2) = (e_2, e_1) $. Explicitly, $ b $ satisfies:
    \[
    b(e_1, e_2) = b(e_2, e_1),
    \]
    for all local sections $ e_1, e_2 \in \Gamma(U, E) $. Equivalently, $ b \circ \sigma = b $.
\end{enumerate}
\end{definition}
This definition generalizes the classical notion of bilinear forms and symmetric bilinear forms to the context of schemes and vector bundles, providing a framework for studying such structures in algebraic geometry.

\begin{definition}[\textbf{Quadratic Form}]\label{quad form}
    An ($L$-valued) quadratic form on $S$ is a triple $(E, q, L)$, where $E$ is a vector bundle and $q:E\rightarrow L$ is a map of sheaves satisfying the following conditions:
\begin{itemize}
    \item The following diagram of maps of sheaves is commutative,

\[\begin{tikzcd}[row sep = large, column sep = large]
 \OS \otimes E \arrow[d, "(-)^2\otimes q"'] \arrow[r, "\simeq"] & E \arrow[d, "q"] \\
 \OS \otimes L \arrow[ur, phantom, "\scalebox{1.5}{$\circlearrowleft$}" description]\arrow[r, "\simeq"] & L
 \end{tikzcd}\]
 equivalently, on sections over $U\subseteq S$, we have
 \[q(av)=a^2q(v), ~ \forall ~ a \in \Gamma(U, \OS), v\in \Gamma(U, E).\]
 \item The corresponding polar form $b_q: E \times E \rightarrow L$, defined on sections over $U\subseteq S$ by
 \[b_q(v, w)=q(v+w)-q(v)-q(w), ~ \forall ~v,w \in \Gamma(U, E),\]
 is an $L$-valued bilinear form on $S$.
 \end{itemize}
 \end{definition}

    Let $S$ be an arbitrary scheme. Fix vector bundles $E$ and $L$ on $S$. Denote by $\mathrm{Sym}_2E$, the submodule of symmetric tensor squares of $E$. We give two equivalent notions of a quadratic form on $E$ with values in the line bundle $L$.
    \bigskip
\begin{lemma}\cite[Lemma 1.1]{Auel_2014}\label{equivalent def}
The following sets of objects are in natural bijection:
\begin{enumerate}
    \item Morphisms of sheaves \( q : E \to L \) satisfying:
    \[
    q(av) = a^2 q(v),
    \]
    for sections \( a \) of \( \OS \) and \( v \) of \( E \); and such that the morphism of sheaves 
    \[
 b_q : E \times E \to L, 
    \]
defined by
    \[
    b_q(v, w) = q(v + w) - q(v) - q(w),
    \]
    for sections \( v \) and \( w \) of \( E \), is \( \OS \)-bilinear.

    \item Morphisms of \( \OS \)-modules \( \mathrm{Sym}_2E \to L \).

%\item Global sections \( \Gamma(S, Sym^2(E^\vee) \otimes L) \).
\end{enumerate}
\end{lemma}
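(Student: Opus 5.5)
We construct maps in both directions, check that they are mutually inverse, and reduce everything to a local computation: both sides are sheaf-theoretic, so it suffices to work over an affine open $U=\Spec R$ on which $E$ is free, and then glue. Here $\mathrm{Sym}_2E$ denotes the submodule of symmetric tensors in $E\otimes E$, i.e.\ the invariants of the switch $\sigma$. It is locally generated by the elements $v\otimes v$ together with $v\otimes w+w\otimes v$. If $e_1,\dots,e_n$ is a local basis, then $\mathrm{Sym}_2E$ is free on $e_i\otimes e_i$ and $e_i\otimes e_j+e_j\otimes e_i$ for $i<j$. This presentation is compatible with restriction and with base change of bases, so the local constructions will glue.

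\textbf{From (2) to (1).} Given $\varphi:\mathrm{Sym}_2E\to L$, set $q(v)=\varphi(v\otimes v)$. Homogeneity is immediate: $q(av)=\varphi(a^2\,v\otimes v)=a^2q(v)$. Expanding the tensor square gives
\[
b_q(v,w)=\varphi(v\otimes w+w\otimes v),
\]
which is visibly $\OS$-bilinear.

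\textbf{From (1) to (2).} Given $q$, I define $\varphi$ on the local basis by
\[
\varphi(e_i\otimes e_i)=q(e_i),\qquad \varphi(e_i\otimes e_j+e_j\otimes e_i)=b_q(e_i,e_j)\quad (i<j),
\]
and extend linearly. For a general section $v=\sum a_ie_i$, homogeneity and bilinearity of $b_q$ give the standard expansion
\[
q(v)=\sum_i a_i^2 q(e_i)+\sum_{i<j}a_ia_j\, b_q(e_i,e_j).
\]
Since $v\otimes v=\sum_i a_i^2\, e_i\otimes e_i+\sum_{i<j}a_ia_j(e_i\otimes e_j+e_j\otimes e_i)$, this shows $\varphi(v\otimes v)=q(v)$. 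In particular, $\varphi$ is characterized by the basis-free condition $\varphi(v\otimes v)=q(v)$. Because the elements $v\otimes v$ generate $\mathrm{Sym}_2E$ only up to the mixed terms, I also record that $\varphi(v\otimes w+w\otimes v)=b_q(v,w)$ for all $v,w$, which follows from the definition of $b_q$. These two identities pin $\varphi$ down on a generating set. Hence $\varphi$ is independent of the chosen basis, and the local maps agree on overlaps and glue to a global morphism. They also show that the two constructions are mutually inverse: starting from $\varphi$, the associated $q$ returns $\varphi$ on the generators; starting from $q$, the associated $\varphi$ returns $q$ by the expansion above.

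\textbf{Main obstacle.} The delicate point is well-definedness without assuming $2$ is invertible. Over such a base, $\mathrm{Sym}_2E$ is not spanned by the squares $v\otimes v$ alone, so one cannot simply define $\varphi$ on squares. This is why I work with the explicit free basis above, which includes the mixed generators $e_i\otimes e_j+e_j\otimes e_i$. Basis independence then rests on the identity $\varphi(v\otimes w+w\otimes v)=b_q(v,w)$. That identity itself holds because both sides are bilinear in $(v,w)$ and agree on pairs of basis vectors; for $i=j$, both give $2q(e_i)$.
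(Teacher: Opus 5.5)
Your argument is correct. The paper gives no proof of its own: it imports the statement from Auel--Bernardara--Bolognesi. Your local-basis verification is therefore a reasonable self-contained substitute, checking by hand that $v\mapsto v\otimes v$ is the universal quadratic map out of a locally free $E$. The key step is that the linearly extended $\varphi$ satisfies $\varphi(v\otimes v)=q(v)$ for every local section $v$. This follows from the expansion $q(\sum a_ie_i)=\sum a_i^2q(e_i)+\sum_{i<j}a_ia_j\,b_q(e_i,e_j)$. Basis independence, gluing and mutual inverseness then all follow because the squares span $\mathrm{Sym}_2E$.

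However, your diagnosis in the ``main obstacle'' paragraph is wrong. It also contradicts your own correct sentence that $\varphi$ is characterized by $\varphi(v\otimes v)=q(v)$. The squares do span $\mathrm{Sym}_2E$ over any base, by polarization:
\[
v\otimes w+w\otimes v=(v+w)\otimes(v+w)-v\otimes v-w\otimes w .
\]
This is also how the paper describes $\mathrm{Sym}_2E$ in the proof of Theorem~\ref{MainTheorem}. What fails when $2$ is not invertible is the opposite statement: the symmetrized tensors $v\otimes w+w\otimes v$ alone do not span. The coefficient of $e_i\otimes e_i$ in any combination of them lies in $2\OS$. This reflects the fact that $q$ cannot be recovered from $b_q$.

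The genuine subtlety in defining $\varphi$ ``on squares'' is different: the squares are far from linearly independent. An assignment $v\otimes v\mapsto q(v)$ must therefore be checked against all linear relations among them. Your choice of a basis, together with the expansion of $q$, is exactly what handles this. The extra generators $v\otimes w+w\otimes v$ in your ``pin down'' step are redundant but harmless, so the proof stands once that remark is corrected.
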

\vspace{.05in}
\begin{definition}[\textbf{Isometry of Quadratic Forms}]
    An isometry of ($L$-valued) quadratic forms $\varphi : (E, q, L) \simeq (E', q', L)$ is an $\OS$-module isomorphism $\varphi :E \rightarrow E'$ such that the following diagram of maps of $\OS$-modules commutes:

\[\begin{tikzcd}[row sep = large, column sep = large]
  E \arrow[d, "\varphi"] \arrow[r, "q"] & L \arrow[d, "Id"] \\
 E' \arrow[ur, phantom, "\scalebox{1.5}{$\circlearrowleft$}" description]\arrow[r, "q'"] & L
 \end{tikzcd}\]
 
 \end{definition}
    \vspace{.09in}
\begin{definition}[\textbf{Similarity}] 
A similarity or similitude between two quadratic forms $(E, q, L)$ and $(E', q', L')$ is a pair $(\varphi, \mu_\varphi)$ of $\OS$-module isomorphisms $\varphi :E \rightarrow E'$ and $\mu_\varphi :L \rightarrow L'$ such that the following diagram of $\OS$-modules commutes:
\[\begin{tikzcd}[row sep = large, column sep = large]
  E \arrow[d, "\varphi"] \arrow[r, "q"] & L \arrow[d, "\mu_\varphi"] \\
 E' \arrow[ur, phantom, "\scalebox{1.5}{$\circlearrowleft$}" description]\arrow[r, "q'"] & L'
 \end{tikzcd}\]
 
\end{definition}

\vspace{.05in}

\begin{definition}[\textbf{Discriminant of a Quadratic Form}]
Let $ S $ be a scheme, let $ E $ be a locally free $ \OS $-module of rank $ n $, and let $ L $ be a line bundle on $ S $. Let $q$ be a  \emph{quadratic form} on $ E $ with values in $ L $, namely a morphism of sheaves $ q \colon E \to L $ such that  
$$
q(av) = a^2 q(v)
$$  
for all local sections $ a \in \OS $, $ v \in E $, together with an associated symmetric bilinear form $ b_q \colon E \times E \to L $.

Locally, on an affine Zariski open cover $ \{U_i\} $ of $S$ where $ E|_{U_i} \cong \mathscr{O}_{U_i}^{\oplus n} $ and $ L|_{U_i} \cong \mathscr{O}_{U_i} $, the quadratic form is given by  
$$
q\left( \sum x_k e^{(i)}_k \right) = \sum_{k \le l} b^{(i)}_{kl} x_k x_l,
$$  
with $ b^{(i)}_{kl} \in \OS(U_i) $, and $ b_q $ has matrix $ (B^{(i)}_{kl}) $, where $ B^{(i)}_{kk} = 2b^{(i)}_{kk} $ and $ B^{(i)}_{kl} = b^{(i)}_{kl} $ for $ k \ne l $.

Then the \emph{discriminant} of $ q $ is the global section  
$$
\Delta(q) \in H^0(S, L^{\otimes n} \otimes (\det E)^{\otimes -2})
$$  
defined locally by $ \Delta_i = \det(B^{(i)}_{kl}) \in \OS(U_i) $.  For  details, see \cite{sohamthesis}.
\end{definition}
\vspace{.05in}
\begin{definition}[\textbf{Quadratic Algebra}]
A quadratic $\OS$-algebra is a sheaf $C$ of $\OS$-algebras that is locally free of rank 2 as a sheaf of $\OS$-modules i.e., there is a basis of open sets $U$ of $S$ such that $C(U)$ is a free $\OS(U)$-module.
\end{definition}

\vspace{.05in}
\begin{example}
    If \( L/K \) is a quadratic field extension, then the structure sheaf of \( \operatorname{Spec}(L) \) is a quadratic algebra over \( \operatorname{Spec}(K) \).

\end{example}
\vspace{.05in}

\begin{lemma}\cite[Proposition 2.4]{dallaporta}\label{freedirectsummand}
Let $C$ be a quadratic algebra over $S$. Then $C/ \OS$ is a locally free $\OS$-module of rank $1$. Moreover, if $C$ is free as an $\OS$-module, then there exists ${\tau \in \Gamma(S,C)}$ such that ${C= \OS \oplus \tau \OS}$ as $\OS$-modules. In particular, ${C/\OS = \tau \OS}$ is free.
\end{lemma}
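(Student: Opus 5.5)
The plan is to reduce everything to a statement about the unit section $1:\OS\to C$, which is locally split, and then read off the free case directly.

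First, I will show that $1:\OS\to C$ is injective with locally free cokernel of rank $1$. This is local on $S$, so I may work on an affine open $U=\Spec R$ where $C(U)$ is free of rank $2$ over $R$. Write $A=C(U)$. I claim the unit $1\in A$ is unimodular, meaning it is part of a basis of $A$. Equivalently, its image in $A\otimes_R k(\mathfrak p)$ is nonzero for every prime $\mathfrak p$. This holds because $A\otimes k(\mathfrak p)$ is a $2$-dimensional $k(\mathfrak p)$-algebra, hence a nonzero ring, so its unit is nonzero.

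Next I pass from unimodularity to a splitting. A unimodular element of a free module of finite rank extends locally to a basis: by Nakayama, at each prime one can choose $\tau$ so that $1,\tau$ reduce to a basis of the fibre, and then $1,\tau$ form a basis of $A_{\mathfrak p}$, hence of $A_f$ for some $f\notin\mathfrak p$. So on a cover, $C|_V=\mathscr O_V\cdot 1\oplus\mathscr O_V\tau$. This gives both statements at once: $\OS\to C$ is injective, and $C/\OS$ is locally free of rank $1$ with local generator the image of $\tau$.

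For the free case, suppose $C\cong\OS^2$ globally. Then $C/\OS$ is locally free of rank $1$ and the sequence $0\to\OS\to C\to C/\OS\to0$ is exact. Taking determinants gives $\det C\cong\OS\otimes C/\OS=C/\OS$. Since $C$ is free, $\det C\cong\OS$, so $C/\OS$ is free, generated by a global section $\bar\tau$.

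It remains to lift $\bar\tau$ to a global section $\tau\in\Gamma(S,C)$. I expect this to be the main obstacle, since surjectivity of $\Gamma(S,C)\to\Gamma(S,C/\OS)$ is not automatic: the obstruction lies in $H^1(S,\OS)$. I will handle it by noting that, because $C/\OS$ is free of rank $1$, it is projective, so the sequence splits; equivalently, $\operatorname{Ext}^1(\OS,\OS)=H^1(S,\OS)$ classifies the extension and we must show the class vanishes. More directly, I will use a global basis $e_1,e_2$ of $C$ and write $1=a e_1+b e_2$ with $a,b\in\Gamma(S,\OS)$ generating the unit ideal locally, by the unimodularity established above. I then define $\tau=-b'e_1+a'e_2$ where $aa'+bb'=1$ globally. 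The coefficients $a',b'$ exist because $(a,b):\OS^2\to\OS$ is a surjection of sheaves onto the free module $\OS$, hence split by a global section.

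With this choice, the matrix with columns $(a,b)$ and $(-b',a')$ has determinant $1$, so $\{1,\tau\}$ is a global basis. This gives $C=\OS\oplus\tau\OS$, and $C/\OS=\tau\OS$ is free.
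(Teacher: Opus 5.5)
\textbf{There is a genuine gap in the free case.} Your local argument is fine. The fibre of $C$ at any point is a nonzero $2$-dimensional algebra, so $1$ is unimodular in every stalk, $\OS\to C$ is locally split, and $C/\OS$ is invertible. In the free case, $C/\OS\cong\det C\cong\OS$ is also correct.

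The problem is the lifting step, which is the whole content of the ``moreover'' clause. You claim that $C/\OS$, being free, is projective, so that $0\to\OS\to C\to C/\OS\to 0$ splits. This is false on a non-affine scheme. The functor $\operatorname{Hom}(\OS,-)=\Gamma(S,-)$ is not exact, and $\operatorname{Ext}^1(\OS,\OS)=H^1(S,\OS)$ is typically nonzero, e.g.\ for an elliptic curve.

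Likewise, a surjection of sheaves $(a,b)\colon\OS^2\to\OS$ need not have a global section. On $S=\mathbb{A}^2_k\setminus\{0\}$ the pair $(x,y)$ is unimodular at every point of $S$, yet $\Gamma(S,\OS)=k[x,y]$ and $1\notin(x,y)$. So knowing only that $1=ae_1+be_2$ with $(a,b)$ locally unimodular cannot produce global $a',b'$. Your argument never uses the multiplication of $C$ globally, and without it the conclusion fails for a general locally unimodular section of $\OS^2$.

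\textbf{The missing idea:} when $C\cong\OS^2$, the algebra structure is defined over $A=\Gamma(S,\OS)$. The products $e_ie_j$ are global sections of $C$, hence $A$-linear combinations of $e_1,e_2$. Write $e_1^2=\alpha e_1+\beta e_2$ and $e_2e_1=\gamma e_1+\delta e_2$ with $\alpha,\beta,\gamma,\delta\in A$. Then
\[
e_1=1\cdot e_1=a\,e_1^2+b\,e_2e_1=(a\alpha+b\gamma)e_1+(a\beta+b\delta)e_2 ,
\]
and comparing $e_1$-coefficients gives $a\alpha+b\gamma=1$ in $A$.

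Equivalently, run your own fibre argument at the primes of the ring $A$, using the rank-$2$ $A$-algebra $\Gamma(S,C)=Ae_1\oplus Ae_2$, rather than at the points of $S$. With $a'=\alpha$ and $b'=\gamma$, your determinant-one basis $\{1,\tau\}$ then works exactly as you wrote it.
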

\vspace{.05in}

\begin{rmk}[Failure of Global Splitting for Sheaves over Schemes]
\label{remark:failure_of_splitting}
If $ R $ is a commutative ring and $ C $ is a quadratic $ R $-algebra, it is a classical result that the module $ C $ decomposes as
$$
C \cong R \oplus (C/R)
$$
as $ R $-modules; see \cite[Lemma 1.3.5]{knus} and \cite[Lemma 1.3]{Lowrank}. This decomposition reflects the fact that, in the affine setting, the canonical inclusion $ R \hookrightarrow C $ admits a retraction, thereby ensuring that the quotient map $ C \twoheadrightarrow C/R $ splits.

However, this result does not generalize to the setting of schemes, as illustrated by the examples in \cite{Bhatt_2012} and \cite[Example~2.10]{sohamthesis}. Nevertheless, global splitting holds when the base scheme $ S $ satisfies $ \frac{1}{2} \in \Gamma(S, \mathscr{O}_S) $.

\end{rmk}
\vspace{.05in}
\begin{lemma}\cite[Lemma 2.11]{sohamthesis} \label{directsummand}
 Let $S$ be a scheme such that $\frac{1}{2}\in \Gamma (S, \OS)$ then $\OS \cdot 1_C \hookrightarrow C$ is an $\OS$-direct summand of $C$.
 \end{lemma}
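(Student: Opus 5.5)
To prove the lemma, I would construct a global $\OS$-linear retraction $r\colon C\to \OS$ of the unit map $\iota\colon \OS\to C$, $a\mapsto a\cdot 1_C$, with $r\circ\iota=\mathrm{id}_{\OS}$. This gives $C=\iota(\OS)\oplus\ker r$. The natural candidate is $r=\tfrac12\tr_{C/\OS}$, where $\tr_{C/\OS}\colon C\to\OS$ is the trace of left multiplication. Because $C$ is locally free of rank $2$, the trace is defined on each open $U$ over which $C|_U$ is free, by taking the trace of the matrix of multiplication by a section in a chosen basis. The trace is independent of the basis, so these local maps glue to a global $\OS$-linear morphism $\tr\colon C\to\OS$. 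This global definition is the key point, because it avoids any choice of a local complement.

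Next I compute $\tr(1_C)$. Multiplication by $1_C$ is the identity on a rank $2$ free module, so $\tr(1_C)=2$, and more generally $\tr(a\cdot 1_C)=2a$ for local sections $a$ of $\OS$. Since $\tfrac12\in\Gamma(S,\OS)$, multiplication by $\tfrac12$ is a global $\OS$-linear endomorphism of $\OS$. Setting $r=\tfrac12\tr$ then gives $r(\iota(a))=a$, so $r\circ\iota=\mathrm{id}$.

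It follows that $\iota$ is injective, that $C\cong \OS\cdot 1_C\oplus \ker r$ as $\OS$-modules, and that $\ker r\cong C/\OS$, which is locally free of rank $1$ by Lemma \ref{freedirectsummand}.

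The only step needing care is the well-definedness and gluing of the trace, that is, its invariance under change of local basis. This is standard linear algebra, since the trace of a matrix is invariant under conjugation. There is no real obstacle. The hypothesis $\tfrac12\in\Gamma(S,\OS)$ is used exactly to invert the factor $\tr(1_C)=2$, which is consistent with Remark \ref{remark:failure_of_splitting}.
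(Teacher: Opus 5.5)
Your proposal is correct and complete: $r=\tfrac12\tr$ is a globally defined $\OS$-linear retraction of the unit map. The hypothesis $\tfrac12\in\Gamma(S,\OS)$ is used exactly where it is needed, to invert $\tr(1_C)=2$.

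The paper gives no proof of this lemma; it only cites \cite[Lemma 2.11]{sohamthesis}, so there is no argument here to compare against. Two remarks:
\begin{itemize}
\item Your retraction agrees with $c\mapsto\tfrac12(c+\bar c)$ for the standard involution recalled later in the paper.
\item Your splitting has a canonical complement, the trace-zero subsheaf $\ker(\tr)$, which is why no choice of local complements has to be glued.
\end{itemize}
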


    \vspace{.05in}

\begin{definition}[\textbf{Discriminant of a Quadratic Algebra}]\label{discalg}
The \textbf{discriminant} of a quadratic algebra $C$ is defined to be the pair:
$$
(\Delta(C), N)
$$
where:
\begin{itemize}
    \item $N = (C/\OS)^\vee$, a line bundle (invertible sheaf) on $S$,
    \item $\Delta(C) \in H^0(S, N^{\otimes 2})$ is a global section of $N^{\otimes 2}$, constructed locally as follows.
\end{itemize}
\end{definition}
\paragraph*{{\textnormal{\textit{Local Construction.}}}}
Let $U \subseteq S$ be an open subset where $C|_U$ is free of rank 2 as an $\OU$-module. Choose a basis $\{1, \tau\}$, so that:
$$
\mathcal{C}|_U \simeq \OU[\tau]/(\tau^2 + r\tau + s), \quad \text{with } r,s \in \Gamma(U, \OU)
$$

Then:
\begin{itemize}
    \item  Let $\overline{\tau} \in (C/\OS)|_U$ be the image of $\tau$,
    \item Let $\overline{\tau}^\vee \in N(U)$ be the dual basis element corresponding to $\tau$,
    \item  Then
    $$
    \Delta(C)|_U := (r^2 - 4s)(\overline{\tau}^\vee \otimes \overline{\tau}^\vee) \in N^{\otimes 2}(U)
    $$

\end{itemize}

Let $ S $ be a scheme and let $ C $ be a quadratic algebra over $ S $, i.e., a sheaf of commutative $ \OS $-algebras which is locally free of rank 2.  The local definition of the discriminant :
$$
\Delta(C)|_U := (r^2 - 4s)(\overline{\tau}^\vee \otimes \overline{\tau}^\vee) 
$$
glues across overlaps, i.e., it defines a well-defined global section of $ ((C/\OS)^\vee)^{\otimes 2} $. We say that two discriminants \((\Delta, N)\) and \((\Delta', N')\) are \emph{isomorphic} if there exists an isomorphism of \(\mathscr{O}_S\)-modules
\[
f : N \xrightarrow{\cong} N'
\]
such that
\[
\Delta' = f^{\otimes 2}(\Delta),
\]
where \(f^{\otimes 2} : N^{\otimes 2} \xrightarrow{\cong} (N')^{\otimes 2}\) is the induced isomorphism on the tensor squares.
For a more detailed discussion, see \cite{sohamthesis}.

\vspace{.05in}
\begin{definition}[\textbf{Generalized Even Clifford algebra}]
\upshape
    In his dissertation, Bichsel \cite{bichsel1985quadratische} introduced the construction of the even Clifford algebra associated with a quadratic form taking values in a line bundle over an affine scheme. Subsequent alternative approaches were developed by Bichsel–Knus \cite{bichselknus}, Caenepeel–van Oystaeyen \cite{caenepeel}, and Parimala–Sridharan \cite{parimalareduced}. We now recall the direct tensorial construction as presented by Asher Auel \cite{auel2015surjectivity}.  
    
    Let $(E, q, L)$ be a (line bundle-valued) quadratic form of rank $n$ over a scheme $S$, where $n = 2m$ or $n = 2m + 1$. Consider the tensor algebra $T(E \otimes E \otimes L^\vee)$, and denote by $J_1^{C_0}$ and $J_2^{C_0}$ the ideals generated respectively by elements of the form  
$$
v \otimes v \otimes f - f(q(v)) \cdot 1,
$$
and  
$$
u \otimes v \otimes f \otimes v \otimes w \otimes g - f(q(v)) \, u \otimes w \otimes g,
$$
for local sections $u, v, w$ of $E$ and $f, g$ of $L^\vee$.

In Asher Auel’s work \cite{auel2015surjectivity},  the generalized even Clifford algebra of $(E, q, L)$ is introduced as the quotient  
$$
C_0(E, q, L) = T(E \otimes E \otimes L^\vee) / (J_1^{C_0} + J_2^{C_0}).
$$

There exists a canonical $\OS$-module morphism  
$$
i_{C_0} \colon E \otimes E \otimes L^\vee \to C_0(E, q, L),
$$  
obtained by composing the natural inclusion  
$$
\theta \colon E \otimes E \otimes L^\vee \hookrightarrow T(E \otimes E \otimes L^\vee)
$$  
with the projection  
$$
p \colon T(E \otimes E \otimes L^\vee) \twoheadrightarrow C_0(E, q, L),
$$  
factoring through the degree-one component of the tensor algebra.
\end{definition}

\vspace{.05in}
\begin{proposition}[Universal Property of the generalized even Clifford algebra]\cite[Proposition 1.1]{auel2015surjectivity}\label{universalC_0} 
Given an $\OS$-algebra $A$ and an $\OS$-module morphism $j:E\otimes E \otimes L^\vee \rightarrow A$ such that
$$
j(v\tensor v \tensor f) = f(q(v))\cdot 1 \quad \mbox{and} \quad 
j(u\tensor v \tensor f) \cdot j(v \tensor w \tensor g) = 
f(q(v)) \, j(u \tensor w \tensor g),
$$
then there exists a unique $\OS$-algebra homomorphism $\psi :
C_0(E, q, L) \to A$ satisfying $j = \psi \circ i_{C_0}$.  
\end{proposition}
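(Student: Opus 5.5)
The plan is the standard argument for universal properties of quotient algebras. We reduce to the universal property of the tensor algebra and then check that the kernel of the projection $p$ is killed.

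\emph{Existence.} Since $T(E\otimes E\otimes L^\vee)$ is the free (sheafified) associative $\OS$-algebra on the $\OS$-module $E\otimes E\otimes L^\vee$, the module morphism $j$ extends uniquely to an $\OS$-algebra morphism $\tilde\psi\colon T(E\otimes E\otimes L^\vee)\to A$ with $\tilde\psi\circ\theta=j$. On a homogeneous local section $x_1\otimes\cdots\otimes x_k$ of degree $k$ (each $x_i$ a section of $E\otimes E\otimes L^\vee$), $\tilde\psi$ is given by $j(x_1)\cdots j(x_k)$; these local formulas glue because they are canonical. Next we show $\tilde\psi$ annihilates $J_1^{C_0}+J_2^{C_0}$. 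Since the kernel of an algebra map is a two-sided ideal, it suffices to check that the generators vanish, locally:
\begin{itemize}
\item the first hypothesis gives $\tilde\psi(v\otimes v\otimes f - f(q(v))\cdot 1)=j(v\otimes v\otimes f)-f(q(v))\cdot 1=0$;
\item the second hypothesis gives $\tilde\psi(u\otimes v\otimes f\otimes v\otimes w\otimes g - f(q(v))\,u\otimes w\otimes g)=j(u\otimes v\otimes f)\,j(v\otimes w\otimes g)-f(q(v))\,j(u\otimes w\otimes g)=0$.
\end{itemize}
Hence $\tilde\psi$ factors through the quotient sheaf of algebras $C_0(E,q,L)$, giving $\psi$ with $\psi\circ p=\tilde\psi$, and therefore $\psi\circ i_{C_0}=\tilde\psi\circ\theta=j$.

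\emph{Uniqueness.} The image of $i_{C_0}$ generates $C_0(E,q,L)$ as a sheaf of $\OS$-algebras, because $p$ is surjective and $T(E\otimes E\otimes L^\vee)$ is generated in degree one. So any two algebra morphisms that agree with $j$ on this image agree on local sections, and hence are equal as morphisms of sheaves.

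\emph{Main obstacle.} The only delicate point is sheaf-theoretic: the tensor algebra and the quotient $T(E\otimes E\otimes L^\vee)/(J_1^{C_0}+J_2^{C_0})$ are sheafifications of presheaf constructions. To handle this, we carry out the factorization on the presheaf level on affine opens and then invoke the universal property of sheafification. This works because $A$ is a sheaf, and a two-sided ideal sheaf is generated by its local generators.
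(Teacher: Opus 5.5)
Your proposal is correct, and it is essentially the standard argument the paper relies on; the paper gives no proof of its own and defers to Auel's Proposition~1.1 and the thesis. You extend $j$ to an algebra map on $T(E\otimes E\otimes L^\vee)$, check that the local generators of $J_1^{C_0}+J_2^{C_0}$ are killed, descend to the quotient sheaf using that $A$ is a sheaf, and obtain uniqueness from the surjectivity of $p$ together with generation of the tensor algebra in degree one.
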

\begin{proof}
    See \cite[Proposition 2.19]{sohamthesis} for details.
\end{proof}
\vspace{.05in}
\begin{rmk}
From this point onward, by the \emph{even Clifford algebra} of a quadratic form $(E, q, L)$, where $L$ is a non-trivial line bundle, we shall refer to its \emph{generalized even Clifford algebra}. This extension accounts for the twist by $L$. 
\end{rmk}
\vspace{.05in}

    \begin{proposition}\label{induceevenclifford}
    Let \( (E, q, L) \) be a  quadratic form of rank \( n \) on a scheme \( S \). Write \( n = 2m \) or \( n = 2m + 1 \).

\begin{itemize}
\item[(a)] \( C_0(E, q, L) \) is a locally free \( \mathscr{O}_S \)-algebra of rank \( 2^{n-1} \).

 \item[(b)] Any similarity \( (\varphi, \lambda): (E, q, L) \to (E', q', L') \) induces an \( \mathscr{O}_S \)-algebra isomorphism
  \[
  C_0(\varphi, \lambda): C_0(E, q, L) \xrightarrow{\cong} C_0(E', q', L').
  \]
  
\item[(c)] For any morphism of schemes \( p: S' \to S \), there is a canonical \( \mathscr{O}_{S'} \)-module isomorphism
  \[
  C_0(p^*(E, q, L)) \xrightarrow{\cong} p^* C_0(E, q, L).
  \]
\end{itemize}
\end{proposition}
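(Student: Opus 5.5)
We prove (a), (b), (c) of Proposition \ref{induceevenclifford} by reducing to local computations, using the universal property in Proposition \ref{universalC_0} to produce the canonical maps. The maps are then checked to be isomorphisms Zariski-locally, where everything trivializes.

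For (b), let $(\varphi,\lambda)$ be a similarity, so $q'\circ\varphi=\lambda\circ q$. Define
\[
j\colon E\otimes E\otimes L^\vee\to C_0(E',q',L'),\qquad j(u\otimes v\otimes f)=i_{C_0}\bigl(\varphi(u)\otimes\varphi(v)\otimes f\circ\lambda^{-1}\bigr).
\]
We verify the two relations of Proposition \ref{universalC_0}. The first holds because $(f\circ\lambda^{-1})(q'(\varphi v))=f(q(v))$. The second relation is transported in exactly the same way. The universal property then gives an algebra map $C_0(\varphi,\lambda)$. Applying the same construction to $(\varphi^{-1},\lambda^{-1})$ gives a map in the other direction. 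By uniqueness in the universal property, both composites are the identity, since they agree with the identity on the image of $i_{C_0}$, which generates the algebra. This proves (b).

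For (c), the pullback $p^*$ commutes with tensor products and with the formation of the tensor algebra. Right-exactness of $p^*$ shows that it sends the ideal $J_1^{C_0}+J_2^{C_0}$ onto the image of the corresponding ideal built from $p^*q$. The generators match because $p^*q$ is given on pulled-back sections by $q$, and the relations are local in sections. Hence the pullback of the presentation of $C_0(E,q,L)$ is a presentation of $C_0(p^*(E,q,L))$, and we obtain a canonical isomorphism. Equivalently, one can use the universal property to produce the canonical map $C_0(p^*(E,q,L))\to p^*C_0(E,q,L)$. One then checks that it is an isomorphism locally, after first establishing (a), since both sides have compatible local bases.

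Part (a) is the main obstacle. It is local, so we work on an open $U$ where $E|_U\cong\mathscr{O}_U^n$ with basis $e_1,\dots,e_n$ and $L|_U\cong\mathscr{O}_U$. Here we need that $C_0$ commutes with restriction to opens, which is the open-immersion case of the argument for (c), and that case needs no rank input. Over $U$ the relations $J_2^{C_0}$ let us identify $C_0(E,q,L)|_U$ with the classical even Clifford algebra $C_0(E|_U,q)$. Concretely, we send $u\otimes v\otimes f$ to $f(1)\,uv$, which is well defined by the universal property. The inverse comes from the universal property of the classical even Clifford algebra, whose generators are products $uv$ subject to the analogous relations. We then invoke the classical fact (Knus, Ch.~IV) that $C(E,q)$ is free with basis the ordered products $e_{i_1}\cdots e_{i_k}$ for $i_1<\dots<i_k$. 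The even part is free on the products of even length, and there are $2^{n-1}$ of them.

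The delicate point is that the local identification is compatible with the chosen trivialization of $L$. Changing the trivialization by a unit $a$ is the identity on $E$ together with multiplication by $a$ on $L$, which is a similarity. By (b) this induces an algebra isomorphism, so local freeness of rank $2^{n-1}$ does not depend on the choice. This completes the plan for (a), and hence for the isomorphism claim in (c).
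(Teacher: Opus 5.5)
Your argument for (b) is correct. The right-exactness argument for (c) also works once you add one point you gloss over. The relations defining $C_0(p^*(E,q,L))$ involve arbitrary local sections $\sum_i a_i\,p^*v_i$ of $p^*E$, not only pulled-back ones. They are generated by the pulled-back relations because the $J_1^{C_0}$- and $J_2^{C_0}$-relations are quadratic in $v$, and their polarizations (e.g.\ $u\otimes v\otimes f+v\otimes u\otimes f-f(b_q(u,v))$) again lie in the respective ideals. The paper gives no argument of its own for this proposition; it simply cites Auel's Proposition 1.2.

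The genuine gap is in (a), at the one step that carries the content: the local isomorphism $C_0(E,q,\mathscr{O}_U)\cong C_0(E|_U,q)$ with the classical even Clifford algebra. The map $u\otimes v\otimes f\mapsto f(1)\,uv$ does exist by Proposition~\ref{universalC_0}, and it is surjective. Your inverse, however, rests on a ``universal property of the classical even Clifford algebra, whose generators are products $uv$ subject to the analogous relations.'' No such property is available a priori. $C_0(E,q)$ is defined as the even part of $T(E)/(v\otimes v-q(v))$, and its universal property concerns linear maps out of $E$, not out of $E\otimes E$. The claim that $C_0(E,q)$ is presented by generators $E\otimes E$ and the $J_1$/$J_2$-type relations is exactly the statement that your map is injective, so the argument is circular at this point.

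To close the gap you must either quote this identification as a known result or prove injectivity directly, for instance by straightening. Write $\langle uv\rangle$ for the class of $u\otimes v\otimes 1$ in $C_0(E,q,\mathscr{O}_U)$. Besides $\langle vv\rangle=q(v)$ and $\langle uv\rangle\langle vw\rangle=q(v)\langle uw\rangle$, polarization gives $\langle uv\rangle+\langle vu\rangle=b_q(u,v)$ and $\langle ab\rangle\langle cd\rangle+\langle ac\rangle\langle bd\rangle=b_q(b,c)\langle ad\rangle$. So in a product $\langle e_{i_1}e_{i_2}\rangle\cdots\langle e_{i_{2k-1}}e_{i_{2k}}\rangle$ you can transpose any two adjacent letters at the cost of a sign and shorter products, and you can delete adjacent repeated letters. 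Induction on length and on the number of inversions then shows that the $2^{n-1}$ products with $i_1<\dots<i_{2k}$ span $C_0(E,q,\mathscr{O}_U)$. These products map to the standard basis of the free module $C_0(E|_U,q)$, hence are linearly independent. So they form a basis, and your map is an isomorphism.

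Finally, the ``delicate point'' about compatibility with the trivialization of $L$ is not needed. Local freeness of rank $2^{n-1}$ is checked on each member of a cover, and no compatibility between the local identifications is required.
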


\begin{proof}
The result is a consequence of \cite[Proposition 1.2]{auel2015surjectivity}. See also \cite[Proposition 2.20 and Proposition 2.21]{sohamthesis}. 
\end{proof}
\vspace{.05in}

\begin{proposition} \label{quotientclifford}
Let $ q: E \to L$ be a binary quadratic form. Then the quotient $ C_0(E, q, L)/\OS $ is canonically isomorphic to $ (\wedge^2 E \otimes L^\vee) $ as $\OS$-modules.
\end{proposition}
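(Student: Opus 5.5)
We construct a canonical $\OS$-linear map $\Phi\colon C_0(E,q,L)\to \wedge^2E\otimes L^\vee$, show it is surjective with kernel exactly $\OS\cdot 1$, and read off the isomorphism. Everything will be checked locally, so gluing follows from canonicity.

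\textbf{Step 1: the map on the tensor algebra.} Define $\phi\colon E\otimes E\otimes L^\vee\to \wedge^2E\otimes L^\vee$ by $u\otimes v\otimes f\mapsto (u\wedge v)\otimes f$. We want to extend $\phi$ to $T(E\otimes E\otimes L^\vee)$ so that it kills $J_1^{C_0}+J_2^{C_0}$ and the image of $\OS$. A direct multiplicative extension is impossible, since the target is not an algebra. Instead we use the filtration. Since $n=2$, Proposition~\ref{induceevenclifford}(a) says $C_0$ is locally free of rank $2$, and it is generated as an $\OS$-module by $1$ together with the image of $i_{C_0}$. Indeed, $J_2^{C_0}$ reduces a product of two generators $u\otimes v\otimes f\cdot u'\otimes v'\otimes g$ to degree $\le 1$: locally write $v=a v'+b w$ and use bilinearity and $J_1$. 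So it suffices to show two things. First, the composite $E\otimes E\otimes L^\vee\xrightarrow{i_{C_0}} C_0\to C_0/\OS$ is surjective and factors through $\wedge^2E\otimes L^\vee$, because $v\otimes v\otimes f\mapsto f(q(v))\cdot 1\equiv 0$. Second, the induced surjection $\wedge^2E\otimes L^\vee\to C_0/\OS$ is an isomorphism.

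\textbf{Step 2: local verification.} Work on an open $U$ where $E$ is free with basis $e_1,e_2$ and $L$ is free with generator $\ell$, dual $\ell^\vee$. Put $q(xe_1+ye_2)=(ax^2+bxy+cy^2)\ell$. The relations give $e_1\otimes e_1\otimes\ell^\vee=a$, $e_2\otimes e_2\otimes\ell^\vee=c$, and $e_1\otimes e_2\otimes\ell^\vee+e_2\otimes e_1\otimes\ell^\vee=b$. Hence $C_0|_U$ is spanned by $1$ and $\tau=e_1\otimes e_2\otimes\ell^\vee$. Since $C_0|_U$ is free of rank $2$ by Proposition~\ref{induceevenclifford}(a), the set $\{1,\tau\}$ is a basis. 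This basis computation is the key point: surjectivity plus rank $2$ forces freeness. Consequently $C_0/\OS$ is free on $\bar\tau$, which is the image of $(e_1\wedge e_2)\otimes\ell^\vee$, a generator of the rank-one module $\wedge^2E\otimes L^\vee$. A surjection between invertible modules is an isomorphism, since locally it is multiplication by a unit. The map $\wedge^2E\otimes L^\vee\to C_0/\OS$, $(u\wedge v)\otimes f\mapsto \overline{u\otimes v\otimes f}$, is therefore an isomorphism.

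\textbf{Step 3: well-definedness and canonicity.} The map $u\otimes v\otimes f\mapsto \overline{i_{C_0}(u\otimes v\otimes f)}$ is defined globally without choices. It is alternating modulo $\OS$ by $J_1$ together with polarization: $u\otimes v\otimes f+v\otimes u\otimes f=f(b_q(u,v))$. Hence it descends to $\wedge^2E\otimes L^\vee$ globally. Because the isomorphism property is local, the map is a global isomorphism, and it is compatible with similarities and base change by construction.

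The main obstacle is in Step 2: justifying that $1$ and $\tau$ form a basis, not merely a spanning set. This is where Proposition~\ref{induceevenclifford}(a) is invoked. If one prefers not to rely on it, the alternative is to construct an explicit representation $C_0|_U\to \OU[\tau]/(\tau^2-b\tau+ac)$ via the universal property, Proposition~\ref{universalC_0}, by checking the two relations on generators.
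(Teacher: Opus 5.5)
Your argument is correct and follows essentially the same route as the paper: the composite $E\otimes E\otimes L^\vee\to C_0(E,q,L)\to C_0(E,q,L)/\OS$ kills $v\otimes v\otimes f$, so it factors through $\wedge^2E\otimes L^\vee$. That map is locally surjective because $C_0$ is spanned by $1$ and $\tau$, and it is then an isomorphism between invertible modules. The only soft spot is your Step 1 degree-reduction sketch, which plays the role of the paper's appeal to PBW; reducing $\tau^2$ actually needs the polarized $J_2$ relation together with $J_1$, which gives $\tau^2=b\tau-ac$.
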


\begin{proof}

The even Clifford algebra $ C_0(E, q, L) $ satisfies a universal property with respect to the quadratic form $ q $. Specifically, there exists a natural map induced by the tensor product structure:
$$
E \otimes E \otimes L^\vee \xrightarrow{\theta} T(E \otimes E \otimes L^\vee) \xrightarrow{p} C_0(E, q, L),
$$
where:
\begin{itemize}
    \item $ T(E \otimes E \otimes L^\vee) $ denotes the tensor algebra of $ E \otimes E \otimes L^\vee $,
    \item $ p $ is the quotient map onto the even Clifford algebra $ C_0(E, q, L) $,
    \item $ \theta $ is the canonical inclusion of $ E \otimes E \otimes L^\vee $ into the tensor algebra.
\end{itemize}

Quotienting further by the action of $ \OS $, we obtain the following sequence of maps:
$$
E \otimes E \otimes L^\vee \xrightarrow{\theta} T(E \otimes E \otimes L^\vee) \xrightarrow{p} C_0(E, q, L) \xrightarrow{Q} C_0(E, q, L)/\OS.
$$

On the level of sections over an open subset $ U \subset S $, the composition of these maps vanishes on elements of the form:
$$
\{ x \otimes x \otimes f : x \in \Gamma(U, E), f \in \Gamma(U, L^\vee) \}.
$$
This follows directly from the defining relations of the Clifford algebra.
By the universal property of quotient, there exists a unique morphism of $\OS$-modules:
$$
\xi: \wedge^2 E \otimes L^\vee \to C_0(E, q, L)/\OS.
$$

To establish surjectivity of $ \xi $, we localize the problem. Now we can assume $E$ and $L$ are free $R$-modules. By the Poincaré–Birkhoff–Witt (PBW) theorem \cite[Chapter IV, Theorem 1.5.1, \S1]{knus} and the binary nature of the quadratic form $ q $ ensures that $ C_0(E, q, L) $ is generated locally by elements of the form $1$ and $( x \wedge y)  $, where $ x, y \in E $. Thus, the map $ \xi $ is surjective after localizing and hence globally surjective.

Both $ C_0(E, q, L)/\OS $ and $ \wedge^2 E \otimes L^\vee $ are locally free $\OS$-modules of rank 1. This follows from the fact that $ E $ is a rank-2 vector bundle (since $ q $ is binary), and hence $ \wedge^2 E $ is a line bundle. Tensoring with $ L^\vee $, another line bundle, preserves the rank.

Since $ \xi $ is a surjective morphism between two locally free $\OS$-modules of the same rank, it must also be injective. Therefore, $ \xi $ is an isomorphism.
\end{proof}
\vspace{.05in}
\begin{rmk}\label{localsplit}
In view of Remark~\ref{remark:failure_of_splitting}, we may locally express the sheaf $ C_0(E, q, L) $ as
$$
C_0(E, q, L) \cong \OS \oplus (\wedge^2 E \otimes L^\vee).
$$
\end{rmk}

\vspace{.05in}
\begin{lemma}[Discriminant Comparison for Binary Quadratic Forms and Their Associated even Clifford Algebras]\cite[Lemma 2.26]{sohamthesis}\label{discriminant-comparison}
Let $ S $ be a general scheme, and let $ q : E \to L $ be a binary quadratic form over $ S $, where $ E $ is a rank-$ 2 $ vector bundle and $ L $ is a line bundle on $S$. Denote by $ C_0(E, q, L) $ the even part of the associated generalized Clifford algebra. Then the discriminant of the quadratic form $ q $ and the discriminant of the quadratic algebra $ C_0(E, q, L) $ differ only by a sign. 
  \end{lemma}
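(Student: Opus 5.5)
The statement is local in nature, so I will reduce to a free computation and then check that the two discriminants are compatible under the canonical identification of line bundles. First I identify the target bundles. By Proposition~\ref{quotientclifford}, $C_0(E,q,L)/\OS \cong \wedge^2E\otimes L^\vee$, so $N=(C_0/\OS)^\vee \cong (\det E)^\vee\otimes L$. Hence $\Delta(C_0)$ lives in $N^{\otimes 2}\cong L^{\otimes 2}\otimes(\det E)^{\otimes -2}$. For $n=2$ this is exactly the bundle containing $\Delta(q)$. The claim to prove is therefore $\Delta(q)=-\Delta(C_0(E,q,L))$ as global sections of this bundle, under the canonical isomorphism above. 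Since both sides are glued from local data, it suffices to check the equality on an affine open $U$ where $E$ and $L$ are free, provided the local expressions are computed in compatible bases.

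On such a $U$, choose a basis $e_1,e_2$ of $E$ and a generator $\ell$ of $L$ with dual $\ell^\vee$, and write $q(xe_1+ye_2)=(ax^2+bxy+cy^2)\ell$. The Gram matrix of $b_q$ is $\begin{pmatrix}2a&b\\ b&2c\end{pmatrix}$, so $\Delta(q)|_U=(4ac-b^2)\,(e_1\wedge e_2)^{\vee\otimes2}\otimes\ell^{\otimes 2}$. On the algebra side, set $\tau=e_1\otimes e_2\otimes\ell^\vee\in C_0$. Its image in $C_0/\OS$ corresponds to $e_1\wedge e_2\otimes\ell^\vee$, which generates it by Proposition~\ref{quotientclifford}; hence $\{1,\tau\}$ is a basis of $C_0|_U$.

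The key computation is the quadratic relation satisfied by $\tau$. I would use the defining relations $J_1^{C_0},J_2^{C_0}$. First, $e_1\otimes e_2\otimes\ell^\vee+e_2\otimes e_1\otimes\ell^\vee=b\cdot1$, which comes from polarizing $v\otimes v\otimes f=f(q(v))$ at $v=e_1+e_2$. Second, $\tau\cdot\tau=(e_1 e_2)(e_1e_2)$. Rewriting the middle $e_2\otimes e_1$ as $b-e_1\otimes e_2$ and applying $J_2$ to collapse $e_1\otimes\cdots\otimes e_1$-type products gives $\tau^2=b\tau-ac$, so $\tau^2-b\tau+ac=0$. In the notation of Definition~\ref{discalg} this means $r=-b$ and $s=ac$, so $\Delta(C_0)|_U=(b^2-4ac)\,\bar\tau^\vee\otimes\bar\tau^\vee$. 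Since $\bar\tau^\vee$ corresponds to $(e_1\wedge e_2)^\vee\otimes\ell$ under $N\cong(\det E)^\vee\otimes L$, the two local sections are negatives of each other.

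The main obstacle is the careful manipulation in $C_0$ that yields $\tau^2=b\tau-ac$. The relation $J_2$ contracts only a repeated middle vector, so the reordering must be done exactly, including the $L^\vee$ factors, which multiply as $\ell^\vee\otimes\ell^\vee$ and pair with $q$ through $f(q(v))$. Signs are easy to get wrong here. A secondary point is to verify that the identification of $\bar\tau^\vee$ is independent of the chosen basis. This follows because $\tau$ maps precisely to $e_1\wedge e_2\otimes\ell^\vee$ under the canonical map $\xi$, so a change of basis rescales both sides by the same square $\det(g)^2u^{-2}$, and the local equalities glue to the global identity.
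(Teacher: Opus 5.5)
Your proposal is correct and is essentially the argument the paper relies on. The paper only cites the thesis, but its local relation $\tau^2=b\tau-ac$ (see Lemma~\ref{traceablebimodule}) is exactly what gives $b^2-4ac$ against $4ac-b^2$ in compatible bases, which is your computation. One step deserves a more careful justification: the ``middle'' rewriting in $\tau\cdot\tau$ is the polarization of $J_2^{C_0}$ in its middle variable, namely $(e_1e_2)(e_1e_2)+(e_1e_1)(e_2e_2)=b\,(e_1e_2)$ in $C_0$, and it does not follow from the degree-one relation alone.
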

\begin{proof}
    For a detailed proof, see \cite[Lemma 2.28]{sohamthesis}.
\end{proof}

\begin{definition}[\textbf{Clifford bimodule}]
\upshape
    Here, we recall its definition from \cite{auel2015surjectivity}, emphasizing its functorial nature and compatibility with base change.

The \( \OS \)-module \( E \otimes T(E \otimes E \otimes L^\vee) \) naturally admits a right \( T(E \otimes E \otimes L^\vee) \)-module structure, denoted here by \( \otimes \). An \( \OS\)-bilinear map 
\[
\ast : (E \otimes E \otimes L^\vee) \times E \to E \otimes (E \otimes E \otimes L^\vee)
\]
defined by
\[
(u \otimes v \otimes f) \ast w = u \otimes (v \otimes w \otimes f),
\]
for sections \( u, v, w \) of \( E \) and \( f \) of \( L^\vee \), induces a left \( T(E \otimes E \otimes L^\vee) \)-module structure \( \ast \) on \( E \otimes T(E \otimes E \otimes L^\vee) \). This structure is uniquely determined by its commutativity with the right \( T(E \otimes E \otimes L^\vee) \)-module structure. Consequently, we define
\[
C_1(E, q, L) = \frac{E \otimes T(E \otimes E \otimes L^\vee)}{E \otimes J_1^{C_0} + J_1^{C_0} \ast E},
\]
accompanied by the canonically induced \( \OS \)-module morphism
\[
i: E \to C_1(E, q, L),
\]
which serves as a locally split embedding. It can be readily verified that \( E \otimes J_2^{C_0} \subset J_1^{C_0} \ast E \) and \( J_2^{C_0} \ast E \subset E \otimes J_1^{C_0} \), ensuring that \( C_1(E, q, L) \) inherits a bimodule structure over \( C_0(E, q, L) \). Denote the right and left \( C_0(E, q, L) \)-module structures by \( \cdot \) and \( \ast \), respectively.

Assuming \( \text{rank}(E) = n = 2m \) or \( n = 2m + 1 \), the module admits a filtration
\[
E = F_1 \subset F_3 \subset \cdots \subset F_{2m+1} = C_1(E, q, L),
\]
where \( F_{2i+1} \) denotes the image of the truncation \( E \otimes T^{\leq i}(E \otimes E \otimes L^\vee) \) in \( C_1(E, q, L) \) for \( 0 \leq i \leq m \). The associated graded pieces satisfy
\[
F_{2i+1}/F_{2i-1} \cong \wedge^{2i+1} E \otimes (L^\vee)^{\otimes i}.
\]
Thus, \( C_1(E, q, L) \) is a locally free \( \OS \)-module of rank \( 2^{n-1} \).

\end{definition}
\vspace{.05in}
\begin{proposition}[Universal Property of the Clifford Bimodule]\cite[Proposition 1.4] {auel2015surjectivity}\label{universalbimodule}
Let $ B $ be a $ C_0(E, q, L) $-bimodule, with right and left actions denoted by $ \cdot $ and $ \ast $, respectively. Let
$ j : E \to B $ be an $ \OS $-module morphism satisfying the identity
$$
j(u) \cdot i_{C_0}(v \otimes w \otimes f) = i_{C_0}(u \otimes v \otimes f) \ast j(w),
$$
for all sections $ u, v, w \in \Gamma(U, E) $ and $ f \in \Gamma(U, L^\vee) $, where $ U \subseteq S $ is any open subset. Then there exists a unique $ C_0(E, q, L) $-bimodule morphism
$$
\psi : C_1(E, q, L) \to B
$$
such that $ j = \psi \circ i $.
\end{proposition}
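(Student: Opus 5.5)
We must prove the universal property of the Clifford bimodule $C_1(E,q,L)$ (Proposition \ref{universalbimodule}). The plan is to build $\psi$ first on the tensor-level module $E\otimes T(E\otimes E\otimes L^\vee)$, then show it descends to the quotient.

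\emph{Step 1: the map on $E\otimes T$.} Write $T=T(E\otimes E\otimes L^\vee)$. By Proposition \ref{universalC_0}, $B$ is a bimodule over $T$ via the surjection $p:T\to C_0(E,q,L)$. Define
\[
\tilde\psi: E\otimes T\to B,\qquad \tilde\psi(u\otimes t)=j(u)\cdot p(t).
\]
This is the unique right $T$-linear extension of $j$. It is well defined as an $\OS$-module map because $(u,t)\mapsto j(u)\cdot p(t)$ is $\OS$-bilinear. All constructions are sheafified over opens $U\subseteq S$, so it suffices to work on sections.

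\emph{Step 2: left linearity.} We check that $\tilde\psi$ is also left $T$-linear for the action $\ast$. Since $T$ is generated as an algebra by $E\otimes E\otimes L^\vee$, and both actions commute, it suffices to verify
\[
\tilde\psi\big((u\otimes v\otimes f)\ast(w\otimes t)\big)=i_{C_0}(u\otimes v\otimes f)\ast\tilde\psi(w\otimes t).
\]
The left side equals $\tilde\psi(u\otimes(v\otimes w\otimes f)\otimes t)=j(u)\cdot i_{C_0}(v\otimes w\otimes f)\cdot p(t)$. By the hypothesis on $j$ and the fact that $B$ is a bimodule, this equals $i_{C_0}(u\otimes v\otimes f)\ast j(w)\cdot p(t)$, which is the right side.

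\emph{Step 3: descent to $C_1(E,q,L)$.} Next, $\tilde\psi$ must vanish on $E\otimes J_1^{C_0}+J_1^{C_0}\ast E$. Vanishing on $E\otimes J_1^{C_0}$ is immediate, since $p$ kills $J_1^{C_0}$ and $\tilde\psi$ factors through $p$ on the right. For $J_1^{C_0}\ast E$, the left $T$-linearity from Step 2 gives $\tilde\psi(x\ast(w\otimes t))=p(x)\ast\tilde\psi(w\otimes t)=0$ for $x\in J_1^{C_0}$. Here one must be careful that $J_1^{C_0}\ast E$ is the submodule generated by such elements, stable under both actions; this follows from the two ideals being two-sided and the actions commuting. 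Hence $\tilde\psi$ induces $\psi:C_1(E,q,L)\to B$ with $\psi\circ i=j$.

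\emph{Step 4: bimodule structure and uniqueness.} Right and left linearity of $\psi$ over $C_0(E,q,L)$ follow from the $T$-linearity of $\tilde\psi$ together with surjectivity of $p$. The bimodule structure on $C_1(E,q,L)$ is the induced one, using $E\otimes J_2^{C_0}\subset J_1^{C_0}\ast E$ and its counterpart. For uniqueness, $C_1(E,q,L)$ is generated as a right $C_0$-module by $i(E)$, being a quotient of $E\otimes T$. So any bimodule map agreeing with $j$ on $i(E)$ is determined.

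The main obstacle is Step 2: checking that the left action on $E\otimes T$ is characterized by commuting with the right action, and that verifying on generators suffices. I would do this by induction on tensor degree.
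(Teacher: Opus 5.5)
Your proposal is correct, and it is the standard argument: extend $j$ right-$T$-linearly to $\tilde\psi$ on $E\otimes T$, use the hypothesis to get left $T$-linearity on the generators $E\otimes E\otimes L^\vee$, and descend through $E\otimes J_1^{C_0}+J_1^{C_0}\ast E$. For the passage from generators to all of $T$, note that $\{y\in T : \tilde\psi(y\ast m)=p(y)\ast\tilde\psi(m)\ \text{for all } m\}$ is a subalgebra, so no induction on degree is needed. The paper gives no argument of its own, only deferring to \cite[Proposition 2.28]{sohamthesis} and Auel, so there is no different route to compare; the one cosmetic fix is that Step 1 needs no appeal to Proposition~\ref{universalC_0}, since $B$ becomes a $T$-bimodule simply by restriction of scalars along $p$.
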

\begin{proof}
    See \cite[Proposition 2.28]{sohamthesis} for a detailed proof.
\end{proof}

\vspace{.05in}

\begin{proposition} \cite[Proposition 1.5]{auel2015surjectivity}\label{semilinear}
    Let \( (E, q, L) \) be a  quadratic form of rank \( n \) on a scheme \( S \). Write \( n = 2m \) or \( n = 2m + 1 \).

\begin{itemize}
\item[(a)] There is a canonical isomorphism of $ \mathscr{O}_{S'} $-modules:
$$
C_1(p^*(E, q, L)) \xrightarrow{\cong} p^* C_1(E, q, L).
$$

 \item[(b)]  Any similarity transformation \( (\varphi, \lambda) : (E, q, L) \to (E', q', L') \) induces an \( \OS \)-module isomorphism  
\[
C_1(\varphi, \lambda) : C_1(E, q, L) \to C_1(E', q', L'),
\]  
which is \( C_0(\varphi, \lambda) \)-semilinear, as illustrated by the following commutative diagram:  
 \[ \begin{tikzcd}
C_1(E, q, L) \otimes_{C_0(E, q, L)} C_1(E, q, L)  \arrow{rr}{C_1(\varphi, \lambda) \otimes C_1(\varphi, \lambda)} \arrow{d}{m} & & C_1(E', q', L') \otimes_{C_0(E', q', L')} C_1(E', q', L') \arrow{d}{m} \\%
C_0(E, q, L) \otimes_{\OS} L \arrow{rr}{C_0(\varphi, \lambda) \otimes \lambda}&& C_0(E', q', L') \otimes_{\OS} L'
\end{tikzcd}
\]
where m represents the multiplication maps in the respective modules.
 \end{itemize}
\end{proposition}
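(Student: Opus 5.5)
The plan is to reduce both parts to the universal properties already established, namely Proposition \ref{universalC_0} for $C_0$ and Proposition \ref{universalbimodule} for $C_1$. The objects $C_1(E,q,L)$ and $p^*C_1(E,q,L)$ are then compared through the tensorial presentation
\[
C_1(E,q,L)=\bigl(E\otimes T(E\otimes E\otimes L^\vee)\bigr)\big/\bigl(E\otimes J_1^{C_0}+J_1^{C_0}\ast E\bigr).
\]

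For (a), I will use that $p^*$ is right exact and commutes with tensor products and with tensor algebras. This gives $p^*T(E\otimes E\otimes L^\vee)\cong T(p^*E\otimes p^*E\otimes p^*L^\vee)$, and $p^*(L^\vee)\cong (p^*L)^\vee$ because $L$ is invertible. The ideal $J_1^{C_0}$ is generated by the local sections $v\otimes v\otimes f-f(q(v))$. Its image in the pullback is the ideal generated by the pulled-back generators. These generate the same ideal as the corresponding ideal for $p^*q$. The point to check is that relations for $p^*q$ evaluated on sums $\sum a_i v_i$, with $a_i\in\mathscr{O}_{S'}$, are consequences of the pulled-back relations together with the polar form relations. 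This holds because $b_{p^*q}=p^*b_q$ is bilinear, and the polarized relation $u\otimes v\otimes f+v\otimes u\otimes f-f(b_q(u,v))$ lies in $J_1^{C_0}$. The same argument applies to the left-twisted piece $J_1^{C_0}\ast E$. Right exactness of $p^*$ on the defining presentation then gives the canonical isomorphism $C_1(p^*(E,q,L))\cong p^*C_1(E,q,L)$.

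For (b), I will define $j:E\to C_1(E',q',L')$ by $j=i'\circ\varphi$. I regard $C_1(E',q',L')$ as a $C_0(E,q,L)$-bimodule via the algebra isomorphism $C_0(\varphi,\lambda)$ of Proposition \ref{induceevenclifford}(b). On generators this isomorphism sends $u\otimes v\otimes f$ to $\varphi u\otimes\varphi v\otimes f\circ\lambda^{-1}$, so that $q'\circ\varphi=\lambda\circ q$ makes the relations compatible. The hypothesis of Proposition \ref{universalbimodule} then reads $\varphi(u)\cdot(\varphi v\otimes\varphi w\otimes f\lambda^{-1})=(\varphi u\otimes\varphi v\otimes f\lambda^{-1})\ast\varphi(w)$. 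This is exactly the defining bimodule compatibility already valid in $C_1(E',q',L')$. The universal property yields a $C_0(\varphi,\lambda)$-semilinear bimodule map $C_1(\varphi,\lambda)$. Applying the same construction to $(\varphi^{-1},\lambda^{-1})$ gives a map in the other direction. By the uniqueness in Proposition \ref{universalbimodule}, both composites are the identity, so $C_1(\varphi,\lambda)$ is an isomorphism.

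It remains to check that the displayed square with the multiplication maps $m$ commutes. Since $C_1\otimes_{C_0}C_1$ is generated by images of $E\otimes E$, it suffices to check on elements $i(u)\otimes i(w)$. There $m$ sends $i(u)\otimes i(w)$ to the element of $C_0\otimes L$ corresponding to $u\otimes w$ under $E\otimes E\otimes L^\vee\otimes L\cong E\otimes E$. The top-right path gives $\varphi u\otimes\varphi w$ read in $C_0'\otimes L'$, and this agrees with $(C_0(\varphi,\lambda)\otimes\lambda)(u\otimes w)$ because $f\circ\lambda^{-1}$ and $\lambda$ cancel. I expect the main obstacle to be making $m$ precise and well defined on the balanced tensor product, and checking that the local generator description suffices sheaf-theoretically. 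If needed, I would work Zariski-locally on a cover trivializing $E$ and $L$ and then glue, using that all maps involved are canonical.
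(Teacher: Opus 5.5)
The paper gives no proof of this proposition; it only cites Auel's Proposition~1.5. Your outline is the standard argument and is sound up to one step in checking the multiplication square. Specifically: right exactness of $p^*$ on the tensor presentation gives (a); in (b), the universal property of the Clifford bimodule applied to $j=i'\circ\varphi$ (with $C_1(E',q',L')$ viewed as a $C_0(E,q,L)$-bimodule through $C_0(\varphi,\lambda)$), together with uniqueness applied to $(\varphi^{-1},\lambda^{-1})$ for invertibility, gives the semilinear isomorphism.

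The step that fails as written is the claim that $C_1\otimes_{C_0}C_1$ is generated by the images of $E\otimes E$, so that commutativity need only be checked on the elements $i(u)\otimes i(w)$.

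\emph{Why it fails.} As a statement about $\OS$-modules this is false once $n\ge 4$. Take $L=\OS$ and $q$ nondegenerate over a field. Then $m$ is onto $C_0$: for anisotropic $e$ and $c\in C_0$, set $d=q(e)^{-2}\,ece\in C_0$; then $c=m\bigl((i(e)\cdot d)\otimes i(e)\bigr)$. On the other hand, the values $m(i(u)\otimes i(w))=uw$ span a subspace of $C_0$ of dimension at most $1+\binom{n}{2}<2^{n-1}$.

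\emph{What is true.} One has $E\otimes T^k=T^k\ast E$, so $i(E)$ generates $C_1$ as a left $C_0$-module. Writing $y=\sum_j c_j\ast i(w_j)$ and then $x\cdot c_j=\sum_k d_{jk}\ast i(u_{jk})$ gives
$x\otimes y=\sum_{j,k} d_{jk}\ast\bigl(i(u_{jk})\otimes i(w_j)\bigr)$.
So the elements $i(u)\otimes i(w)$ generate $C_1\otimes_{C_0}C_1$ only as a left $C_0$-module.

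\emph{The repair.} You need to state explicitly that both composites around the square are left $C_0(\varphi,\lambda)$-semilinear. This holds because $m$ and $m'$ are left linear, and $C_1(\varphi,\lambda)$ is semilinear. That same semilinearity is what makes $C_1(\varphi,\lambda)\otimes C_1(\varphi,\lambda)$ well defined on the balanced tensor products, which you should also note. With this added, your generator computation (where $f\circ\lambda^{-1}$ and $\lambda$ cancel) completes the proof. The same left-linearity reduction is also what lets a Zariski-locally defined $m$ glue, which matters since the statement takes $m$ for granted.
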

\vspace{.05in}
\begin{theorem}\label{cliffordbimoduleunderlyingmodule}
    The Clifford bimodule \( C_1(E, q, L) \) for a binary quadratic form (E, q, L) is isomorphic to the underlying rank 2 vector bundle \( E \).
\end{theorem}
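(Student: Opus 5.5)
The plan is to use the filtration of $C_1(E, q, L)$ recalled in the definition of the Clifford bimodule. For a binary form we have $n = 2$, so $m = 1$, and the filtration reads
\[
E = F_1 \subset F_3 = C_1(E, q, L).
\]
The graded piece is $F_3/F_1 \cong \wedge^3 E \otimes L^\vee$, and this vanishes because $E$ has rank $2$. Hence the canonical map $i\colon E \to C_1(E, q, L)$ will be an isomorphism of $\OS$-modules. Concretely, I will show that $i$ is a locally split monomorphism (as stated in the definition) and also surjective. A locally split injection with zero cokernel is an isomorphism, and both properties can be checked Zariski-locally.

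The key steps, in order, are as follows.

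(1) Reduce to an affine open $U$ on which $E$ and $L$ are free, with $E = \OU e_1 \oplus \OU e_2$ and $L^\vee = \OU f$. The formation of $C_1$ commutes with restriction to opens by Proposition~\ref{semilinear}(a).

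(2) Show surjectivity of $E \to C_1$ on $U$. By construction, $C_1$ is a quotient of $E \otimes T(E\otimes E\otimes L^\vee)$, so it suffices to check that each element $u \otimes (v\otimes w\otimes f)$ lies in the image of $E$ modulo the relations; induction on tensor degree then handles higher degrees. Using the relation $J_1^{C_0}\ast E$ in the form $v\otimes v\otimes f \ast w \equiv f(q(v)) w$, together with its polarization $(v\otimes w + w\otimes v)\otimes f \equiv f(b_q(v,w))$, and the analogous right-hand relation $E\otimes J_1^{C_0}$, every triple $u\otimes v\otimes w$ in which two of the three vectors coincide reduces to degree one. Here I use the relation on whichever adjacent pair is needed, commuting neighbours at the cost of lower-degree terms. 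Since $E$ has rank $2$, among $e_{i_1}, e_{i_2}, e_{i_3}$ two indices must coincide. Moving them to be adjacent costs only degree-one error terms, so every degree-three monomial reduces to $E$. This is the explicit form of the statement $\wedge^3 E = 0$.

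(3) Show injectivity. It is cleanest to use the rank count: $C_1$ is locally free of rank $2^{n-1} = 2$, and a surjection between locally free modules of the same finite rank is an isomorphism, the same argument as in Proposition~\ref{quotientclifford}. Alternatively, one can quote the local splitness of $i$ directly.

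I would also record the extra structure: the isomorphism is canonical, since it is induced by $i$, and hence it glues, and it transports the left $C_0(E,q,L)$-action to an action on $E$ given by $(u\otimes v\otimes f)\ast w = f(b_q(v,w))u - f(q(w))\ldots$, which is to be read off from the reduction. This is presumably what the later main theorem uses.

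The main obstacle is step (2), namely the honest bookkeeping that the two-sided relations really identify every degree-three tensor with an element of $E$, rather than merely quoting the graded pieces of the filtration. If the filtration statement from \cite{auel2015surjectivity} is accepted as given, this step collapses to the observation $\wedge^3E=0$, and I would present that as the main line, keeping the explicit reduction as a check.
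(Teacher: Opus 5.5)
Your proof is correct, but it takes a different route from the paper. The paper derives the statement from the universal property of the Clifford bimodule (Proposition~\ref{universalbimodule}) and defers the details to the thesis. That property produces $C_0(E,q,L)$-bimodule maps \emph{out of} $C_1(E,q,L)$, so using it with target $E$ presupposes that $E$ has first been given a $C_0(E,q,L)$-bimodule structure satisfying the compatibility identity.

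You instead use the filtration recalled in the definition of $C_1$. For $n=2$ the only graded piece above $F_1$ is $\wedge^3E\otimes L^\vee=0$, so the canonical map $i$ is surjective. Injectivity then follows from local splitness of $i$, or from the rank count $2^{n-1}=2$ together with the fact that a surjection between locally free modules of the same finite rank is an isomorphism.

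What your route buys is that the isomorphism is visibly the canonical map $i$. Gluing is therefore automatic, and the bimodule structure on $E$ is transported from $C_1$ rather than constructed by hand. Your degree-three reduction also makes surjectivity elementary and independent of the filtration: you apply the polarized relations from $E\otimes J_1^{C_0}$ and $J_1^{C_0}\ast E$ to the two adjacent pairs, then use that the relations form a right $T$-submodule for the induction. Injectivity, however, still rests on the rank or local-splitness statement quoted from Auel. The universal-property route has compatibility with the $C_0$-actions built into the map it produces. The price is that the bimodule structure on $E$ must be written down first, which in practice is the same local computation you perform.

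One small point: your closing formula for the left action is unfinished and is not needed for the statement. If you keep it, read it off in a local basis, e.g.\ $(e_1\otimes e_2\otimes f)\ast e_1 = f(b_q(e_1,e_2))\,e_1 - f(q(e_1))\,e_2$ and $(e_1\otimes e_2\otimes f)\ast e_2 = f(q(e_2))\,e_1$. These agree with the relations used in Lemma~\ref{traceablebimodule}.
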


\begin{proof}
The assertion follows from the universal property of the Clifford bimodule (Proposition~\ref{universalbimodule}); for details, see \cite[Remark~2.32]{sohamthesis}.
\end{proof}

\vspace{.05in}
\begin{rmk}
    From now on, via the canonical identification, $ E $ will be identified with $ C_1(E, q, L) $, in the setting of the binary quadratic form $ (E, q, L) $.
\end{rmk}

We next recall Wood's notation of a \textit{traceable} module and some of its properties.
\vspace{.05in}
\begin{definition}[\textbf{Traceable Module}]\cite[Definition 1.2]{traceablewood}
Let $ f : X \to T $ be a double cover and consider an $ \mathscr{O}_X $-module $ \mathcal{M} $. We say that $ \mathcal{M} $ is \textit{traceable}  if:
\begin{enumerate}
    \item The push-forward $ f_*\mathcal{M} $ is a locally free $\mathscr{O}_T $-module of rank 2, and
    \item $ f_*\mathcal{M} $ and $ f_*\mathscr{O}_X $ give the same trace map $ f_*\mathscr{O}_X \to\mathscr{O}_T $, i.e., the composite maps
    $$
    \begin{tikzcd}[column sep=large]
    f_*\mathscr{O}_X \arrow[r, "mult."] & \underline{\mathrm{End}}_{\mathcal{O}_T}(f_*\mathscr{O}_X, f_*\mathscr{O}_X) \arrow[r, "\mathrm{trace}"] &\mathscr{O}_T
    \end{tikzcd}
    $$
    and
    $$
    \begin{tikzcd}[column sep=large]
    f_*\mathscr{O}_X \arrow[r, "mult."] & \underline{\mathrm{End}}_{\mathcal{O}_T}(f_*\mathcal{M}, f_*\mathcal{M}) \arrow[r, "\mathrm{trace}"] &\mathscr{O}_T
    \end{tikzcd}
    $$
    agree.
\end{enumerate}
\end{definition}
 A local description of \textit{traceable} modules in terms of commutative algebra is as follows.
Assume that $ T = \Spec(A) $, where $ A =\Gamma(T, \mathscr{O}_T)$ is a commutative ring, $ X = \Spec(B) $, where $ B=\Gamma(X, \mathscr{O}_X) $ is a free $ A $-module of rank 2 and $ M $ is a finitely generated $ B $-module such that $ f_*M = M $ is a free $ A $-module of rank $2$.
The multiplication map on $ B $ is:
$$
B \otimes_A B \to B,\quad b_1 \otimes b_2 \mapsto b_1b_2,
$$
Which induces a map:
$$
\mu_B: B \to \End_A(B)
$$
sending $ b \in B $ to the endomorphism $ x \mapsto bx $. Similarly, for $ M $, we get:
$$
\mu_M: B \to \End_A(M)
$$
sending $ b \in B $ to $ m \mapsto bm $.

Since both $ B $ and $ M $ are free $ A $-modules of rank 2, the endomorphism rings $ \End_A(B) $ and $ \End_A(M) $ have well-defined trace maps to $ A $, defined as the trace of the linear operator acting on the module.

Thus we get two composite maps:

    \[
    \text{Tr}_{\mathscr{O}_X}: B \xrightarrow{\mu_B} \End_A(B) \xrightarrow{\tr} A \quad \text{for}~~ \mathscr{O}_X~~and
    \]

    \[
    \text{Tr}_M: B \xrightarrow{\mu_M} \End_A(M) \xrightarrow{\tr} A \quad for ~~\mathcal{M}.
    \]

The traceability of $\mathcal{M}$ then requires that these two maps agree:
$$
\text{Tr}_{\mathscr{O}_X}(b) = \text{Tr}_{\mathcal{M}}(b) \quad \text{for all } b \in B.
$$

That is, for every $ b \in B $, the trace of the operator $ x \mapsto bx $ on $ B $ equals the trace of the operator $ m \mapsto bm $ on $ M $.

\vspace{.05in}
\begin{lemma}\label{traceablebimodule}
    If $q:E\rightarrow L$ be a binary quadratic form, then the Clifford bimodule, $C_1(E, q, L),$ is a \textit{traceable} $C_0(E, q, L)$-module.
\end{lemma}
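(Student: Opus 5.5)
The approach is to reduce to a local computation. Traceability is Zariski-local on $S$, and by Propositions~\ref{induceevenclifford}(c) and \ref{semilinear}(a) both $C_0(E,q,L)$ and $C_1(E,q,L)$ commute with base change. So it suffices to check the equality of trace maps on an affine open $U=\Spec R$ over which $E$ and $L$ are free. Condition (1) of the definition already holds: by Theorem~\ref{cliffordbimoduleunderlyingmodule}, $C_1(E,q,L)\cong E$ is locally free of rank $2$ over $\OS$.

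On $U$, choose a basis $e_1,e_2$ of $E$ and a generator $f$ of $L^\vee$, and write
\[
q(xe_1+ye_2)=ax^2+bxy+cy^2 .
\]
By Proposition~\ref{quotientclifford} and Remark~\ref{localsplit}, $C_0(E,q,L)|_U$ is free with basis $1$ and $\tau:=i_{C_0}(e_1\otimes e_2\otimes f)$. From the relations $J_1^{C_0}$ one obtains $i_{C_0}(e_1\otimes e_2\otimes f)+i_{C_0}(e_2\otimes e_1\otimes f)=b$, obtained by polarising $v\otimes v\otimes f\mapsto f(q(v))$. Then $J_2^{C_0}$ gives
\[
\tau^2=i_{C_0}(e_1\otimes e_2\otimes f)\,i_{C_0}(e_1\otimes e_2\otimes f)=\bigl(b-i_{C_0}(e_2\otimes e_1\otimes f)\bigr)\tau .
\]
This can be reduced to the relation $\tau^2-b\tau+ac=0$. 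Hence $\operatorname{tr}_{C_0}(1)=2$ and $\operatorname{tr}_{C_0}(\tau)=b$.

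Next I would compute the left action of $\tau$ on $C_1|_U\cong E$, using the definition of $\ast$:
\[
(u\otimes v\otimes f)\ast w=u\otimes(v\otimes w\otimes f),
\]
reduced in $C_1$ via $E\otimes J_1^{C_0}$. This gives $\tau\ast w=f(b_q(e_2,w))\,e_1-f(q(e_2))\,w$, up to the sign and identification conventions fixed in Theorem~\ref{cliffordbimoduleunderlyingmodule}. The intended derivation is $e_1\otimes e_2\otimes w\otimes f\equiv e_1\cdot(e_2 w)$, where $e_2w+we_2=b_q(e_2,w)$ is used, together with the relation $e_1e_2e_2=q(e_2)e_1$. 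With this, $\tau\ast e_1 = b e_1 - c e_1$ and $\tau\ast e_2 = 2c\,e_1 - c e_2$, or a similar expression, so the matrix of $\tau$ has trace $b$. The identity acts with trace $2$. By $R$-linearity of both trace maps, the two traces agree on the basis $\{1,\tau\}$ and hence agree everywhere.

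The main obstacle is pinning down the exact action of $\tau$ on $C_1\cong E$, that is, making the identification of Theorem~\ref{cliffordbimoduleunderlyingmodule} explicit enough to extract the diagonal entries correctly. The plan is to compute $\tau\ast e_1$ and $\tau\ast e_2$ directly in $E\otimes T(E\otimes E\otimes L^\vee)$ modulo $E\otimes J_1^{C_0}+J_1^{C_0}\ast E$, using only $v\otimes v\otimes f\equiv f(q(v))$ and its polarisation, and then cross-check the result against the characteristic polynomial $\tau^2-b\tau+ac$. Cayley–Hamilton provides a sanity check for rank-$2$ free modules: if $\tau$ acts on $E$ with determinant $ac$ and satisfies $\tau^2-b\tau+ac=0$, then its trace must be $b$ whenever its action is non-scalar. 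I would therefore argue generically, over the universal ring $\mathbb{Z}[a,b,c]$, where the action is non-scalar, and then conclude for all $R$ by base change, Proposition~\ref{semilinear}(a).
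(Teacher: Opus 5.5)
There is a genuine gap at the one step the lemma actually depends on: the left action of $\tau$ on $C_1(E,q,L)\cong E$. Your reduction to the local free case and your computation $\tau^2=b\tau-ac$ in $C_0$ are fine. The problem is your formula $\tau\ast w=f(b_q(e_2,w))e_1-f(q(e_2))w$, which is incorrect. Writing $e_1(e_2w)=f(b_q(e_2,w))e_1-e_1we_2$ is legitimate, but the remaining term $e_1we_2$ is not $q(e_2)w$, because the relation $e_1e_2e_2=q(e_2)e_1$ does not apply to it. Your own output shows the error: the values $\tau\ast e_1=(b-c)e_1$ and $\tau\ast e_2=2ce_1-ce_2$ give a matrix of trace $b-2c$, not $b$. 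So ``trace $b$'' does not follow from what you wrote. Doing the computation with $E\otimes J_1^{C_0}$, $J_1^{C_0}\ast E$ and the polarisation $u\otimes(v\otimes w\otimes g)+u\otimes(w\otimes v\otimes g)\equiv g(b_q(v,w))u$ gives
\[
\tau\ast e_2=e_1\otimes(e_2\otimes e_2\otimes f)=ce_1,\qquad \tau\ast e_1=e_1\otimes(e_2\otimes e_1\otimes f)=be_1-(e_1\otimes e_1\otimes f)\ast e_2=be_1-ae_2 .
\]
Hence $\tau$ acts by $\begin{pmatrix} b & c\\ -a & 0\end{pmatrix}$, whose trace is $b=\operatorname{tr}_{C_0}(\tau)$. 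This explicit matrix comparison is exactly the paper's proof.

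Your fallback does not close the gap as stated either. Over $\mathbb{Z}[a,b,c]$, combining Cayley--Hamilton with $T^2-bT+acI=0$ for the matrix $T$ of $\tau$ on $E$ only yields $(\operatorname{tr}T-b)T=(\det T-ac)I$. The decisive input, that $T$ is not scalar, is simply asserted. Your determinant claim is also unproved, though over a domain non-scalarity alone would suffice. The missing input can be supplied without computing the action. If $T=\lambda I$, then $\lambda^2-b\lambda+ac$ kills the free module $E$, so $\lambda^2-b\lambda+ac=0$ and hence $(2\lambda-b)^2=b^2-4ac$. That is impossible, since $b^2-4ac$ is not a square in $\mathbb{Z}[a,b,c]$. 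With this argument added, together with base-change compatibility of $C_0$, $C_1$ (as $C_0$-modules) and of traces on free modules, your generic argument would be a valid alternative route that never computes the action explicitly. As written, however, neither version of your argument establishes the equality of the two trace maps.
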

\begin{proof}
    By Theorem~\ref{cliffordbimoduleunderlyingmodule}, we may assume  without loss of generality  that the Clifford bimodule corresponding to the binary quadratic form $(E, q, L)$ coincides with the original vector bundle, i.e.,
\[
C_1(E, q, L) = E.
\]
We aim to show that $ C=C_0(E, q, L) $ and $ E $ induce the same trace map $ C \to \OS $. To prove equality of two maps between sheaves of $ \OS $-modules, it suffices to check their agreement locally on open subsets of $ S $. Locally, we can assume $ C $, $ E $, and $ L $ are free modules over $ R $, simplifying computations. Let $ E $ be the rank-2 free $ R $-module with basis $ \{e_1, e_2\} $. The quadratic form $ q $ is given by:
\[
q(xe_1 + ye_2) = ax^2 + bxy + cy^2,
\]
where $ a, b, c \in R $ are coefficients defining the quadratic form. The even Clifford algebra $ C_0(E, q, L) $ is a free $ R $-module with basis $ \{1, \tau\} $, where $ \tau = e_1 \cdot e_2 $. The multiplication in $ C $ satisfies the following relations:
\begin{align*}
    \tau^2 &= b\tau - ac, \\
    \tau e_1 &= be_1 - ae_2, \\
    \tau e_2 &= ce_1.
\end{align*}
 Recall that the trace map is $ R $-linear, so it suffices to examine the action of left multiplication by the generators of $ C $. The generators of $ C $ are $ 1 $ and $ \tau $. Since $ 1 $ acts trivially (multiplication by $ 1 $ does not change the module), we focus on $ \tau $. In $ C $, the basis is $ \{1, \tau\} $. The action of $ \tau $ is given by:
\[
\tau \cdot 1 = \tau, \quad \tau \cdot \tau = b\tau - ac.
\]
This corresponds to the matrix:
\[
\begin{pmatrix}
0 & -ac \\
1 & b
\end{pmatrix}.
\]
In $ E $, the basis is $ \{e_1, e_2\} $. Using the relations $ \tau e_1 = be_1 - ae_2 $ and $ \tau e_2 = ce_1 $, the action of $ \tau $ is represented by:
\[
\begin{pmatrix}
b & c \\
-a & 0
\end{pmatrix}.
\]

The trace of a matrix is the sum of its diagonal entries. For both matrices, since the traces are equal, the trace maps induced by $ C $ and $ E $ coincide.

\end{proof}

\subsection{Standard Involution and Quaternion algebra}
The following definitions are drawn from \cite{knus}, originally formulated in the context of rings; however, they admit a natural generalization to the setting of an arbitrary base scheme $S$. 

\begin{definition}[\textbf{Involution}]
Let \( R \) be a ring. An \emph{involution} on \( R \) is a map  
\[
\sigma: R \to R
\]
satisfying the following properties for all \( a, b \in R \):  
\begin{enumerate}
    \item \textit{Anti-homomorphism:}  
    \[
    \sigma(ab) = \sigma(b) \sigma(a).
    \]
    \item \textit{Additivity:}  
    \[
    \sigma(a + b) = \sigma(a) + \sigma(b).
    \]
    \item \textit{Involutivity:}  
    \[
    \sigma^2(a) = a.
    \]
\end{enumerate}

If \( R \) is a ring with unity \( 1 \), then an involution must also satisfy \( \sigma(1) = 1 \). The pair \( (R, \sigma) \) is called a \emph{ring with involution}.  
\end{definition}

Let \( R \) be a commutative ring with involution \( \sigma_R \).  
An \emph{\( R \)-algebra with involution} is an \( R \)-algebra \( A \) with an involution \( \sigma_A \) that extends the involution of \( R \):  
\[
\sigma_A(ra) = \sigma_R(r) \sigma_A(a) \quad \text{for } a \in A, \, r \in R.
\]

Let \( (A, \sigma_A) \) and \( (B, \sigma_B) \) be rings with involution.  
A \emph{morphism of rings with involution}  
\[
\varphi \colon (A, \sigma_A) \to (B, \sigma_B)
\]  
is a homomorphism of rings \( \varphi \colon A \to B \) such that  
\[
\sigma_B (\varphi(a)) = \varphi (\sigma_A(a)), \quad a \in A.
\]  
    \vspace{.05in}
\begin{definition}[\textbf{Standard Involutions}]
Let \( R \) be a commutative ring. For any \( R \)-algebra \( A \) with involution \( \sigma \), we define the \textit{trace} \(\operatorname{tr}\) and the \textit{norm} \( n \) with respect to the involution as
\[
\operatorname{tr} (x) = x + \sigma(x), \quad n(x) = x\sigma(x), \quad x \in A.
\]
We say that \( \sigma \) is a \textit{standard involution} if  

\begin{itemize}
    \item  \( R \) is fixed under \( \sigma \), i.e., \( \operatorname{Fix}(\sigma) \supseteq R \).  
    \item \( \operatorname{tr} (x) \in R \) and \( n(x) \in R \) for all \( x \in A \). 
\end{itemize}
\end{definition}

\vspace{.05in}

\begin{proposition}\cite[Proposition 1.3.4]{knus}
    Let $A$ be an $R$-algebra which is a finitely generated projective and faithful $R$-module. Then there exists at most one standard involution on $A$.
\end{proposition}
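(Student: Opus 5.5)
The statement to prove is Knus's Proposition 1.3.4: if $A$ is an $R$-algebra that is finitely generated, projective and faithful as an $R$-module, then $A$ has at most one standard involution. The plan is to show that any standard involution $\sigma$ is determined by intrinsic data of $A$, namely a polynomial identity of degree two satisfied by each element. Uniqueness then reduces to the uniqueness of the coefficients of that identity.

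First, fix a standard involution $\sigma$ and an element $x \in A$. Put $t = x + \sigma(x)$ and $n = x\sigma(x)$; both lie in $R$ by definition. Then $\sigma(x) = t - x$. Since $t$ is central and $\sigma$ fixes $R$, we also get $\sigma(x)x = (t - x)x = tx - x^2$. On the other hand $x\sigma(x) = x(t - x) = tx - x^2$, so $\sigma(x)x = x\sigma(x) = n$. Hence every $x$ satisfies
\[ x^2 - t\,x + n = 0, \qquad t, n \in R. \]

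Second, suppose $\sigma'$ is another standard involution, with associated scalars $t', n'$. Subtracting the two identities gives
\[ (t - t')\,x = n - n'. \]
Then
\[ \sigma(x) - \sigma'(x) = (t - x) - (t' - x) = t - t', \]
so it suffices to prove that $t = t'$ for every $x$.

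Third, and this is the main obstacle, we must show that a relation $r x = s$ with $r, s \in R$ forces $r = 0$. Faithfulness alone does not immediately give this; it is where projectivity enters. I would localize at a prime $\mathfrak p$. Then $A_{\mathfrak p}$ is free of positive rank, since faithfulness of a finitely generated projective module makes the rank nowhere zero. I would also reduce to checking elements $x$ that extend $1$ to a basis locally, or to elements whose image in $A_{\mathfrak p}/R_{\mathfrak p}$ is part of a basis of that quotient.

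Concretely, the plan is to apply the relation not to $x$ alone but to both $x$ and $x + 1$, using additivity. Since $\sigma$ and $\sigma'$ are additive and fix $R$, the difference $\delta = \sigma - \sigma'$ is $R$-linear, takes values in $R$, vanishes on $R\cdot 1$, and satisfies $\delta(x)\,x \in R$ for every $x$. Applying this to $x + y$ gives $\delta(x)y + \delta(y)x \in R$ as well. I would then work locally, where $A$ is free with a basis chosen so that $R\cdot 1$ is a direct summand. This is standard for faithful f.g. projective algebras, since $R \to A$ is locally split; in the paper's setting that is Lemma~\ref{freedirectsummand}. Taking a basis element $x$ independent of $1$, the condition $\delta(x)x \in R$ forces $\delta(x) = 0$. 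Linearity then gives $\delta = 0$ locally, and therefore $\delta = 0$ globally.
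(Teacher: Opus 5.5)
The paper gives no proof of this statement; it only cites Knus. Your argument is correct and is the standard proof of this fact. Every $x$ satisfies $x^2-\tr(x)x+n(x)=0$. Hence the $R$-linear difference $\delta=\sigma-\sigma'$ takes values in $R$ and satisfies $\delta(x)x\in R$. Localizing at $\mathfrak p$ and choosing a basis $1,e_2,\dots,e_n$ of $A_{\mathfrak p}$, linear independence of $1$ and $e_i$ forces $\delta(e_i)=0$, so $\delta=0$.

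Three small repairs:
\begin{enumerate}
\item Lemma~\ref{freedirectsummand} only covers quadratic (rank $2$) algebras, so it does not justify that $1$ extends to a basis of $A_{\mathfrak p}$ in general rank. Argue directly by Nakayama: $A_{\mathfrak p}/\mathfrak p A_{\mathfrak p}$ is a nonzero $\kappa(\mathfrak p)$-algebra, because faithfulness gives positive rank. Hence $1\notin \mathfrak p A_{\mathfrak p}$, and $1$ lifts to part of a basis.
\item When you apply $\delta(x)x\in R$ to basis vectors of $A_{\mathfrak p}$, note that the identity localizes: $\delta_{\mathfrak p}(a/s)\,(a/s)=\delta(a)a/s^2\in R_{\mathfrak p}$.
\item Your opening claim that $rx=s$ forces $r=0$ is false as stated (take $x\in R$), and you only use it for basis vectors independent of $1$. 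The remarks about applying the relation to $x+1$ and to $x+y$ are never used and can be dropped.
\end{enumerate}
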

\vspace{.05in}

\begin{rmk}
    Owing to the uniqueness of standard involutions, their definition naturally extends to locally free sheaves of algebras over general schemes. This extension follows from the ability to define the involution locally and then uniquely glue the local definitions together. To formalize this construction, we proceed as follows:

\begin{enumerate}
    \item \textit{Local Data}: For each affine open subset $ U \subset X $, define $ \sigma_U : \mathcal{A}(U) \to \mathcal{A}(U) $ as the unique standard involution on $ \mathcal{A}(U) $.

    \item \textit{Compatibility on Overlaps}: Suppose $ U $ and $ V $ are affine open subsets of $ X $ with $ U \cap V \neq \emptyset $. The restrictions of $ \sigma_U $ and $ \sigma_V $ to $ U \cap V $ must coincide because the standard involution is unique. Thus:
    \[
        \sigma_U|_{U \cap V} = \sigma_V|_{U \cap V}.
    \]

    \item \textit{Gluing}: By the sheaf property of $ \mathcal{A} $, the local maps $ \sigma_U $ glue together to define a global map $ \sigma : \mathcal{A} \to \mathcal{A} $.

    \item \textit{Properties of the Global Involution}: The global map $ \sigma $ inherits the properties of the local involutions:
    \begin{itemize}
        \item $ \sigma^2 = \text{id} $,
        \item $ \sigma $ is $ \OS $-linear,
        \item $ \sigma $ satisfies any additional structural properties (e.g., anti-multiplicativity).
    \end{itemize}
\end{enumerate}
\end{rmk}
\vspace{.05in}
\begin{definition}[\textbf{Quaternion Algebra}]
 We say that an $R$-algebra $A$ is a \textit{quaternion algebra} if
\begin{enumerate}
    \item $A$ is a projective $R$-module of rank 4,
    \item $A$ has a standard involution.
\end{enumerate}
\end{definition}
\vspace{.05in}
\begin{definition}[\textbf{Quaternion Algebra over a scheme $S$}]
Let $ S $ be a general scheme. A quaternion algebra over $ S $ is a sheaf of $ \OS $-algebras $ \mathcal{A} $ satisfying the following properties:

\begin{enumerate}
    \item \textit{Locally Free of Rank 4:} 
    The sheaf $ \mathcal{A} $ is a locally free coherent $ \OS $-module of rank 4, meaning that for every point $ s \in S $, there exists an open neighborhood $ U \subseteq S $ such that:
    $$
    \mathcal{A}|_U \simeq \mathscr{O}_U^{\oplus 4}
    $$
    as $ \mathscr{O}_U $-modules.

    \item \textit{Existence of a Standard Involution:}
    There exists a global $ \OS $-linear involution:
    $$
    \sigma: \mathcal{A} \to \mathcal{A}
    $$
    called the \emph{standard involution}, satisfying the following properties for all local sections $ a \in \mathcal{A}(U) $:
    $$
    \sigma(a)a = a\sigma(a) \in \OS(U) \cdot 1_{\mathcal{A}}
    $$
    $$
    a + \sigma(a) \in \OS(U) \cdot 1_{\mathcal{A}}
    $$

    This allows us to define:
    
     \begin{itemize}

\item The \emph{reduced trace}: $ \mathrm{tr}(a) = a + \sigma(a) \in \OS(U) $
        
\item The \emph{reduced norm}: $ \mathrm{n}(a) = a \cdot \sigma(a) \in \OS(U) $
    
\end{itemize}  
    Each element $ a \in \mathcal{A}(U) $ satisfies its characteristic polynomial:
    $$
    a^2 - \mathrm{tr}(a) \cdot a + \mathrm{n}(a) \cdot 1 = 0
    $$
\item \textit{Local Structure:} 
    Locally on $ S $, $ \mathcal{A} $ admits a presentation analogous to the classical quaternion algebras over rings. That is, for any affine open subset $ U = \mathrm{Spec}(R) \subseteq S $, we have:
    $$
    \mathcal{A}(U) \simeq R\langle i, j \rangle / (i^2 - a,\, j^2 - b,\, ij + ji)
    $$
    for some $ a, b \in R^\times $. This is denoted by $ (a, b)_R $, and is called a \emph{symbol algebra} or \emph{quaternion algebra} over $ R $.
\end{enumerate}
    
\end{definition}

\begin{example}\label{quaternion}
    If $ q \colon E \to \mathscr{O}_S $ is a binary quadratic form over a general scheme $ S $, then the full Clifford algebra $ C(E, q, \mathscr{O}_S) $ naturally carries the structure of a quaternion algebra over $ S $.
\end{example}
\vspace{.05in}
\begin{rmk}
    Venkata Balaji established in his work \cite{tevbal} that every quaternion algebra can be realized as the even Clifford algebra of a ternary quadratic form. Specifically, for any quaternion algebra \( \mathcal{A} \) over an arbitrary scheme \( S \), there exists a rank 3 vector bundle \( V \) on \( S \), a quadratic form \( q \) on \( V \) with values in the line bundle \( L := \det^{-1}(\mathcal{A}) \), and an isomorphism of algebra bundles $\mathcal{A} \cong C_0(V, q, L)$, where \( C_0(V, q, L) \) denotes the even Clifford algebra associated with the triple \( (V, q, L) \). See also \cite{voightcharacterizing} and \cite{chanconic}.

\end{rmk}

\section{Classification of binary quadratic forms using Clifford pairs}
In this section we prove the main result of this work.

\begin{theorem}\label{MainTheorem}
    For any scheme $S$
, the natural map
    \[(E, q, L)\mapsto (C_0(E, q, L), C_1(E, q, L))\]
    induces a discriminant-preserving (up to sign); specifically, satisfying $$
\Delta(q) = -\Delta(C_0(E, q, L)),
$$ bijective correspondence
\[
\left\{
\parbox{2.5in}{\centering Similarity classes of binary quadratic forms (E, q, L) over \( S \)}
\right\}
\longleftrightarrow
\left\{
\parbox{2.5in}{\centering Isomorphism classes of pairs \( (C, E) \), \\ with \( C \) a quadratic algebra over \( S \), \\ and \( E \) a \textit{traceable} \( C \)-module}
\right\},
\]
which is functorial in $S$.

An isomorphism of pairs $(C , E)$ and $(C', E')$ is given by an isomorphism $C \cong C'$ of $\OS$-algebras, and an isomorphism $E \cong E'$ of $\OS$-modules that respects the $C$ and $C'$ module structures. Here for any binary quadratic form $ q:E \rightarrow L $, the objects $ C_0(E, q, L) $ and $ C_1(E, q, L) $ denote, respectively, the 
degree zero and degree one components of the generalized Clifford algebra associated to the binary quadratic form $ q $.
\end{theorem}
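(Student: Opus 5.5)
The plan is to check that the map $(E,q,L)\mapsto (C_0(E,q,L),C_1(E,q,L))$ is well defined on similarity classes. Then I would construct an explicit inverse on isomorphism classes of pairs and verify that both composites are the identity. Finally, I would read off the discriminant identity and functoriality from earlier results.

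\textbf{Well-definedness.} By Proposition~\ref{induceevenclifford}(a), $C_0(E,q,L)$ is locally free of rank $2^{1}=2$, so it is a quadratic algebra. By Theorem~\ref{cliffordbimoduleunderlyingmodule} and Lemma~\ref{traceablebimodule}, $C_1(E,q,L)\cong E$ is a traceable $C_0(E,q,L)$-module. A similarity $(\varphi,\lambda)$ gives the algebra isomorphism $C_0(\varphi,\lambda)$ of Proposition~\ref{induceevenclifford}(b). It also gives the $C_0(\varphi,\lambda)$-semilinear isomorphism $C_1(\varphi,\lambda)$ of Proposition~\ref{semilinear}(b), and these two together are exactly an isomorphism of pairs. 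The relation $\Delta(q)=-\Delta(C_0(E,q,L))$ is Lemma~\ref{discriminant-comparison}. Functoriality in $S$ follows from the base change isomorphisms in Proposition~\ref{induceevenclifford}(c) and Proposition~\ref{semilinear}(a).

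\textbf{The inverse map.} Let $(C,E)$ be a pair and set $N=(C/\OS)^\vee$, a line bundle by Lemma~\ref{freedirectsummand}. Motivated by Proposition~\ref{quotientclifford}, which gives $C_0/\OS\cong \wedge^2E\otimes L^\vee$, I take $L:=\wedge^2E\otimes N$. I define $q\colon E\to L$ by sending a section $e$ to the homomorphism $C/\OS\to\wedge^2E$ given by $\bar c\mapsto ce\wedge e$.
\begin{itemize}
\item This is well defined, because for $a\in\OS$ we have $ae\wedge e=0$.
\item It satisfies $q(ae)=a^2q(e)$.
\item Its polar form $b_q(e,f)$ corresponds to $\bar c\mapsto ce\wedge f+cf\wedge e$, which is bilinear, so $q$ is a quadratic form by Lemma~\ref{equivalent def}.
\end{itemize}
No choices enter this construction, so $q$ glues globally and commutes with base change.

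To check it locally, take $C=R[\tau]$ and a basis $e_1,e_2$ of $E$. Traceability says that the matrix of $\tau$ on $E$ has trace equal to $\operatorname{tr}_C(\tau)$. Its determinant equals $\tau\bar\tau$ by Cayley–Hamilton, since the characteristic polynomial of $\tau$ on $E$ then coincides with the minimal polynomial of $\tau$ in $C$. Writing $\tau e_1=be_1-ae_2$, the matrix can be normalized after a change of basis, and one recovers the local model of Lemma~\ref{traceablebimodule} with $q=ax^2+bxy+cy^2$, up to a global sign absorbed into the identification of $L$.

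\textbf{Both composites are the identity.}
\begin{itemize}
\item To identify $C_0(E,q,L)$ with $C$, I would define $j\colon E\otimes E\otimes L^\vee\to C$. Using $L^\vee\cong(\wedge^2E)^\vee\otimes (C/\OS)$, $j$ should send $u\otimes v\otimes f$ to a lift in $C$ determined by pairing $u\wedge v$ against $f$. I would then verify the two relations of Proposition~\ref{universalC_0} locally, which are exactly the multiplication table of Lemma~\ref{traceablebimodule}. This yields an algebra map $C_0(E,q,L)\to C$, which is an isomorphism locally and hence globally.
\item Compatibility with the module structure would follow from Proposition~\ref{universalbimodule} applied to $j=\mathrm{id}_E$.
\item Conversely, starting from $(E,q,L)$ and applying the construction to $(C_0,C_1)$ gives $L'=\wedge^2E\otimes(C_0/\OS)^\vee$. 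Proposition~\ref{quotientclifford} identifies $L'$ canonically with $L$, and under this identification the new form is $q$, up to the fixed sign. This gives a similarity $(\mathrm{id}_E,\mu)$.
\item Isomorphic pairs visibly produce similar forms.
\end{itemize}

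\textbf{Main obstacle.} I expect the hardest step to be the global identification $C_0(E,q,L)\cong C$. The difficulty is that $C\to C/\OS$ need not split globally (Remark~\ref{remark:failure_of_splitting}), so $j$ cannot simply land in a complement of $\OS$. I would handle this by defining $j(u\otimes v\otimes f)$ intrinsically as the unique $c\in C$ with $c\cdot(\text{something})$ determined by the action on $E$. Concretely, I would use the rule that $u\otimes v\otimes f$ acts on $w$ by $w\mapsto f(q\text{-pairing})$, imitating $u\cdot v\cdot w$ in the Clifford algebra. Traceability is what should guarantee that this endomorphism lies in the image of $C$, and checking that claim locally is where the real work lies.
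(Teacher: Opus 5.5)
Your overall route is the paper's: the same $L=\wedge^2E\otimes(C/\OS)^\vee$, the same form $q(e)=(\bar c\mapsto ce\wedge e)$, the universal property of $C_0$ to produce $C_0(E,q,L)\to C$, and naturality of the construction for injectivity. Your use of Proposition~\ref{quotientclifford} to recover $(E,q,L)$ from its own Clifford pair actually supplies a step the paper's injectivity argument leaves implicit.

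The gap is in the step you yourself call the crux: the global definition of $j\colon E\otimes E\otimes L^\vee\to C$. Your first bullet only says $j$ lands on ``a lift'' of the class in $C/\OS$, and that ambiguity is exactly the problem. Your proposed remedy is to take $j(u\otimes v\otimes f)$ to be \emph{the unique} $c\in C$ acting on $E$ by a prescribed endomorphism. This fails in the generality of the theorem. Locally $x+y\tau$ acts on $E$ by $\begin{pmatrix} x+yb & yc\\ -ya & x\end{pmatrix}$, so $C\to\mathscr{E}nd_{\OS}(E)$ has kernel $\{y\tau : ya=yb=yc=0\}$. For the zero form, which the theorem explicitly covers, $C=\OS[\tau]/(\tau^2)$ and $\tau$ acts by $0$ on $E$. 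The action then cannot distinguish the required value $\tau$ from $0$, and choosing $0$ gives a non-surjective map. Moreover, you never say which endomorphism is meant, so the claim that traceability puts it in the image of $C$ concerns an unspecified map.

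The missing idea is that the $C$-action on $E$ itself singles out a canonical lift, so the failure of $\OS\to C$ to split is harmless. Use $L^\vee\cong(\wedge^2E)^\vee\otimes C/\OS$, write $f=\phi\otimes\bar\gamma$, and set
\[
j(u\otimes v\otimes\phi\otimes\bar\gamma)=\phi(u\wedge v)\,\gamma-\phi(u\wedge\gamma v)\cdot 1_C ,
\]
where $\gamma$ is any local lift of $\bar\gamma$. Replacing $\gamma$ by $\gamma+t$ with $t\in\OS$ changes both terms by $t\,\phi(u\wedge v)$. Hence $j$ is a global $\OS$-linear map involving no choices.

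The first relation of Proposition~\ref{universalC_0} then holds identically: $j(v\otimes v\otimes f)=\phi(\gamma v\wedge v)=f(q(v))$. Take a local basis with $\tau e_1=be_1-ae_2$ and $\tau e_2=ce_1$, and put $\phi=(e_1\wedge e_2)^\vee$, $\bar\gamma=\bar\tau$. On $(e_1,e_1)$, $(e_1,e_2)$, $(e_2,e_1)$, $(e_2,e_2)$ the values are $a$, $\tau$, $b-\tau$, $c$, which is exactly the local map of the paper's Case~1. The product relation is quadratic in the middle variable, so it reduces to these values and their polarization. It then follows from $\tau^2=b\tau-ac$, and this is where traceability enters. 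This formula replaces the paper's local definition of $\theta$ via local splittings together with an asserted gluing.

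Finally, to invoke Proposition~\ref{universalbimodule} for the module compatibility, you must specify the right action on $E$. It should be $e\cdot c=\sigma(c)\,e$, with $\sigma$ the canonical involution of $C$.
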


\begin{proof}

 For the sake of clarity and rigor, we present the proof as a sequence of interdependent stages.
\subsection*{\textnormal{\textit{Verifying the Well-Definedness of the Map in Theorem \ref{MainTheorem}}}}
% \paragraph{\textbf{Step 1:}} \textbf{Verifying the Well-Definedness of the Map in Theorem \ref{MainTheorem}.}\\
Consider a binary quadratic form $ q \in \Gamma(S, \mathscr{Q}uad(E, L))$. Let $ C = C_0(E, q, L) $ denote the even Clifford algebra associated with $ q $. It is a classical result that $ C $ carries the structure of a quadratic algebra, and $ E $ naturally inherits the structure of a $ C $-module. Furthermore, it follows from Lemma \ref{traceablebimodule} that E is a traceable C-module.

Now, suppose two binary quadratic forms $(E, q, L)$ and $(E', q', L')$ are similar. By Proposition \ref{induceevenclifford} and Proposition \ref{semilinear}, it follows that the associated pairs $(C_0(E, q, L), E)$ and $(C_0(E', q', L'), E')$ are isomorphic as pairs. This gives the well-definedness of the map in the statement of Theorem \ref{MainTheorem}.
\subsection*{\textnormal{\textit{Surjectivity of the Map in Theorem \ref{MainTheorem}}}}
% \paragraph{\textbf{Step 2:}} \textbf{Surjectivity of the Map in Theorem \ref{MainTheorem}.}\\

Conversely, given a quadratic $\OS$-algebra $C$ and a \textit{traceable} $C$-module $E$ we can construct $\OS$-module $L=\wedge^2 E \tesnor (C/\OS)^\vee$ of rank 1 and a quadratic form $(E, q, L)$ such that there are isomorphisms $C_0(E, q, L)\cong C$ as $\OS$-algebras,  $C_1(E, q, L)\cong E$ as $\OS$-modules which are compatible with the $C$-module E and the $C_0(E, q, L)$-module $C_1(E, q, L)$.  We break this proof into two cases :
\subsubsection*{\textnormal{\textit{Case 1: If  $C$ and $E$ are free $\OS$-modules:}}}
    % \paragraph{\textbf{Case 1:}} \textbf{\underline{If  $C$ and $E$ are free $\OS$-modules :}\\
    
In this setting, Lemma \ref{freedirectsummand} allows us to select bases \( \{1, \tau\} \) for \( C \) and \( \{e_1, e_2\} \) for \( E \), respectively. Since \( \tau e_1 \) and \( \tau e_2 \) are elements of \( E \), which is a free \(R\)-module of rank 2, we may express them as  
 \begin{equation}\label{uvw}
\tau e_1=se_1 - te_2\quad\text{and}\quad\tau e_2=de_1+me_2.\end{equation} 
where \( s, t, d, m \in \Gamma(S, \OS)  \).
 This immediately yields the relation  
\[
(\tau - m)e_2 = d e_1.
\]

To facilitate computations, we perform a change of basis in \( C \), replacing \( \{1, \tau\} \) with \( \{1, \tau - m\} \). This transformation corresponds to the change-of-basis matrix  
\[
A =
\begin{pmatrix}
1 & -m \\
0 & 1
\end{pmatrix}.
\]
Thus, without loss of generality, we may assume that  
\begin{equation}\label{abc}
\tau e_1=be_1 - ae_2\quad\text{and}\quad\tau e_2=ce_1\end{equation}
where the new coefficients \( a, b, c \) are elements of \(\Gamma(S, \OS) \), obtained through the basis modification.

 If $\tau^2=m\tau +r$ with $m,r\in\OS$, then the matrices of $\OS$-linear endomorphism given by (left ) multiplication by $\tau$ on $C$ and $E$ are, respectively,
$$\left(\begin{matrix}
0 & r\\
1 & m \end{matrix}\right) ~~~and~~~
\left(\begin{matrix}
b & c\\
-a & 0 \end{matrix}\right).
$$
Then, the \textit{traceability} condition tells us that $m=b$. So we have $\tau^2=b\tau +r$. \\
Now multiplying \eqref{abc} by $\tau$ we get \\
$$\tau^2e_1=b\tau e_1-a\tau e_2$$ 
$$\Rightarrow (b \tau +r)e_1=b \tau e_1 - ace_1 $$\\
Now, comparing the coefficients of $e_1$, we get $r=-ac$. So, finally, we have 
\begin{equation}\label{mnp}
\tau^2=b\tau-ac \qquad \qquad\tau e_1=be_1-ae_2\qquad\qquad\tau e_2=ce_1\end{equation}

For this $(C, E)$, consider the binary quadratic form $q:E\rightarrow \OS$ defined by
\[q(xe_1 + ye_2) =ax^2 + bxy + cy^2,\] where $a,b,c$ are coming from \eqref{mnp} and $x, y\in \Gamma(S, \OS) $.

We claim that $C$ is isomorphic to the even Clifford algebra $C_0(E, q, \OS)$ as $\OS$-algebras. This claim is established through an appeal to the \textbf{universal property of the even Clifford algebra}, as articulated in Proposition \ref{universalC_0}.

For this consider the $\OS$-linear map $\phi: E \otimes E \rightarrow C$ defined by
 
   \[\begin{split} &\phi (e_i \otimes e_i)=q(e_i)\cdot 1_C  \text{~~~~for ~~~$i=1,2$};\\
&\phi (e_1 \otimes e_2)=\tau;\\
&\phi(e_2\otimes e_1)= b_q(e_1, e_2)\cdot 1_C - \tau. \end{split}\]
We need to check $\phi$ satisfies the following conditions:
\begin{equation}\label{stu}
\phi (x\otimes x)=q(x) \cdot 1_C  \hspace{3mm} and \hspace{3mm} \phi (x\otimes y) \cdot \phi (y\otimes z)=q(y) \phi (x \otimes z) \end{equation}
where $x,y,z\in E$. However, it is enough to verify on basis vectors.

Now \[\begin{split}
& \phi (e_1 \otimes e_2) \cdot  \phi (e_2 \otimes e_1)\\
& = \tau \cdot \{b_q (e_1, e_2)- \tau\}\\
& =\tau \cdot \{ q(e_1 + e_2)- q(e_1) - q(e_2) - \tau\}\\
&  = \tau \cdot \{a+ b + c -a - c - \tau \}\\
&  = b \tau - \tau^2\\
& = b \tau -b \tau + ac\\
&  = ac
\end{split}\]

Also, \[q(e_2) \phi (e_1 \otimes e_1)= q(e_2) q(e_1) = ac.\]

Then, by virtue of the universal property of the even Clifford algebra, there exists a unique $ \OS $-algebra morphism
\[
\psi : C_0(E, q, \OS) \to C.
\]
Since every basis vector in $\{1, \tau\}$ admits a preimage under the map $\psi$, it follows that $\psi$ is surjective. Furthermore, $\psi$ is an isomorphism of $\OS$-algebras, as both $C_0(E, q, O_S)$ and $C$ are free $\OS$-modules of the same rank. 
\subsubsection*{\textnormal{\textit{Case 2: If  $C$ or $E$ is not free as $\OS$-modules }}}
% \paragraph{\textbf{Case 2:}} \textbf{ \underline{If  $C$ or $E$ is not free as $\OS$-modules }:}\\  

In this case, for a given $(C, E)$ pair, consider the sheaf \begin{equation}\label{linebundle}
    L=\wedge^2 E\tesnor (C/\OS)^\vee.
    \end{equation}

 That this is a line bundle follows from  Lemma \ref{freedirectsummand}.

% \paragraph{Motivation}

% From Proposition \ref{quotientclifford}, we recall that if $q: E \to L$ is a binary quadratic form, then the quotient sheaf $C_0(E, q, L) / \OS$ is canonically isomorphic to $\wedge^2 E \otimes L^\vee$ as $\OS$-modules.

% This canonical isomorphism provides a natural identification:
% \[
% C / \OS \cong \wedge^2 E \otimes L^\vee.
% \]
% Dualizing both sides yields the relation
% \[
% \wedge^2 E \otimes (C / \OS)^\vee \cong L,
% \]
% which justifies the definition given in \eqref{linebundle}.

To construct a quadratic form, it follows from Lemma \ref{equivalent def} that it suffices to produce an $\OS$-linear morphism  \[\alpha: \mathrm{Sym}_2 E \rightarrow L,\] where $\mathrm{Sym}_2 E$ is the sheafification of the $\OS$-submodule generated by all sections of the form $(v\otimes v)$. Moreover, it is sufficient to define an $\OS$-linear morphism from $ (C/\OS \otimes \mathrm{Sym}_2E) $ to $ \wedge^2E$.

Consider the map $\alpha:C/\OS \otimes \mathrm{Sym}_2E \rightarrow  \wedge^2E$ defined on sections over $U \subseteq S$ by
 \[\alpha_U (\bar{\gamma} \otimes (e_1 e_2))= \gamma e_2 \wedge e_1.\] where $\bar{\gamma}\in \Gamma(U, C/\OS)$ and $e_1, e_2\in \Gamma(U, E).$
 
 This is a well-defined $\OS$-linear morphism so it corresponds to a binary quadratic form $q$; consider that binary form.

We aim to demonstrate that the even Clifford algebra $ C_0(E, q, L) $ associated with the quadratic form $ q $ is isomorphic to $ C $ as $ \mathscr{O}_S $-algebras. To establish this isomorphism, we appeal to the universal property satisfied by the even Clifford algebra $ C_0(E, q, L) $.

To proceed, we fix an open cover $ \{U_i\} $ of $ S $ such that, upon restriction to each $ U_i $, the vector bundle $ E|_{U_i} $ trivializes as $ \mathscr{O}_{U_i} x_i \oplus \mathscr{O}_{U_i} y_i $, and the line bundle $ L|_{U_i} $ trivializes as $ \mathscr{O}_{U_i} z_i $.

On each such open set $ U_i $, we then define a morphism  
\[
\theta_{U_i} \colon E|_{U_i} \otimes E|_{U_i} \otimes L^\vee|_{U_i} \to C|_{U_i}
\]  
by prescribing its action on basis elements, and extend it by $ \mathscr{O}_{U_i} $-linearity:
\[\begin{split} & \theta_{U_i}(x_i \otimes y_i \otimes z_i^\vee) = \left(0, (x_i \wedge y_i) \otimes z_i^\vee \right)\\
&\theta_{U_i} (x_i \otimes x_i \otimes z_i^\vee )= \left( z_i^\vee (q(x_i)),\ 0 \right) ;\\
&\theta_{U_i}(y_i \otimes x_i \otimes z_i^\vee) = \left( z_i^\vee(b_q(x_i, y_i)),\ - (x_i \wedge y_i) \otimes z_i^\vee \right).\end{split}\]

By remark \ref{localsplit} and \eqref{linebundle}, $\theta_{U_i}$ is well-defined. The local morphisms $ \theta_{U_i} $ glue canonically to a global morphism $ \theta \colon E \otimes E \otimes L^\vee \to C $; see \cite[Theorem 3.1]{sohamthesis} for the complete verification.

% Since $\OS$ is a direct summand of , it follows from \eqref{linebundle}  that  
% $$
% C \cong \OS \oplus \wedge^2 E \otimes L^\vee.
% $$
% We want to show that the even Clifford algebra $C_0(E, q, L)$ of that form $q$ is isomorphic to $C$ as  $\OS$-algebras. 
%  To verify that, we consider the $\OS$-linear map $\theta: E \otimes E \otimes L^\vee \rightarrow C$ defined on sections by:
%  \[\begin{split} &\theta (v \otimes v \otimes f)=f(q(v))\cdot 1_C\\
% &\theta (u \otimes v \otimes f)= (u \wedge v) \otimes f ;\\
% &\theta(v \otimes u \otimes f)= f(b_q(u, v))\cdot 1_C - \theta(u \otimes v \otimes f). \end{split}\]

Now, in order to invoke the universal property of the even Clifford algebra as stated in Proposition \ref{universalC_0}, it is necessary to verify whether the map $\theta$ satisfies the following conditions:
\[\begin{split}
& \theta(v\tensor v \tensor f) = f(q(v))\cdot 1_C ~~~~~~~\text{and}\\  
&\theta(u\tensor v \tensor f) \cdot \theta(v \tensor w \tensor g) = 
f(q(v)) \, \theta(u \tensor w \tensor g),
\end{split}\]
Since $\theta \in \Gamma(S, E^\vee \otimes E^\vee \otimes L \otimes C) $ , it is sufficient to check its local behaviour, which is already verified in Case 1.\\
So by the universal property of the even Clifford algebra, there exists a unique $\OS$-algebra morphism
\[\psi: C_0(E, q, L) \rightarrow C.\]
 We have seen that locally (Case 1), $\psi$ is an isomorphism; this implies $\psi$ is an isomorphism.
\subsection*{\textnormal{\textit{Injectivity of the map in Theorem \ref{MainTheorem}}}}
% \paragraph{\textbf{Step 3:}} \textbf{Injectivity of the map in Theorem \ref{MainTheorem}.}\\

It remains to show that the isomorphism classes of $(C, E)$ pairs correspond to similarity classes of binary forms. We will break this proof into two cases:
\subsubsection*{\textnormal{\textit{Case 1: If $C$ and $E$ are free $\OS$-modules:}}}
 % \paragraph{\textbf{Case 1:}}\textbf{\underline{If $C$ and $E$ are free $\OS$-module}:}\\

Let $E = e_1\OS \oplus e_2\OS$ and $C = \OS \oplus \tau\OS$, where $\tau = e_1 \cdot e_2$. The pair $(C, E)$ is associated with a quadratic form $q: E \to \OS$ defined as:
$$
q(xe_1 + ye_2) = ax^2 + bxy + cy^2,
$$
where $x, y, a, b, c \in \OS$, and the relation $\tau^2 = b\tau - ac$ holds.

Now, suppose $(C, E)$ and $(C', E')$ are isomorphic pairs of modules over $S$. This means there exist:
\begin{itemize}
    \item An $\OS$-algebra isomorphism $\varphi: C \to C'$,
    \item An $\OS$-linear isomorphism $\psi: E \to E'$,
\end{itemize}
such that for sections over any open subset $U \to S$, the compatibility condition holds:
$$
\psi_U(c \cdot x) = \varphi_U(c) \cdot \psi_U(x),
$$
where $c \in \Gamma(U, C)$ and $x \in \Gamma(U, E)$.

From this setup, we can write:
$$
E' = \psi_S(e_1)\OS \oplus \psi_S(e_2)\OS,
$$
and
$$
C' = \OS \oplus \psi_S(e_1) \cdot \psi_S(e_2)\OS,
$$
where $e_1, e_2 \in \Gamma(S, E)$. Our goal is to show that:
$$
\varphi_S(\tau) = d \cdot \psi_S(e_1) \cdot \psi_S(e_2),
$$
for some unit $d \in \OS^*$.

% \section*{2. Compatibility of $\varphi$ and $\psi$}

We have already established that $C \cong C_0(q)$ and $C' \cong C_0(q')$, where $C_0(q)$ denotes the even Clifford algebra associated with the quadratic form $q$. To proceed, let us assume:
$$
\varphi_S(e_1 \cdot e_2) = d \cdot \psi_S(e_1) \cdot \psi_S(e_2) + e,
$$
for some $d, e \in \OS$. By the compatibility of $\varphi$ and $\psi$, we must satisfy the following identity on sections:

\begin{equation}\label{god}
    \psi_S((e_1\cdot e_2) e_2)=\phi_S(e_1\cdot e_2) \psi_S(e_2).
    \end{equation}

% \subsection*{Step 1: Simplify $\psi_S((e_1 \cdot e_2)e_2)$}

Using the module structure of $E$ and the definition of $q$, we compute:
$$
\psi_S((e_1 \cdot e_2)e_2) = \psi_S(e_1e_2^2).
$$
Since $e_2^2 = q(e_2)$ by the quadratic form $q$, this becomes:
$$
\psi_S(e_1e_2^2) = \psi_S(e_1q(e_2)).
$$
By linearity of $\psi$, we have:
$$
\psi_S(e_1q(e_2)) = q(e_2)\psi_S(e_1).
$$
So \begin{equation}\label{shiv}
\psi_S((e_1 \cdot e_2)e_2) = q(e_2)\psi_S(e_1)
\end{equation}

% \subsection*{Step 2: Expand $\varphi_S(e_1 \cdot e_2)\psi_S(e_2)$}

Substituting $\varphi_S(e_1 \cdot e_2) = d \cdot \psi_S(e_1) \cdot \psi_S(e_2) + e$ into the right-hand side of \eqref{god} , we get:
$$
\varphi_S(e_1 \cdot e_2)\psi_S(e_2) = \big(d \cdot \psi_S(e_1) \cdot \psi_S(e_2) + e\big)\psi_S(e_2).
$$
Expanding this expression:
$$
\varphi_S(e_1 \cdot e_2)\psi_S(e_2) = d \cdot \psi_S(e_1) \cdot \psi_S(e_2)^2 + e \cdot \psi_S(e_2).
$$
Using the property $\psi_S(e_2)^2 = q'(\psi_S(e_2))$ in $E'$, this becomes:
\begin{equation}\label{bishnu}
\varphi_S(e_1 \cdot e_2)\psi_S(e_2) = d \cdot \psi_S(e_1) \cdot q'(\psi_S(e_2)) + e \cdot \psi_S(e_2).
\end{equation}

% \subsection*{Step 3: Equating (3.6), (3.7), and (3.8)}

From \eqref{god}, \eqref{shiv} and \eqref{bishnu} , we equate the two expressions for $\psi_S((e_1 \cdot e_2)e_2)$:
$$
q(e_2)\psi_S(e_1) = d \cdot q'(\psi_S(e_2))  \psi_S(e_1)  + e \cdot \psi_S(e_2).
$$
Since $\psi_S(e_1)$ and $\psi_S(e_2)$ are linearly independent in $E'$, the coefficients of $\psi_S(e_1)$ and $\psi_S(e_2)$ must separately vanish. This implies:
$$
e = 0.
$$

Thus, we conclude:
$$
\varphi_S(e_1 \cdot e_2) = d \cdot \psi_S(e_1) \cdot \psi_S(e_2).
$$
Now $d \in \OS^*$ follows from the fact that $\varphi_S$ is an isomorphism.

The computation shows that:
$$
\varphi(\tau) = d \cdot \psi(e_1) \cdot \psi(e_2), \text{ where } d \in \OS^*.
$$

For simplicity let us write $\psi(e_1)\cdot \psi(e_2)=\tau'$, then $\phi(\tau)=d\tau'.$

  Now \[ \begin{split}
      & \tau^2-b\tau+ac=0\\
      & \Rightarrow \phi(\tau^2)-b \phi(\tau)+ac=0\\
      & \Rightarrow\phi(\tau)^2-b\phi(\tau)+ac=0\\
      & \Rightarrow d^2\tau'^2-bd\tau'+ac=0\\
      & \Rightarrow \tau'^2-b/d \cdot \tau'+a/d \cdot c/d=0\\
  \end{split}\]

  Consider the binary form $q':E'\rightarrow \OS$ defind by
  \[q(x\psi(e_1)+y\psi(e_2))=a/d\cdot x^2+b/d \cdot xy+c/d\cdot y^2\]
  where $x,y,a,b,c,d\in \OS$ coming from above.

  Then, it is easy to see that the following diagram is commutative.

  \[\begin{tikzcd}[row sep = large, column sep = large]
 E \arrow[d, "\psi"'] \arrow[r, "q"] & \OS \arrow[d, "\cdot 1/d"] \\
 E' \arrow[ur, phantom, "\scalebox{1.5}{$\circlearrowleft$}" description]\arrow[r, "q'"] & \OS
 \end{tikzcd}\]

 This shows that $q$ and $q'$ are similar.
 \subsubsection*{\textnormal{\textit{Case 2: If  $C$ or $E$  are not free as $\OS$-modules:}}}

 % \paragraph{\textbf{Case 2:}} \textbf{\underline{If  $C$ or $E$  is not free as $\OS$-modules }:}\\ 
 
 Let $(C, E)$ and $(C', E')$ are isomorphic pairs i.e., there exist
 \[\begin{split} &\phi:C\rightarrow C', \text{~~~~~~~~~~~~~~~~~$\OS$-algebra isomorphism and}\\
 &\psi:E\rightarrow E', \text{~~~~~~~~~~~~~~~~$\OS$-linear isomorphism}
 \end{split}\]
 such that on  sections $U\subseteq S$, $\psi_U(c\cdot x)=\varphi_U(c)\cdot \psi_U(x)$, where $c\in \Gamma(U,C)$ and $x\in \Gamma(U, E)$.

 Now the pair $(C, E)$ associates to the binary form $q:E\rightarrow L$ corresponding to the $\OS$-linear map $\alpha:C/\OS \otimes \mathrm{Sym}_2E \rightarrow  \wedge^2E$ defined on sections over $U\subseteq S$ by
 \[\alpha_U (\bar{\gamma} \otimes (e_1 e_2))= \gamma e_2 \wedge e_1.\] where $\bar{\gamma}\in \Gamma(U, C/\OS)$, $e_1, e_2\in \Gamma(U, E)$ and $L=(C/\OS)^\vee \otimes \wedge^2E.$

 Similarly, the pair $(C', E')$ associates to the binary form $q':E'\rightarrow L'$ corresponding to the $\OS$-linear map $\beta:C'/\OS \otimes \mathrm{Sym}_2E' \rightarrow  \wedge^2E'$ defined on sections over $U\subseteq S$ by
 \[\beta_U (\bar{\gamma'} \otimes (e_1' e_2'))= \gamma' e_2' \wedge e_1'.\] where $\bar{\gamma}\in \Gamma(U, C'/\OS)$, $e_1' , e'_2\in \Gamma(U, E')$ and $L'=(C'/\OS)^\vee\otimes \wedge^2 E'.$

\[\begin{tikzcd}[row sep = large, column sep = large]
 C/\OS \otimes \mathrm{Sym}_2E \arrow[d, "\bar{\phi}\otimes \mathrm{Sym}_2(\psi)"'] \arrow[r, "\alpha"] & \wedge^2E \arrow[d, "\wedge^2\psi"] \\
 C'/\OS \otimes \mathrm{Sym}_2E' \arrow[ur, phantom, "\scalebox{1.5}{$\circlearrowleft$}" description]\arrow[r, "\beta"] & \wedge^2E'
 \end{tikzcd}\]

The aforementioned diagram exhibits commutativity due to the following sequence of equalities:
\[
\begin{aligned}
(\wedge^2 \psi \circ \alpha) (\overline{\gamma} \otimes e_1 e_2) &= \wedge^2 \psi(\gamma e_2 \wedge e_1) \\
&=  \psi(\gamma e_2) \wedge \psi(e_1) \\
&= \phi(\gamma) \psi(e_2) \wedge \psi(e_1).
\end{aligned}
\]
Furthermore, consider the composition:
\[
\begin{aligned}
\beta \circ (\overline{\varphi} \otimes \mathrm{Sym}_2(\psi))(\overline{\gamma} \otimes e_1 e_2) &= \beta(\overline{\varphi}(\overline{\gamma}) \otimes \psi(e_1) \psi(e_2)) \\
&= \phi(\gamma) \psi(e_2) \wedge \psi(e_1).
\end{aligned}
\]

Using the Hom–Tensor adjunction property, we obtain the following commutative diagram.

\[\begin{tikzcd}[row sep = large, column sep = large]
 \mathrm{Sym}_2E \arrow[d, "\mathrm{Sym}_2(\psi)"'] \arrow[r, "q"] & \mathscr{H}om(C/\OS, \wedge^2E)=L \arrow[d, "\wedge^2 \psi \circ  - \circ \bar{\phi}^{-1}"] \\
 \mathrm{Sym}_2E' \arrow[ur, phantom, "\scalebox{1.5}{$\circlearrowleft$}" description]\arrow[r, "q'"] & \mathscr{H}om(C'/\OS, \wedge^2E')=L'
 \end{tikzcd}\]

Since \(\varphi\) and \(\psi\) are isomorphisms, the composition \(\left(\wedge^2 \psi \circ - \circ \overline{\varphi}^{-1} \right)\) induces an $\OS$-module isomorphism between \(L\) and \(L'\). Consequently, the quadratic forms \(q\) and \(q'\) are similar.
 \subsection*{\textnormal{\textit{Discriminant--preserving property(up to sign):}}}
In Lemma \ref{discriminant-comparison}, we established the identity  
$$
\Delta(q) = -\Delta(C_0(E, q, L)).
$$  
This confirms that the bijection described above is discriminant-preserving up to sign.

Hence, the proof is complete.
\end{proof}
\vspace{.05in}
\begin{rmk}
    Here, we provide the rationale behind the conditions assumed in Theorem \ref{MainTheorem}. One might naturally be inclined to ask whether the similarity classes of binary quadratic forms can be completely characterized by their associated even Clifford algebras.  
 Indeed, in the case of ternary quadratic forms---those defined on vector bundles of rank 3---this is known to hold true, as demonstrated in \cite{tevbal} and \cite{voightcharacterizing}. However, we provide an example illustrating that the mere isomorphism of even Clifford algebras is insufficient to guarantee the similarity of two binary quadratic forms. See \cite[Example A.1, Appendix A]{sohamthesis} for a detailed construction.

    For any binary quadratic form $ q $, the degree zero part of the Clifford algebra, denoted $ C_0(q) $, constitutes a quadratic algebra, while the degree one part of the Clifford algebra, $ C_1(q) $, naturally assumes the structure of a traceable $ C_0(q) $-module (as established in Lemma \ref{traceablebimodule}). Furthermore, a key proposition from Max-Albert Knus’s book \textit{Quadratic and Hermitian Forms over Rings} (\cite[Proposition 7.1.1, Chapter IV, §7]{knus}) asserts that if two binary quadratic forms $ q $ and $ q' $ are similar, then there exists a unique isomorphism between $ C_0(q) $ and $ C_0(q') $ as $R$-algebras. Moreover, there is a unique $R$-module isomorphism between $ C_1(q) $ and $ C_1(q') $ that respects the respective $ C_0(q) $ and $ C_0(q') $-module structures.
\end{rmk}

\vspace{.05in}
\begin{rmk}
It is common in the literature to define the discriminant of a binary quadratic form $ q(x, y) = ax^2 + bxy + cy^2 $, locally, as $ \Delta(q) = b^2 - 4ac $; see, for instance, \cite{highcomp}. However, in this paper, we adopt the alternative convention $ \Delta(q) = 4ac-b^2$. As a consequence, the correspondence established in Theorem~\ref{MainTheorem} preserves the discriminant only up to sign.

Had we instead chosen the classical definition $ \Delta(q) = b^2 - 4ac $, the correspondence would yield an exact preservation of the discriminant. 
\end{rmk}

\vspace{.05in}
\begin{corollary}

 Let \( q_1: E_1 \to L_1 \) and \( q_2: E_2 \to L_2 \) be two binary quadratic forms over a scheme \( S \). If there exist:

\begin{enumerate}
    \item An isomorphism between \( C_0(E_1, q_1, L_1) \) and \( C_0(E_2, q_2, L_2) \) as \( \OS \)-algebras, and
    \item An isomorphism between \( E_1 \) and \( E_2 \) as \( \OS \)-modules, compatible with the \( C_0(E_1, q_1, L_1) \)-module (or \( C_0(E_2, q_2, L_2) \)-module) structure,
\end{enumerate}

then \( q_1 \) and \( q_2 \) are similar. Conversely, the similarity of \( q_1 \) and \( q_2 \) implies the existence of such isomorphisms.

\end{corollary}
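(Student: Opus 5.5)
This corollary is the injectivity half of Theorem~\ref{MainTheorem} applied to the Clifford pairs of $q_1$ and $q_2$, and the converse is the well-definedness half. The plan is to reduce both directions to the main theorem via the canonical identification $C_1(E_i,q_i,L_i)\cong E_i$ of Theorem~\ref{cliffordbimoduleunderlyingmodule}.

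\emph{Forward direction.} Identify $E_i$ with $C_1(E_i,q_i,L_i)$, so that $E_i$ carries its $C_0(E_i,q_i,L_i)$-module structure. Then hypotheses (1) and (2) say exactly that the pairs $(C_0(E_1,q_1,L_1),E_1)$ and $(C_0(E_2,q_2,L_2),E_2)$ are isomorphic in the sense of Theorem~\ref{MainTheorem}. By Lemma~\ref{traceablebimodule} each $E_i$ is a traceable $C_0$-module, so both pairs lie in the target set of the bijection. The map in Theorem~\ref{MainTheorem} sends the similarity class of $q_i$ to the isomorphism class of its pair, and that map is injective. Hence $q_1$ and $q_2$ are similar.

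\emph{Converse.} Let $(\varphi,\lambda)$ be a similarity from $q_1$ to $q_2$. Proposition~\ref{induceevenclifford}(b) gives the algebra isomorphism $C_0(\varphi,\lambda)$. Proposition~\ref{semilinear}(b) gives the $C_0(\varphi,\lambda)$-semilinear isomorphism $C_1(\varphi,\lambda)$. Transporting the latter through $C_1\cong E_i$ yields an $\OS$-isomorphism $E_1\to E_2$ compatible with the module structures.

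\emph{Main obstacle.} The delicate step is the identification of $E_i$ with $C_1$ and its compatibility with similarities. One must check that, under Theorem~\ref{cliffordbimoduleunderlyingmodule}, $C_1(\varphi,\lambda)$ corresponds to $\varphi$ itself, or at least to some isomorphism compatible with the modules. I would verify this through the universal property in Proposition~\ref{universalbimodule}, applied to the map $i\circ\varphi$. Since the corollary only asks for the existence of such isomorphisms, any compatible isomorphism is enough.
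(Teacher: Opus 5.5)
Your proposal is correct and follows the paper's argument, which simply cites the well-definedness step of Theorem~\ref{MainTheorem} (similar forms give isomorphic Clifford pairs via Propositions~\ref{induceevenclifford} and~\ref{semilinear}) together with its injectivity step. Your extra check on transporting $C_1(\varphi,\lambda)$ through the identification $C_1(E_i,q_i,L_i)\cong E_i$ is harmless but not needed, since only the existence of some compatible isomorphism is required.
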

\begin{proof}
    This follows  from Step 1 and Step 3 of the previous theorem.
\end{proof}

Now we want to classify isometry classes of binary forms.
\vspace{.05in}
\begin{corollary}

For any \( L \in \mathrm{Pic}(S) \), two binary quadratic forms \( q_1: E_1 \to L \) and \( q_2: E_2 \to L \) are isometric if and only if the pairs \( (C_0(E_1, q_1, L), E_1) \) and \( (C_0(E_2, q_2, L), E_2) \) are isomorphic pair, subject to the following conditions:
\[
(C_0(E_1, q_1, L)/\OS)^\vee \otimes \wedge^2 E_1 = L = (C_0(E_2, q_2, L)/\OS)^\vee \otimes \wedge^2 E_2,
\]
and
\[
\bar{\phi}^{-1}\otimes \wedge^2 \psi =Id,
\]
where \( \varphi \) and \( \psi \) are the isomorphisms related to the corresponding pairs.
\end{corollary}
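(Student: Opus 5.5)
The plan is to extract this from the injectivity argument (Case 2) of Theorem~\ref{MainTheorem}, keeping track of the line bundle isomorphism instead of discarding it. An isometry is exactly a similarity $(\psi,\mu)$ with $\mu=\mathrm{Id}_L$. So the task reduces to identifying, under the correspondence of Theorem~\ref{MainTheorem}, which isomorphisms of pairs come from similarities whose scalar part is the identity on $L$.

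For the forward direction, let $\psi\colon (E_1,q_1,L)\to(E_2,q_2,L)$ be an isometry. By Proposition~\ref{induceevenclifford}(b) and Proposition~\ref{semilinear}(b), $(\psi,\mathrm{Id}_L)$ induces an algebra isomorphism $\varphi=C_0(\psi,\mathrm{Id})$ together with the compatible module isomorphism $\psi$ on $C_1\cong E$ (Theorem~\ref{cliffordbimoduleunderlyingmodule}). So the pairs are isomorphic. Next, Proposition~\ref{quotientclifford} gives canonical identifications $C_0(E_i,q_i,L)/\OS\cong\wedge^2E_i\otimes L^\vee$. Dualizing and tensoring with $\wedge^2E_i$ gives the displayed identifications $(C_0/\OS)^\vee\otimes\wedge^2E_i=L$. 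I would then check that $\bar\varphi$ corresponds, under these identifications, to $\wedge^2\psi\otimes\mathrm{Id}_{L^\vee}$. This is a local computation on generators: locally $\varphi(e_1e_2)=\psi(e_1)\psi(e_2)$, and $e_1e_2$ maps to $(e_1\wedge e_2)\otimes f$. It follows that the induced map on $L$, namely $\bar\varphi^{-1\vee}\otimes\wedge^2\psi$ (written $\bar\varphi^{-1}\otimes\wedge^2\psi$ in the statement), is the identity.

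For the converse, take an isomorphism of pairs $(\varphi,\psi)$ satisfying the two conditions. By the surjectivity part of Theorem~\ref{MainTheorem}, each pair determines a form $q_i'$ with values in $(C_0/\OS)^\vee\otimes\wedge^2E_i$. I would first check that $q_i'$ agrees with $q_i$ under the first identification. This is again local: with the basis of Case~1 one has $\tau=e_1e_2$, and the relations $\tau e_2=q(e_2)e_1$ and so on recover $a,b,c$. The commutative square in Case~2 of the injectivity proof then shows that $\psi$ is a similarity whose multiplier is $\wedge^2\psi\circ-\circ\bar\varphi^{-1}$, which is exactly the map in the second hypothesis. Since that map equals $\mathrm{Id}_L$, $\psi$ is an isometry $q_1\to q_2$.

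The main obstacle is the compatibility step: showing that the form reconstructed from the pair $(C_0(E,q,L),E)$ via $\alpha(\bar\gamma\otimes e_1e_2)=\gamma e_2\wedge e_1$ coincides exactly with $q$ under the canonical identification of Proposition~\ref{quotientclifford}, not merely up to a unit or a sign. I would settle this by a direct local computation in the free case, using the relations \eqref{mnp}, and then glue, since both sides are canonical morphisms of sheaves.
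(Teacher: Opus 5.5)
Your proposal is correct and follows the paper's route. One direction comes from the functoriality of $C_0$ and $C_1$ under similarities (Propositions~\ref{induceevenclifford} and~\ref{semilinear}, the scheme version of Knus's Proposition~7.1.1 that the paper cites). The other comes from the injectivity argument (Case~2) of Theorem~\ref{MainTheorem}, with the multiplier $\wedge^2\psi\circ-\circ\bar\varphi^{-1}$ forced to be $\mathrm{Id}_L$.

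Your two extra checks are left implicit in the paper, and both go through. First, for an isometry $\bar\varphi$ is $\wedge^2\psi\otimes\mathrm{Id}_{L^\vee}$. Second, the reconstructed form $\bar\gamma\mapsto\gamma v\wedge v$ is exactly $q$ under the canonical identification, since locally $\tau v\wedge v=q(v)\,e_1\wedge e_2$.
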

\begin{proof}

    The "if" part follows from \cite[Chapter IV, Proposition 7.1.1, \S7]{knus}. The converse follows from the Theorem \ref{MainTheorem}.
\end{proof}
\vspace{.05in}
\begin{definition}
We introduce the term \emph{Clifford pair} for the pair $(C_0(E, q, L), C_1(E, q, L))$ associated with the binary quadratic form $(E, q, L)$.
\end{definition}

\begin{corollary}
     A Wood pair is a Clifford pair, and conversely, a Clifford pair is a Wood pair, over an arbitrary base scheme. 
 \end{corollary}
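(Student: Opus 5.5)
The plan is to read the statement as saying that the class of pairs $(C,E)$ with $C$ a quadratic $\OS$-algebra and $E$ a traceable $C$-module coincides with the class of pairs arising as $(C_0(E,q,L),C_1(E,q,L))$. A Wood pair is exactly the former: a quadratic algebra together with a traceable module, which in Wood's setting is locally free of rank $2$ over $\OS$. The argument then splits into two inclusions, both essentially contained in Theorem~\ref{MainTheorem}.

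\emph{Clifford $\Rightarrow$ Wood.} Let $(E,q,L)$ be a binary quadratic form over $S$.
\begin{enumerate}
\item Proposition~\ref{induceevenclifford}(a) with $n=2$ shows that $C_0(E,q,L)$ is locally free of rank $2^{n-1}=2$. Hence it is a quadratic $\OS$-algebra.
\item By Theorem~\ref{cliffordbimoduleunderlyingmodule}, $C_1(E,q,L)\cong E$ is locally free of rank $2$.
\item Lemma~\ref{traceablebimodule} shows that $C_1(E,q,L)$ is traceable as a left $C_0(E,q,L)$-module.
\end{enumerate}
Together these give a Wood pair. Commutativity of $C_0$ is needed to speak of a double cover $\Spec C_0\to S$. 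It holds automatically, since locally $C_0=\OS[\tau]$ is generated by a single element.

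\emph{Wood $\Rightarrow$ Clifford.} Let $(C,E)$ be a Wood pair. I will use the surjectivity part of the proof of Theorem~\ref{MainTheorem}.
\begin{enumerate}
\item Set $L=\wedge^2E\otimes(C/\OS)^\vee$. This is a line bundle by Lemma~\ref{freedirectsummand}.
\item Define $q$ through the map $C/\OS\otimes\mathrm{Sym}_2E\to\wedge^2E$, $\bar\gamma\otimes e_1e_2\mapsto \gamma e_2\wedge e_1$, and apply Lemma~\ref{equivalent def}.
\item Construct the algebra isomorphism $\psi\colon C_0(E,q,L)\xrightarrow{\sim}C$ via the universal property, Proposition~\ref{universalC_0}.
\end{enumerate}

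It remains to check that $E$, with its given $C$-action, matches $C_1(E,q,L)$ with its $C_0$-action transported along $\psi$. Both are $E$ as $\OS$-modules by Theorem~\ref{cliffordbimoduleunderlyingmodule}, so it suffices to compare the two left actions of $C$ on $E$. This is local, and I will check it on a trivializing cover using the normal form \eqref{mnp}: $\tau e_1=be_1-ae_2$, $\tau e_2=ce_1$, with $\tau\mapsto e_1e_2$. In the Clifford module one has
\[
(e_1e_2)e_1=e_1(b_q(e_1,e_2)-e_1e_2)=be_1-ae_2,
\]
and similarly $(e_1e_2)e_2=ce_1$. These agree with the given action.

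This local agreement of the module structures, and its compatibility with gluing, is the only step not literally stated earlier, and I expect it to be the main obstacle. Gluing should follow because both actions are defined intrinsically: one by the universal property of $C_1$ in Proposition~\ref{universalbimodule}, the other by the global action on $E$.
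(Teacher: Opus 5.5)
Your proposal is correct and follows the same route as the paper: Clifford $\Rightarrow$ Wood via Lemma~\ref{traceablebimodule}, and Wood $\Rightarrow$ Clifford via the surjectivity part of the proof of Theorem~\ref{MainTheorem}. Your local check that the $C$-action on $E$ matches the $C_0$-action on $C_1$ under $\psi$ supplies a step the paper leaves implicit. It is an equality of two globally defined morphisms, so verifying it on a trivializing cover is enough and no separate gluing argument is needed.
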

\begin{proof}
    In Theorem~\ref{MainTheorem}, we have shown that, over a base scheme $ S $, every Wood’s pair $ (C, E) $, where $ C $ is a quadratic $ \OS $-algebra and $ E $ is a traceable $ C $-module, arises as a Clifford pair $ (C_0(E, q, L), C_1(E, q, L)) $, for some binary quadratic form $ q \colon E \to L $ over $ S $.

Conversely, if $ (C_0(E, q, L), C_1(E, q, L)) $ is a Clifford pair, then by Lemma~\ref{traceablebimodule} it also satisfies the conditions defining a Wood’s pair.

Therefore, when $ S $ is an arbitrary scheme, the notions of Wood’s pair and Clifford pair coincide: each can be naturally identified with the other, and the correspondence is an equivalence of categories.
\end{proof}

\vspace{.05in}
\begin{rmk}
Given a binary quadratic form $ q \colon E \to L $ in the classical sense, there is, \emph{a priori}, no canonical method to construct a binary quadratic form on the dual module $ E^\vee $ within the same framework. However, by employing Wood's construction together with the main result of this paper, Theorem~\ref{MainTheorem}, we are able to establish such a construction in a natural and well-defined manner. We shall come back to this in Theorem \ref{duality}.
\end{rmk}

In order to study the behavior of primitive binary quadratic forms in a geometric setting, we now restrict the bijection of Theorem~\ref{MainTheorem} to such forms defined over an arbitrary base scheme $ S $.
\vspace{.05in}
\begin{definition}[\textbf{Primitive  Form}]\label{primidef}
A quadratic form $ q: E \to L $ is said to be locally primitive at a point $ p \in S $ if there exists an open neighborhood $ U $ of $ p $ such that the restrictions $ E|_U $ and $ L|_U $ are trivial as vector bundles, and the ideal 
\[
(q|_U)
\]
generated by the values of the restriction of $ q $ to $ U $ coincides with the entire structure sheaf $ \mathscr{O}_U $. In other words, we require that
\[
(q|_U) = \mathscr{O}_U.
\]

The form $ q $ is called primitive if this condition holds for every point $ p \in S $.
\end{definition}

\vspace{.05in}
\begin{proposition} \label{primitive}
A binary form $ q : E \to L $ is a primitive binary form if and only if $ E $ is a locally free $ C_0(E, q, L) $-module of rank 1.
\end{proposition}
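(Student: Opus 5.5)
Both conditions are local on $S$: primitivity is local by definition, and being locally free of rank $1$ over $C = C_0(E,q,L)$ can be tested on a cover. So we will work on an affine open $U=\Spec R$ on which $E$ has a basis $\{e_1,e_2\}$ and $L$ is trivial. There $q(xe_1+ye_2)=ax^2+bxy+cy^2$, and, as in Lemma~\ref{traceablebimodule}, $C=R\oplus R\tau$ with
\[
\tau^2=b\tau-ac,\qquad \tau e_1=be_1-ae_2,\qquad \tau e_2=ce_1 .
\]
The ideal generated by the values of $q$ is $(a,b,c)$, because $q(e_1)=a$, $q(e_2)=c$ and $q(e_1+e_2)-a-c=b$. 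Hence primitivity on $U$ means $(a,b,c)=R$. Both conditions also commute with localization at primes (Proposition~\ref{induceevenclifford}(c)), and the values of $q$ generate the unit ideal iff they do so at every prime. It therefore suffices to prove the following: for $R$ local, $(a,b,c)=R$ iff $E\cong C$ as $C$-modules. A free module over $C_{\mathfrak p}$ spreads out to freeness on a neighbourhood, since $E$ is finitely presented.

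\emph{Primitive $\Rightarrow$ free of rank $1$.} Let $R$ be local with maximal ideal $\mathfrak m$, and suppose one of $a,b,c$ is a unit. Look for $v\in E$ such that $\{v,\tau v\}$ is an $R$-basis of $E$; then $C\to E$, $\gamma\mapsto\gamma v$, is an isomorphism. For $v=xe_1+ye_2$ we have $\tau v=(bx+cy)e_1-axe_2$, so
\[
\det(v,\tau v)=-ax^2-bxy-cy^2=-q(x,y).
\]
The task is to find $(x,y)$ with $q(x,y)$ a unit. If $a$ is a unit take $(1,0)$; if $c$ is a unit take $(0,1)$; otherwise $b$ is a unit and $a,c\in\mathfrak m$, and $(1,1)$ gives $a+b+c$, which is a unit. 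This is the familiar fact that a primitive form primitively represents a unit over a local ring, and it costs nothing here because we may pass to the local ring.

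\emph{Free of rank $1$ $\Rightarrow$ primitive.} Suppose $E\cong C$ locally, and suppose for contradiction that $(a,b,c)\subseteq\mathfrak m$ for some prime. Reduce modulo $\mathfrak m$, which is allowed since everything base changes. Over the field $k$ the form $q$ becomes $0$, so $C\otimes k=k[\tau]/(\tau^2)$ and $\tau$ acts on $E\otimes k$ by the matrix of $a,b,c$ reduced, i.e.\ by zero. But $\tau$ acts nontrivially on $C\otimes k$, since $\tau\cdot1=\tau\neq0$. Hence $E\otimes k\not\cong C\otimes k$, a contradiction. Put more intrinsically: $E$ is cyclic over $C$ iff $\tau$ is not a scalar on each fibre, iff $q$ does not vanish identically on any fibre.

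The main obstacle is the bookkeeping that makes the local reduction legitimate and coordinate-free. Concretely, one has to check three things: that the identification $C_1(E,q,L)\cong E$ of Theorem~\ref{cliffordbimoduleunderlyingmodule} is compatible with the local module structure used above, that "locally free of rank $1$ over $C$" may be checked stalkwise, and that the choice of trivialization of $L$ does not affect $(a,b,c)$. Only the last is immediate, since rescaling by a unit preserves the ideal $(a,b,c)$. The first step I would write out carefully is the stalkwise spreading-out of the isomorphism $C_{\mathfrak p}\cong E_{\mathfrak p}$.
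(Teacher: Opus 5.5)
Your proof is correct and follows the paper's route: by locality, reduce to an affine open where $E$ and $L$ are trivial, then handle the ring case. The only difference is that the paper cites Kneser (and the thesis) for that ring case, whereas you prove it directly from the identity $\det(v,\tau v)=-q(v)$. That identity also gives the converse at once: if $E|_U=Cv$, then $\{v,\tau v\}$ is a basis, so $q(v)$ is a unit.
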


\begin{proof}
Over an arbitrary commutative ring $ R $, the proof for primitive binary quadratic forms taking values in $ R $ itself is provided in \cite{KNESER}. As Definition~\ref{primidef} makes evident, primitivity is a local condition; consequently, the arguments presented in \cite{KNESER} extend naturally to the case of primitive forms taking values in line bundles over an arbitrary base scheme $ S $. A detailed treatment of this generalization can be found in \cite[Proposition~4.14]{sohamthesis}.
\end{proof}
\vspace{.05in}
 \begin{lemma}\label{locally free traceable}
Any locally free rank 1 $C$-module is a \textit{traceable} $C$-module.
\end{lemma}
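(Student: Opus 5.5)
Since traceability is a local condition on $S$ (both trace maps are morphisms of sheaves $C\to\OS$, so it suffices to check agreement on an open cover), the plan is to reduce at once to the affine, free case. Let $E$ be a locally free rank 1 $C$-module. By definition, every point of $S$ has an open neighbourhood $U=\Spec R$ on which $E|_U\cong C|_U$ as $C|_U$-modules. After shrinking $U$ we may also assume $C|_U$ is free as an $\OU$-module, say with basis $\{1,\tau\}$ as in Lemma \ref{freedirectsummand}. Then $E|_U$ is free of rank 2 over $\OU$ as well, with basis the image of $\{1,\tau\}$ under the isomorphism. This gives condition (1) of the definition of a traceable module.

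For condition (2), I will use the $C|_U$-linear isomorphism $\beta\colon C|_U\to E|_U$. For every section $c$, the operator of left multiplication by $c$ on $E|_U$ is $\beta\circ\mu_C(c)\circ\beta^{-1}$. Conjugate $R$-linear endomorphisms of a free rank 2 module have equal traces, so $\operatorname{Tr}_E(c)=\operatorname{Tr}_C(c)$ on $U$. These local equalities glue to the required equality of trace maps globally. Concretely, in the basis $\{1,\tau\}$ with $\tau^2=m\tau+r$, the matrix of $\tau$ on both modules is $\begin{pmatrix}0&r\\1&m\end{pmatrix}$, and the traces agree by linearity.

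The only point needing care is the precise meaning of ``locally free of rank 1 $C$-module.'' If it means local triviality on opens of the double cover $\Spec C$ rather than of $S$, I would first pass to an affine open $U\subseteq S$. Over this $U$, $C(U)$ is a finite $R$-algebra, and $E(U)$ is a finitely generated projective $C(U)$-module of rank 1. I would then reduce to local rings of $R$: for each prime $\mathfrak p$ of $R$, the ring $C_{\mathfrak p}$ is semilocal, so a rank 1 projective $C_{\mathfrak p}$-module is free. Spreading this freeness out to a neighbourhood of $\mathfrak p$ gives back the setting of the first step. This reduction is the main, though mild, obstacle; the trace computation itself is immediate.
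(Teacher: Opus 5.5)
Your proposal is correct and follows the paper's own argument: it reduces locally to the case $E|_U\cong C|_U$ as $C|_U$-modules, where left multiplication on $E$ is conjugate to left multiplication on $C$, so the two trace maps coincide. Your semilocal reduction handles local freeness on the double cover rather than on $S$, and you check condition (1) explicitly; both fill in points the paper passes over when it simply assumes the module is isomorphic to $C$ over an open subset of $S$.
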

\begin{proof}

As traceability is a property that can be verified locally, we may work under the assumption that the module is isomorphic to $ C $ over a suitable open subset. In such a case, the canonical algebra multiplication on $ C $ agrees with its structure as a module over itself. It follows that the corresponding trace maps are identical without further intervention.
\end{proof}
\vspace{.05in}
\begin{theorem}\label{classifi1}
We can restrict the bijection of Theorem~\ref{MainTheorem} to a bijection    
\[
\left\{
\parbox{2.5in}{\centering Similarity classes of primitive binary quadratic forms (E, q, L) over \( S \)}
\right\}
\longleftrightarrow
\left\{
\parbox{2.5in}{\centering Isomorphism classes of pairs \( (C, E) \), \\ with \( C \) a quadratic algebra over \( S \), \\ and \( E \) a locally free rank 1 \( C \)-module}
\right\}.
\]
\end{theorem}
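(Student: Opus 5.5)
The plan is to restrict the bijection of Theorem~\ref{MainTheorem} and check that it carries the subset of primitive forms exactly onto the subset of pairs $(C,E)$ with $E$ locally free of rank $1$ over $C$. Since Theorem~\ref{MainTheorem} is already a bijection on similarity classes versus isomorphism classes of pairs, it suffices to prove three things. First, the two subsets are well defined on classes. Second, the forward map sends primitive forms into the right-hand subset. Third, every pair in the right-hand subset comes from a primitive form. Injectivity of the restriction is then automatic.

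For the first point, I would check that primitivity is invariant under similarity. If $(\varphi,\mu_\varphi)\colon (E,q,L)\to(E',q',L')$ is a similarity, then locally on a trivializing open $U$ we have $q'(\varphi(v))=\mu_\varphi(q(v))$ with $\mu_\varphi$ an isomorphism $\mathscr{O}_U\to\mathscr{O}_U$, i.e.\ multiplication by a unit. Hence the ideals $(q|_U)$ and $(q'|_U)$ coincide. On the other side, being locally free of rank $1$ over $C$ is clearly preserved by an isomorphism of pairs, since the module isomorphism is semilinear along an algebra isomorphism $C\cong C'$.

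For the second and third points I would use Proposition~\ref{primitive} directly. If $q$ is primitive, then $E=C_1(E,q,L)$ (Theorem~\ref{cliffordbimoduleunderlyingmodule}) is locally free of rank $1$ over $C_0(E,q,L)$, so the image lands in the desired subset. Conversely, suppose $(C,E)$ has $E$ locally free of rank $1$ over $C$. By Lemma~\ref{locally free traceable}, $E$ is traceable, so $(C,E)$ lies in the target of Theorem~\ref{MainTheorem}. Its surjectivity part then produces a binary form $(E,q,L)$ together with isomorphisms $C_0(E,q,L)\cong C$ and $C_1(E,q,L)\cong E$ that are compatible with the module structures. Transporting along these isomorphisms, $E$ is a locally free rank~$1$ $C_0(E,q,L)$-module, and Proposition~\ref{primitive} gives that $q$ is primitive.

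The only delicate point is this last transport step. Proposition~\ref{primitive} concerns the specific module structure of $E$ over its own even Clifford algebra, so I need the isomorphism $C_1(E,q,L)\cong E$ produced in the surjectivity proof to be $C$-semilinear along $C_0(E,q,L)\cong C$. That is exactly what Theorem~\ref{MainTheorem} asserts, and local freeness of rank $1$ transfers under such a compatible pair of isomorphisms. Functoriality in $S$ and the discriminant relation are inherited from Theorem~\ref{MainTheorem}.
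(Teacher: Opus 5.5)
Your proposal is correct and follows essentially the same route as the paper, whose proof just cites Proposition~\ref{primitive}, Lemma~\ref{locally free traceable}, and Theorem~\ref{MainTheorem}. You make explicit the details that this citation leaves implicit: primitivity is invariant under similarity, local freeness of rank $1$ is preserved by isomorphisms of pairs, and local freeness transfers along the compatible isomorphisms $C_0(E,q,L)\cong C$ and $C_1(E,q,L)\cong E$.
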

\begin{proof}
    The result follows from a combination of Proposition~\ref{primitive}, Lemma~\ref{locally free traceable}, and Theorem~\ref{MainTheorem}.
\end{proof}

\section{Classification of Primitive Binary Quadratic Forms using Universal Norm Form}

While Theorem~\ref{classifi1} provides a classification of primitive binary quadratic forms over an arbitrary base scheme $ S $—based on the results of Theorem~\ref{MainTheorem}—we present here an alternative approach that generalizes the treatment of composition and structure developed by Kneser \cite{KNESER} and further refined by Bichsel and Knus \cite{bichselknus}. In \cite{bichselknus}, Bichsel and Knus provided a classification of non-degenerate, primitive binary quadratic forms over an arbitrary ring $R$, taking values in an invertible $R$-module by means of the universal norm form (see \cite[Example 4.2]{bichselknus}). In the present work, we extend and generalize their framework in several essential directions. Specifically, we consider: 
\begin{enumerate}
    \item A general base scheme $S$, rather than an affine base;
    \item Binary quadratic forms taking values in arbitrary line bundles over $S$; and
    \item Quadratic forms that are not necessarily non-degenerate—that is, we allow for possibly degenerate forms.
\end{enumerate}

One of the principal objectives of this work is to generalize Gauss composition to the setting of primitive binary quadratic forms defined over an arbitrary base scheme. By employing the following alternative construction, we not only succeed in extending the classical Gauss composition law to this broader geometric context, but we also gain greater conceptual clarity regarding the behaviour of the composition operation. Specifically, this approach makes it transparent that when two primitive binary quadratic forms $(E_1, q_1, L_1)$ and $(E_2, q_2, L_2)$, defined over a common base scheme, are composed, the resulting form naturally takes values in the tensor product of the respective line bundles, namely $L_1 \otimes L_2$. This structural insight---regarding the line bundle associated with the composed form---emerges naturally from our framework. In contrast, such a property is not immediately evident when using the method presented in Theorem~\ref{MainTheorem}, which, while valid, lacks the same level of geometric naturality. For a detailed exposition and proof of this result, we refer the reader to Proposition~\ref{compgauss}.

Let
\begin{itemize}
    \item \( S \) be a scheme,
    \item \( C \) be a quadratic  \( \mathscr{O}_S \)-algebra with a norm map
    \[
        n_C : C\longrightarrow \mathscr{O}_S,
    \]
    \item \( E \) be a left \( C \)-module (i.e., a quasi-coherent \( \mathscr{O}_S \)-module with a left \( C \)-action),
    \item \( J \) be a quasi-coherent \( \mathscr{O}_S \)-module.
\end{itemize}
\vspace{.05in}
\begin{definition}
    A morphism of \( \mathscr{O}_S \)-modules
\[
    q: E \longrightarrow J
\]
is called a norm form (with values in \( J \)) if it satisfies the following conditions:

\begin{enumerate}
    \item (Compatibility with scalar multiplication) For every open set \( U \subseteq S \), every \( x \in E(U) \), and \( a \in C(U) \), we have
    \[
        q_U(a \cdot x) = q_U(x) \cdot n_{C}(a).
    \]
    
    \item (Quadraticity) The \emph{polar form}
    \[
        b_q(x, y) := q(x + y) - q(x) - q(y)
    \]
    defines a bilinear map
    \[
        b_q : E \times E\longrightarrow J
    \]
    of \( \mathscr{O}_S \)-modules.
\end{enumerate}

\end{definition}
\medskip

A morphism of norm forms
\[
    (E, q, J) \longrightarrow (E', q', J')
\]
is a pair of morphisms
\[
    (\alpha, \varphi), \quad \alpha \in \operatorname{Hom}_{C}(E, E'), \quad \varphi \in \operatorname{Hom}_{\mathscr{O}_S}(J, J'),
\]
such that the following diagram commutes:
\[
\begin{tikzcd}
E \arrow{r}{q} \arrow{d}[swap]{\alpha} & J \arrow{d}{\varphi} \\
E' \arrow{r}{q'} & J'
\end{tikzcd}
\]
i.e.
\[
    q' \circ \alpha = \varphi \circ q.
\]

\begin{proposition}
     Let \( S \) be a scheme, let \( C \) be a quadratic  \( \mathscr{O}_S \)-algebra equipped with its norm morphism 
$n_{C} : C \longrightarrow \mathscr{O}_S$, and let $E$ be a quasi-coherent left $C$-module. Then there exists a triple \( (E, j_{E}, J(E)) \), where:
\begin{itemize}
    \item \( J(E) \) is a quasi-coherent \( \mathscr{O}_S \)-module,
    \item \( j_{E} : E \to J(E) \) is an \( \mathscr{O}_S \)-morphism,
\end{itemize}
such that for any norm form \( (E, q, J) \), there exists a unique morphism
\[
p : J(E) \longrightarrow J
\]
with
\[
q = p \circ j_{E}.
\]

\[
\begin{tikzcd}
E \arrow[r, "j_{E}"] \arrow[dr, "q"'] & J(E) \arrow[d, dashed, "p"] \\
& J
\end{tikzcd}
\]
\end{proposition}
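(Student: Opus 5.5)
The universal norm form will be built as a quotient of a symmetric square, and its universal property will come from Lemma~\ref{equivalent def}. First observe that a norm form $q\colon E\to J$ is in particular a quadratic form in the sense of Definition~\ref{quad form}. For a local section $a$ of $\OS\subseteq C$ we have $n_C(a)=a^2$, so condition (1) gives $q(ax)=a^2q(x)$, and condition (2) says the polar form is bilinear. Lemma~\ref{equivalent def} only uses that the target is an $\OS$-module (quasi-coherence suffices), so $q$ corresponds to a unique $\OS$-linear map $\tilde q\colon \mathrm{Sym}_2E\to J$ with $\tilde q(x\otimes x)=q(x)$. This reduces the problem to describing which $\OS$-linear maps out of $\mathrm{Sym}_2E$ come from norm forms.

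Define $K\subseteq \mathrm{Sym}_2E$ to be the quasi-coherent $\OS$-submodule generated, as a sheafification, by the local sections
\[
(ax)\otimes(ax)-n_C(a)\,(x\otimes x),\qquad a\in C(U),\ x\in E(U).
\]
Set $J(E)=\mathrm{Sym}_2E/K$, and let $j_E$ be the composite $x\mapsto x\otimes x\mapsto \overline{x\otimes x}$. Then check the following points in turn.
\begin{enumerate}
\item $J(E)$ is quasi-coherent. It is a quotient of the quasi-coherent sheaf $\mathrm{Sym}_2E$ (itself a quotient of $E\otimes E$) by a quasi-coherent subsheaf. To see that $K$ is quasi-coherent, describe it as the image of the map $C\otimes E\to \mathrm{Sym}_2E$, or better, on affine opens as the submodule generated by the displayed elements; these descriptions commute with localization.
\item $j_E$ is a norm form. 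Condition (1) holds by construction, and the polar form of $j_E$ is the image of the bilinear map $(x,y)\mapsto x\otimes y+y\otimes x$.
\item Existence of $p$. Given a norm form $q$, the map $\tilde q$ kills the generators of $K$, because $\tilde q\big((ax)\otimes(ax)-n_C(a)\,x\otimes x\big)=q(ax)-n_C(a)q(x)=0$. Hence $\tilde q$ factors through $J(E)$, giving $p$ with $p\circ j_E=q$.
\item Uniqueness of $p$. The image of $j_E$ generates $J(E)$ as an $\OS$-module, since the elements $x\otimes x$ generate $\mathrm{Sym}_2E$ locally.
\end{enumerate}

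The main obstacle is sheaf-theoretic bookkeeping. One must ensure that $K$, defined via local sections, is really a quasi-coherent subsheaf. One must also ensure that the factorization through $J(E)$ glues. To handle both, I would first work on affine opens $U=\Spec R$. There I take $K(U)$ to be the $R$-submodule of $\mathrm{Sym}_2E(U)$ generated by the displayed elements, and check that forming $K(U)$ commutes with localization $R\to R_f$. This holds because elements of $C(U_f)$ and $E(U_f)$ are fractions, and $n_C$ is homogeneous of degree $2$, which clears the denominators. The affine $K(U)$ then glue to a quasi-coherent $K$. The factorization $p$ can be constructed affine-locally, and it is unique there, so the local maps agree on overlaps and glue to a global $p$.
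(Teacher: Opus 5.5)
Your argument has a genuine gap at the first step, where you replace $q$ by a linear map $\tilde q\colon \mathrm{Sym}_2E\to J$. Lemma~\ref{equivalent def} is a statement about vector bundles, and the hypothesis that matters is on the source $E$, not the target. Identifying quadratic maps out of $E$ with linear maps out of the span of the $x\otimes x$ in $E\otimes E$ works for locally free (or flat) $E$, but fails in general, and the proposition allows an arbitrary quasi-coherent $C$-module $E$.

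Here is a counterexample. Take $S=\Spec\mathbb{Z}$ and $C=\mathbb{Z}[\tau]/(\tau^2)$, so that $n_C(a+b\tau)=a^2$, and let $E=\mathbb{Z}/2$ with $\tau$ acting by $0$. The map $q\colon E\to\mathbb{Z}/4$ with $q(0)=0$, $q(1)=1$ is a norm form. Indeed, $q((a+b\tau)x)=q(ax)=a^2q(x)$, since $a^2\equiv 0$ or $1 \pmod 4$ according to the parity of $a$. Its polar form $(x,y)\mapsto 2xy$ is bilinear. But $E\otimes E=\mathbb{Z}/2$, so $\mathrm{Sym}_2E=\mathbb{Z}/2$, every generator of your $K$ is zero, and your $J(E)$ is $\mathbb{Z}/2$. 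Any linear $p\colon\mathbb{Z}/2\to\mathbb{Z}/4$ lands in $\{0,2\}$, so $p\circ j_E=q$ is impossible. The obstruction is that $2\,(1\otimes 1)=0$ in $E\otimes E$ while $2q(1)\neq 0$.

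The missing idea is that you need the universal quadratic module of $E$, for instance the divided square $\Gamma_2(E)$, not $\mathrm{Sym}_2E$. The paper's construction supplies this without any freeness assumption. It takes $\mathscr{R}_E\oplus(E\otimes E)$ modulo $(e_{ax}-n_C(a)e_x,0)$ and $(e_{x+y}-e_x-e_y,-x\otimes y)$, so quadraticity, bilinearity of the polar form and the norm condition are all imposed as relations. In the example above it yields $J(E)=\mathbb{Z}/4$ with $j_E(1)$ a generator.

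If you start from such a module and then impose your norm relations, your remaining steps go through as written. These are the norm-form check, the factorization, uniqueness because $j_E(E)$ generates, and the localization argument using that $n_C$ is homogeneous of degree $2$. Alternatively, your proof is correct as it stands under the extra hypothesis that $E$ is locally free over $\OS$. That covers the cases used later in the paper, where $E$ is an invertible $C$-module. In that setting it is a cleaner route than the paper's, since Lemma~\ref{equivalent def} already absorbs the polarization relation and quasi-coherence of $J(E)$ is transparent.

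Two small slips:
\begin{enumerate}
\item $\mathrm{Sym}_2E$ is a subsheaf of $E\otimes E$, not a quotient. The quotient $\mathrm{Sym}^2E$ would be wrong when $2$ is not invertible, since composing with $x\mapsto x^2$ only produces forms with even cross term.
\item $K$ is not the image of a linear map $C\otimes E\to\mathrm{Sym}_2E$, since $(a,x)\mapsto(ax)\otimes(ax)-n_C(a)\,x\otimes x$ is quadratic in each variable. Your affine description of $K$ is the right one.
\end{enumerate}
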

\begin{proof}
A proof over an arbitrary commutative ring $R$ is given in \cite[Proposition 2.3]{bichselknus}, and further elaborated in \cite[Proposition 7.1.1, Chapter 3]{knus}. We now provide a sketch of the corresponding construction in the geometric setting, valid over a general base scheme $S$.

 Let $E$ be a quasi-coherent $\OS$-module. We begin by defining a presheaf \( \mathscr{R}_{E}^{\mathrm{pre}} \) on $S$ by setting, for each open subset $U\subseteq S$,
\[
    \mathscr{R}_{E}^{\mathrm{pre}}(U) := \bigoplus_{x \in E(U)} \mathscr{O}_S(U) \cdot e_x,
\]
the free \( \mathscr{O}_S(U) \)-module generated by the set \( E(U) \). Sheafifying this presheaf yields the free $\OS$-module sheaf
\[
    \mathscr{R}_{E} := \widetilde{\mathscr{R}_{E}^{\mathrm{pre}}}.
\]
Next, consider the direct sum sheaf
\[
    \mathscr{F} := \mathscr{R}_{E} \oplus (E \otimes_{\mathscr{O}_S} E),
\]
which is again quasi-coherent. 

We now define a presheaf \( \mathscr{Q}^{\mathrm{pre}} \) of \( \mathscr{O}_S \)-submodules of \( \mathscr{F} \) as follows: for each open set \( U \subseteq S \), let \( \mathscr{Q}^{\mathrm{pre}}(U) \subseteq \mathscr{F}(U) \) be the \( \mathscr{O}_S(U) \)-submodule generated by:
\begin{align*}
    & (e_{a x} - n_{C}(a) e_x,\; 0), \\
    & (e_{x+y} - e_x - e_y,\; -x \otimes y),
\end{align*}
for all \( x, y \in E(U) \), \( a \in C(U) \).

These relations ensure compatibility with the norm map and enforce bilinearity of the polar form.

Let $\mathscr{Q}\subseteq \mathscr{F}$ be the subsheaf of $\OS$-modules obtained by sheafifying $\mathscr{Q}^{\mathrm{pre}}$. This defines a well-defined \( \mathscr{O}_S \)-submodule sheaf of \( \mathscr{F} \). The expression \( \mathscr{F}(U)/\mathscr{Q}(U) \) defines a presheaf of \( \mathscr{O}_S \)-modules. However, this quotient may not be a sheaf. We define the universal norm sheaf to be the sheafification of this presheaf:
\[
    J(E) := \left( U \mapsto \mathscr{F}(U) / \mathscr{Q}(U) \right)^\sim.
\]

This ensures \( J(E) \) is a quasi-coherent \( \mathscr{O}_S \)-module.

Define the morphism
\[
    j_E : E \longrightarrow J(E)
\]
by sending a local section \( x \in E(U) \) to the class \( [e_x, 0] \in J(E)(U) \). It follows from a direct computation that $j_E$ is a norm form taking values in $J(E)$ and that it satisfies the requisite universal property. A complete account of this can be found in \cite[Proposition 3.6]{sohamthesis}.
    
\end{proof}

\begin{proposition}\label{propuninormform}
Let $(E, j_E, J(E))$ be the universal norm form corresponding to the left $C$-module $E$, then
\begin{itemize}
    \item[(a)] The image of the universal norm form $ j_E \colon E \to J(E) $ generates $ J(E) $ as an $ \mathscr{O}_S $-module; that is,
$$
J(E) = \langle j_E(E) \rangle_{\mathscr{O}_S}.
$$

\item[(b)] Let $ f: T \to S $ be a morphism of schemes. Then there exists a canonical isomorphism of $ \mathscr{O}_T $-modules:
$$
f^* J(E) \xrightarrow{\cong} J({f^* E})
$$
which is compatible with the universal norm maps in the sense that the following diagram commutes:
$$
\begin{tikzcd}
f^* E \arrow[d, "\mathrm{id}"'] \arrow[r, "j_E \otimes 1"] & f^* J(E) \arrow[d, "\sim"] \\
f^* E \arrow[r, "j_{f^* E}"'] & J({f^* E})
\end{tikzcd}
$$
or equivalently:
$$
j_{f^* E}(x \otimes 1) = j_E(x) \otimes 1.
$$
\end{itemize}
\end{proposition}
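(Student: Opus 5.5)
For part (a), I will argue directly from the construction of $J(E)$ as the sheafification of $U\mapsto \mathscr{F}(U)/\mathscr{Q}(U)$. The aim is to show that for every open $U$ and every $x,y\in E(U)$, the class of $(0, x\otimes y)$ equals the class of $(e_{x+y}-e_x-e_y,0)$. This is immediate from the second family of generators of $\mathscr{Q}^{\mathrm{pre}}(U)$. It follows that every section of the presheaf quotient, which is a finite $\mathscr{O}_S(U)$-combination of elements $(e_x,0)$ and $(0,\sum x_i\otimes y_i)$, is an $\mathscr{O}_S(U)$-combination of the classes $[e_x,0]=j_E(x)$. Since sheafification is locally the same as the presheaf, the $\mathscr{O}_S$-submodule generated by $j_E(E)$ has stalks equal to the stalks of $J(E)$, hence equals $J(E)$. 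One can also phrase this via the universal property: let $J'\subseteq J(E)$ be the subsheaf generated by $j_E(E)$. Then $j_E$ factors through $J'$ as a norm form, because the norm-form conditions are inherited. Uniqueness in the universal property then forces the composite $J(E)\to J'\hookrightarrow J(E)$ to be the identity. I will present this second argument, since it avoids stalk bookkeeping.

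For part (b), I will build the two maps from universal properties and check that they are mutually inverse.

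First, $f^*E$ is a left $f^*C$-module, and $f^*C$ is a quadratic $\mathscr{O}_T$-algebra whose norm is $f^*n_C$, because the norm is compatible with base change. Next, I need to check that $j_E\otimes 1\colon f^*E\to f^*J(E)$, defined on sections by $x\otimes t\mapsto t^2\,(j_E(x)\otimes 1)$, is a norm form. This is the main obstacle, because $j_E$ is not linear, so ``$j_E\otimes 1$'' must be interpreted as a quadratic map. I will handle it through the polar form. The polar bilinear map $b_{j_E}$ is $\mathscr{O}_S$-bilinear, so it extends to $f^*E\times f^*E\to f^*J(E)$. A quadratic map on $f^*E$ is then determined locally on generators $x\otimes t$ by the rule $Q(\sum x_i\otimes t_i)=\sum t_i^2 j_E(x_i)+\sum_{i<j}t_it_j b_{j_E}(x_i,x_j)$. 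Well-definedness of this formula follows because it coincides with the quadratic form attached, via Lemma~\ref{equivalent def}, to the pulled-back morphism $f^*(\mathrm{Sym}_2 E)\to f^*J(E)$; I would use the fact that the formation of the relevant Sym construction commutes with pullback. The norm condition $Q(ax)=f^*n_C(a)Q(x)$ then only needs to be checked on generators of $f^*C$ and $f^*E$, where it reduces to the identity for $j_E$ together with the polar identity $b(ax,ay)=n_C(a)b(x,y)$, which is obtained by polarizing.

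With this in place, the universal property of $J(f^*E)$ yields $p\colon J(f^*E)\to f^*J(E)$. Conversely, the composite $E\to f_*f^*E\xrightarrow{f_* j_{f^*E}} f_*J(f^*E)$ is a norm form on $E$. The universal property of $J(E)$ gives $J(E)\to f_*J(f^*E)$, and by adjunction this gives $p'\colon f^*J(E)\to J(f^*E)$, satisfying $p'(j_E(x)\otimes1)=j_{f^*E}(x\otimes1)$; this is exactly the commutativity of the required diagram. Finally, by part (a) the sheaves $J(f^*E)$ and $f^*J(E)$ are generated by the images of their respective norm forms, on which $p\circ p'$ and $p'\circ p$ act as the identity. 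Combined with the uniqueness clauses, this shows the two maps are mutually inverse isomorphisms.
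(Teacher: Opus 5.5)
The overall plan works. In (a), factoring $j_E$ through the subsheaf $J'$ generated by $j_E(E)$ and invoking uniqueness is correct. In (b), the map $p'\colon f^*J(E)\to J(f^*E)$, obtained by adjunction from the norm form $E\to f_*f^*E\to f_*J(f^*E)$, is fine.

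\textbf{The gap: constructing $Q$ for general $E$.} The gap sits in the step you flagged: constructing the quadratic map $Q\colon f^*E\to f^*J(E)$ extending $x\otimes t\mapsto t^2\,(j_E(x)\otimes 1)$. You justify well-definedness via Lemma~\ref{equivalent def} and pullback of $\mathrm{Sym}_2E$. That lemma is stated only for vector bundles, and it is false for general quasi-coherent modules. The proposition, however, concerns an arbitrary quasi-coherent left $C$-module $E$; the restriction to invertible $C$-modules only comes after Proposition~\ref{uniline}.

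For a counterexample, take $S=\operatorname{Spec}\mathbb{Z}$ and $C=\OS[\tau]/(\tau^2)$, so $n_C(a+b\tau)=a^2$. Let $E=\mathbb{Z}/2$ with $\tau$ acting by $0$. Then $x\mapsto x^2$, $\mathbb{Z}/2\to\mathbb{Z}/4$, is a norm form. It cannot come from a linear map on $\mathrm{Sym}_2E=\mathbb{Z}/2\otimes\mathbb{Z}/2\cong\mathbb{Z}/2$, since any such map has image in $\{0,2\}$. Hence $j_E$ does not factor through $\mathrm{Sym}_2E$ either, and there is no morphism $\mathrm{Sym}_2E\to J(E)$ whose pullback could define $Q$.

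There are two ways to repair this:
\begin{enumerate}
\item Replace $\mathrm{Sym}_2E$ by the degree-two divided powers $\Gamma_2(E)$. These represent quadratic maps out of an arbitrary module, and their formation commutes with arbitrary base change, $f^*\Gamma_2(E)\cong\Gamma_2(f^*E)$. This produces $Q$ in general.
\item Restrict (b) to $E$ locally free of finite rank over $\OS$, which is the case used in Theorem~\ref{simprimi}. There $\Gamma_2(E)\cong\mathrm{Sym}_2E$ and your argument works as written.
\end{enumerate}

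\textbf{Two smaller points.}
\begin{enumerate}
\item Checking $Q(a'y)=(f^*n_C)(a')\,Q(y)$ only on generators $a\otimes 1$ of $f^*C$ is not enough, because both sides are quadratic in $a'$. You also need the identities obtained by polarizing $j_E(ax)=n_C(a)\,j_E(x)$ in $a$:
\begin{itemize}
\item $b_{j_E}(ax,a'x)=b_{n_C}(a,a')\,j_E(x)$;
\item $b_{j_E}(ax,a'y)+b_{j_E}(ay,a'x)=b_{n_C}(a,a')\,b_{j_E}(x,y)$.
\end{itemize}
These are routine but should be stated.
\item The paper states the universal property of $J(E)$ for quasi-coherent targets. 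But $f_*J(f^*E)$ need not be quasi-coherent unless $f$ is quasi-compact and quasi-separated. Either note that the construction of $J(E)$ gives the universal property for arbitrary $\OS$-module targets (this also covers $J'$ in (a)), or work over affine opens of $S$ and $T$ and glue using canonicity.
\end{enumerate}
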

\begin{proof}
 Verifications are straightforward from the definition of the universal norm form; see \cite[Lemma 3.7 and Lemma 3.8]{sohamthesis} for the details.
\end{proof}

For a universal norm form \( j_E: E \to J(E) \), the target \(J(E)\) is not necessarily a line bundle in general. However, our interest lies in those cases where \(J(E)\) \emph{is} a line bundle. In particular, we shall demonstrate that if \(E\) is a locally free \(C\)-module of rank one, then \(J(E)\) is indeed a line bundle.

\begin{proposition}\label{uniline}
   
Let \( C \) be a quadratic  \( \mathscr{O}_S \)-algebra with norm morphism
\[
n_{C} : C \to \mathscr{O}_S,
\]
and let \( j_{C} : C \to J(C) \) be the universal norm form associated to the left \( C \)-module \( E = C \). Then:
\[
(C, j_{C}, J(C)) \cong (C, n_{C}, \mathscr{O}_S)
\]
as norm form triples.
 \end{proposition}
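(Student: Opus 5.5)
The plan is to show that $(C, n_C, \mathscr{O}_S)$ satisfies the universal property that characterizes $(C, j_C, J(C))$. Uniqueness up to unique isomorphism then gives the claimed isomorphism of norm form triples, compatible with the maps from $C$.

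\textbf{Step 1: $n_C$ is a norm form.} Multiplicativity of the norm gives
\[
n_C(a\cdot x)=n_C(a)\,n_C(x)
\]
for local sections $a,x$ of $C$. The polar form $b_{n_C}(x,y)=n_C(x+y)-n_C(x)-n_C(y)$ is $\mathscr{O}_S$-bilinear. Both facts are local. On an open $U$ where $C|_U=\mathscr{O}_U\oplus\tau\mathscr{O}_U$, take $n_C$ to be the determinant of left multiplication, equivalently $x\sigma(x)$ for the standard involution. Then
\[
n_C(u+v\tau)=u^2+muv-rv^2 \quad\text{when } \tau^2=m\tau+r,
\]
which is visibly a quadratic form with bilinear polar form.

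\textbf{Step 2: factorization.} Let $(C,q,J)$ be any norm form. For a local section $x$ of $C$, the norm form axiom with $a=x$ and the element $1$ gives
\[
q(x)=q(x\cdot 1)=n_C(x)\,q(1).
\]
The global section $q(1)\in\Gamma(S,J)$ therefore determines an $\mathscr{O}_S$-linear map
\[
p:\mathscr{O}_S\to J,\qquad f\mapsto f\,q(1),
\]
with $q=p\circ n_C$. Uniqueness holds because any $p'$ with $q=p'\circ n_C$ must satisfy $p'(1)=p'(n_C(1))=q(1)$, and an $\mathscr{O}_S$-linear map out of $\mathscr{O}_S$ is determined by the image of $1$.

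\textbf{Step 3: comparison.} Applying the universal property of $J(C)$ to the norm form $n_C$ gives a map
\[
p_1:J(C)\to\mathscr{O}_S,\qquad n_C=p_1\circ j_C.
\]
Step 2 applied to $j_C$ gives a map
\[
p_2:\mathscr{O}_S\to J(C),\qquad p_2(1)=j_C(1),\qquad j_C=p_2\circ n_C.
\]
The composite $p_1p_2$ sends $1$ to $n_C(1)=1$, so $p_1p_2=\mathrm{id}$. For the other composite, $p_2p_1\circ j_C=j_C$. By the uniqueness clause of the universal property of $J(C)$, or alternatively because $j_C(C)$ generates $J(C)$ by Proposition~\ref{propuninormform}(a), this forces $p_2p_1=\mathrm{id}$. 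Hence $(\mathrm{id}_C,p_1)$ is the desired isomorphism of triples.

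\textbf{Main obstacle.} The delicate point is Step 1 in the non-affine setting. One must ensure $n_C$ is a well-defined global morphism. This follows because the determinant of left multiplication on a locally free rank-2 module is intrinsic, so the local formulas glue. One must also ensure the axioms hold on arbitrary opens $U$ rather than only on trivializing ones. Since both identities are equalities of sections of sheaves, checking them on a trivializing cover suffices.
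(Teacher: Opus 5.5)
Your proposal is correct and follows essentially the same route as the paper. The paper only says the claim is a straightforward consequence of the universal property and defers the details to the thesis. Your argument is that verification written out: you check that $n_C$ is a norm form, factor an arbitrary norm form as $q(x)=n_C(x)\,q(1)$ through $\mathscr{O}_S$, and compare the two universal objects via $p_1$ and $p_2$.
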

\begin{proof}
This verification is a straightforward consequence of the universal property; the details are given in \cite[Proposition 3.11]{sohamthesis}.
\end{proof}
\vspace{.05in}
\begin{rmk}
    The preceding Proposition~\ref{uniline} shows that if $E$ is a locally free  $C$-module of rank 1, then $J(E)$ is a line bundle. From now on, unless otherwise specified, $ E $ will always denote a locally free $ C $-module of rank 1.
\end{rmk}

Given that $ E $ is a locally free $C$-module of rank 1, the corresponding universal norm form takes values in a line bundle. In fact, since $ C $ is a quadratic $ \mathscr{O}_S $-algebra, this form is explicitly a line bundle-valued binary quadratic form. Consequently, the even Clifford algebra is well-defined, and our aim is to explicitly determine this even Clifford algebra associated to the universal norm form.

\vspace{.05in}
 \begin{lemma}\label{normCliffordalgebra}
Let $ C $ be a quadratic $ \OS $-algebra over a scheme $ S $, equipped with a norm map $ n_C: C \to \OS $. Then the even Clifford algebra associated to the norm form $ n $ is canonically isomorphic to $ C $ as $ \OS $-algebras:
$$
C_0(C, n_C, \OS) \xrightarrow{\cong} C.
$$
\end{lemma}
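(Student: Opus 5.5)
The plan is to construct the map by the universal property of the even Clifford algebra (Proposition~\ref{universalC_0}) and then check that it is an isomorphism locally. Here the form is $(E,q,L)=(C,n_C,\OS)$, so $L^\vee=\OS$ and $E\otimes E\otimes L^\vee\cong C\otimes C$. Let $\sigma$ be the standard involution of $C$; it exists and glues globally by uniqueness (Proposition 1.3.4 of Knus and the subsequent remark). Then $n_C(x)=\sigma(x)x$, and the polar form is $b(x,y)=\sigma(x)y+\sigma(y)x=\operatorname{tr}(\sigma(x)y)$. Define
\[
j\colon C\otimes C\to C,\qquad j(x\otimes y)=\sigma(x)\,y .
\]
This is $\OS$-linear because $\sigma$ is $\OS$-linear and multiplication is bilinear. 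It is also canonical, since no choices of bases are involved, so it is a global morphism.

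Next I verify the two relations in Proposition~\ref{universalC_0}. First, $j(x\otimes x)=\sigma(x)x=n_C(x)\cdot 1$. Second, using commutativity of $C$,
\[
j(u\otimes v)\,j(v\otimes w)=\sigma(u)v\sigma(v)w=n_C(v)\,\sigma(u)w=n_C(v)\,j(u\otimes w).
\]
Commutativity holds because a rank $2$ algebra is locally $\OS[\tau]/(\tau^2+r\tau+s)$. Proposition~\ref{universalC_0} then gives a unique $\OS$-algebra morphism $\psi\colon C_0(C,n_C,\OS)\to C$ with $\psi\circ i_{C_0}=j$.

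To see that $\psi$ is an isomorphism, I work locally on an open set where $C=\OS\oplus\tau\OS$ with $\tau^2=b\tau-ac$ in the normalization of the paper. Concretely, take $\tau^2=m\tau+r$, so $\sigma(\tau)=m-\tau$. Proposition~\ref{induceevenclifford}(a) shows $C_0$ is locally free of rank $2$, with basis $1$ and $i_{C_0}(1\otimes\tau)$ by Proposition~\ref{quotientclifford}. The image $\psi(i_{C_0}(1\otimes\tau))=\sigma(1)\tau=\tau$, so $\psi$ hits the basis $\{1,\tau\}$ and is surjective. A surjection between locally free modules of the same finite rank is an isomorphism, so $\psi$ is an isomorphism locally and therefore globally. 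Alternatively, one can match this with the computation in Lemma~\ref{traceablebimodule}: with basis $e_1=1$, $e_2=\tau$ the norm is $x^2+mxy-ry^2$, and its $C_0$ has $\tau'^2=m\tau'+r$, which is the relation of $C$.

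The main obstacle is the global existence and linearity of $\sigma$ without assuming $\tfrac12\in\Gamma(S,\OS)$, together with canonicity. I would handle existence by the local formula $\sigma(x)=\operatorname{tr}(x)-x$, using the trace of left multiplication, which is visibly global and independent of choices. Canonicity follows from the uniqueness clause of the universal property.
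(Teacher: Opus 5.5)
Your proposal is correct and takes essentially the paper's route: the paper also gets the map $C_0(C,n_C,\OS)\to C$ from the universal property (Proposition~\ref{universalC_0}) and defers the details to the thesis. Your explicit choice $j(x\otimes y)=\sigma(x)\,y$, with $\sigma(x)=\operatorname{tr}(x)-x$, checks both relations for all local sections, and the local check that $\psi$ hits $1$ and $\tau$ supplies exactly those omitted details.
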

\begin{proof}
The verification is a straightforward consequence of the universal property of the even Clifford algebra (Proposition~\ref{universalC_0}); see \cite[Lemma 3.10]{sohamthesis} for the complete argument.
\end{proof}

\vspace{.05in}
\begin{proposition}
    Let \( C \) be a quadratic \( \mathscr{O}_S \)-algebra over a scheme \( S \), and let \( E \) be a locally free left \( C \)-module of rank 1. Let \( j_E: E \to J(E) \) be the universal norm form. Then:
\[
C_0(E, j_E, J(E) \cong C
\]
as \( \mathscr{O}_S \)-algebras.

\end{proposition}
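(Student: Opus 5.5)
The strategy is to produce a global algebra map \(\psi\colon C_0(E,j_E,J(E))\to C\) from the universal property of the even Clifford algebra (Proposition~\ref{universalC_0}), and then to check locally that it is an isomorphism. Locally we reduce to Lemma~\ref{normCliffordalgebra} and Proposition~\ref{uniline}. The key point is to write the relevant map intrinsically in terms of the \(C\)-action, so that no gluing of local choices is needed.

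\textbf{Step 1: build the map.} Note first that \(J(E)\) is a line bundle, by Proposition~\ref{uniline} together with Proposition~\ref{propuninormform}(b) applied to a trivializing cover on which \(E\cong C\). Define
\[
j\colon E\otimes E\otimes J(E)^\vee\to C
\]
intrinsically as follows. Use the canonical \(C\)-semilinear pairing coming from the standard involution \(\sigma\) of \(C\). Locally, choose an identification \(E|_U\cong C|_U\) sending \(u\mapsto \gamma_u\). Then \(J(E)|_U\cong\mathscr O_U\) via \(n_C\), and we set
\[
j(u\otimes v\otimes f)=f(1)\,\sigma(\gamma_u)\gamma_v
\]
or the conjugate ordering, whichever matches the convention in Lemma~\ref{normCliffordalgebra}. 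We must check independence of the choice. Changing the identification multiplies every \(\gamma\) by a unit \(c\in C(U)^\times\). The product \(\sigma(\gamma_u)\gamma_v\) then picks up the factor \(\sigma(c)c=n_C(c)\). This factor is exactly absorbed by the induced change of trivialization of \(J(E)\), since the universal norm form transforms by \(n_C(c)\). So the local maps glue to a global \(\OS\)-linear map \(j\).

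\textbf{Step 2: verify the Clifford relations.} The two relations of Proposition~\ref{universalC_0} are local. So we may assume \(E=C\), \(J(E)=\OS\) and \(j_E=n_C\). There they read:
\begin{itemize}
\item \(\sigma(x)x=n_C(x)\);
\item \(\sigma(x)y\,\sigma(y)z=n_C(y)\,\sigma(x)z\).
\end{itemize}
Both follow from commutativity of \(C\) and \(n_C(y)=y\sigma(y)\). This is exactly the computation underlying Lemma~\ref{normCliffordalgebra}. We therefore obtain a unique algebra homomorphism \(\psi\colon C_0(E,j_E,J(E))\to C\).

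\textbf{Step 3: check that \(\psi\) is an isomorphism.} Being an isomorphism is local. By Proposition~\ref{induceevenclifford}(b), the local similarity \((E,j_E,J(E))|_U\cong(C,n_C,\OS)|_U\) identifies \(\psi|_U\) with the canonical isomorphism of Lemma~\ref{normCliffordalgebra}. Alternatively, \(\psi|_U\) is a surjection between locally free modules of rank \(2\), and hence an isomorphism.

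The main obstacle is Step 1, namely making the local definition canonical so that it glues. The delicate points are the transformation rule of the local trivializations of \(J(E)\) under automorphisms \(c\) of \(E|_U\cong C|_U\), and the ordering and conjugation convention. If this becomes messy, the fallback is to bypass gluing. For this we define \(\psi\) via uniqueness, using Proposition~\ref{induceevenclifford}(b) and Proposition~\ref{propuninormform}(b):
\begin{itemize}
\item on each open set \(U\) of the cover, take the isomorphism \(C_0|_U\cong C|_U\);
\item show that on overlaps the two choices differ by an automorphism of \(C\) induced by a similarity of the form \(x\mapsto cx\);
\item since \(C\) is commutative, such an automorphism is the identity, so the local isomorphisms agree and glue.
\end{itemize}
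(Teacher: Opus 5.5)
Your proposal is correct and follows essentially the same route as the paper. The paper obtains the algebra map $C_0(E,j_E,J(E))\to C$ from the universal property of the even Clifford algebra (Proposition~\ref{universalC_0}), checks the relations and bijectivity locally where $E\cong C$, and defers the remaining details to the thesis. Your gluing check, where the factor $\sigma(c)c=n_C(c)$ cancels against the induced change of trivialization of $J(E)$, and your surjectivity-plus-rank argument in Step~3 supply exactly those deferred details.
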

\begin{proof}
The result follows from the universal property of the even Clifford algebra, formulated in Proposition~\ref{universalC_0}. A complete and detailed argument is provided in \cite[Proposition 3.14]{sohamthesis}.

\end{proof}

Our primary objective in this section is to classify \emph{primitive binary quadratic forms} over an arbitrary base scheme $ S $, taking values in \emph{line bundles}, up to \emph{similarity}, by means of \emph{Clifford invariants}.

\begin{lemma}\label{formsimuninorm}
    Let \( (E, q, L) \) be a primitive binary quadratic form over a scheme \( S \). Then \( (E, q, L) \) is similar to a universal norm form \( (E, j, J(E)) \), where \( E \) is a locally free rank 1 left module over a quadratic \( \mathscr{O}_S \)-algebra \( C \), \( j: E \to J(E) \) is the universal norm form, and \( J(E) \) is the universal norm module.
\end{lemma}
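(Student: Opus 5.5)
The plan is to take $C := C_0(E,q,L)$. By Proposition~\ref{primitive}, primitivity of $q$ means $E = C_1(E,q,L)$ is a locally free left $C$-module of rank $1$. With this $C$-module structure on $E$ we form the universal norm form $(E, j_E, J(E))$. By Proposition~\ref{uniline}, together with the base change property of Proposition~\ref{propuninormform}(b) and locality, $J(E)$ is a line bundle. The similarity to be produced is $(\mathrm{id}_E, p)$, where $p\colon J(E)\to L$ is the morphism furnished by the universal property, once we know that $q$ is itself a norm form for this $C$-action.

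The first step is to show that $(E,q,L)$ is a norm form, namely that $q(a\cdot x) = n_C(a)\,q(x)$ for local sections $a$ of $C$ and $x$ of $E$. Quadraticity is automatic. Since this identity is local, we may assume $E = Re_1\oplus Re_2$, $L=R$ and $q = ax^2+bxy+cy^2$. Then $C = R\oplus R\tau$ with $\tau^2 = b\tau - ac$, $\tau e_1 = be_1 - ae_2$ and $\tau e_2 = ce_1$, as in the proof of Lemma~\ref{traceablebimodule}. For $\alpha = u+v\tau$ we have $n_C(\alpha) = u^2 + buv + acv^2$. One checks directly that $q(\alpha\cdot(xe_1+ye_2)) = n_C(\alpha)\,q(xe_1+ye_2)$. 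The cleanest route is the Clifford identity $q(\alpha x)=\alpha\bar\alpha\, q(x)$ in $C_1\otimes_{C_0}C_1\to C_0\otimes L$, where $\bar\alpha$ is the standard involution, so a short polynomial computation suffices. The universal property then gives a unique $p\colon J(E)\to L$ with $q = p\circ j_E$.

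The second step, which I expect to be the main obstacle, is to show that $p$ is an isomorphism. Both $J(E)$ and $L$ are line bundles, so it suffices to check surjectivity locally, since a surjection between invertible sheaves is an isomorphism. Locally, $E\cong C$ as a $C$-module, say $E = C\cdot e$ with $e$ a local generator. By Proposition~\ref{uniline}, $J(E)\cong R$ with $j_E(\alpha e) = n_C(\alpha)$. The map $p$ is then multiplication by $q(e)$, because $q(\alpha e) = n_C(\alpha)q(e)$. So $p$ is an isomorphism exactly when $q(e)$ is a unit. Primitivity says the values of $q$ generate the unit ideal, and every value $q(\alpha e) = n_C(\alpha)q(e)$ lies in the ideal $(q(e))$. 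Hence $(q(e)) = R$ and $q(e)\in R^\times$.

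Combining the two steps, $(\mathrm{id}_E, p^{-1})$ is a similarity from $(E,q,L)$ to $(E, j_E, J(E))$, which proves the lemma. The care needed in the second step is in the local identification of $J(E)$, which must be compatible with restriction. Propositions~\ref{propuninormform}(b) and \ref{uniline} provide this compatibility, and they ensure that checking surjectivity of $p$ on an affine cover is legitimate.
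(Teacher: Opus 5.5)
Your proposal is correct and follows the paper's proof: take $C=C_0(E,q,L)$, factor $q$ through $j_E$ by the universal property, and use primitivity to see that the induced map $p\colon J(E)\to L$ between line bundles is surjective, hence an isomorphism. Your version is more explicit in two places. You actually verify the norm-form identity $q(\alpha x)=n_C(\alpha)\,q(x)$, which the paper takes for granted here. You also check surjectivity through the local description of $p$ as multiplication by $q(e)$, whereas the paper argues globally that the values of $q=p\circ j_E$, which generate $L$, lie in the image of $p$.
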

\begin{proof}
    Given the primitive binary quadratic form \( q: E \to L \), we have its generalized Clifford algebra:
\[
C(E, q, L) = C_0(E, q, L) \oplus C_1(E, q, L),
\]
where:
\begin{itemize}
  \item \( C_0(E, q, L) =: C \) is a quadratic \( \mathscr{O}_S \)-algebra,
  \item \( C_1(E, q, L)  \) is a locally free rank 1 right \( C \)-module.
\end{itemize}

We can identify \( C_1(E, q, L)\) and \(E \) by canonical \( \mathscr{O}_S \)-modules isomorphism; see Theorem~\ref{cliffordbimoduleunderlyingmodule} for details. 
Since \( q: E \to L  \) is a norm form, by the universal property of \( j \), there exists a unique morphism:
\[
\phi: J(E) \to L
\]
such that:
\[
q = \phi \circ j.
\]

\noindent
That is, the following diagram commutes:
\[
\begin{tikzcd}
E \arrow[r, "j"] \arrow[dr, "q"'] & J(E) \arrow[d, "\phi"] \\
& L
\end{tikzcd}
\]
Now \( L \) and \( J(E) \) are invertible \( \mathscr{O}_S \)-modules, and \( q \) is primitive, the image of \( j \) under \( \phi \) generates \( L \). Hence, \( \phi \) is a surjective morphism of line bundles, and therefore an isomorphism:
\[
\phi : J(E) \xrightarrow{\cong} L.
\]
As \( q = \phi \circ j \) and \( \phi \) is an isomorphism, we conclude that \( (E, q, L) \) is similar to \( (E, j, J(E)) \), i.e., they differ only by a change of target line bundle via an isomorphism. Thus,
\[
(E, q, L) \sim (E, j, J(E)).
\]
\end{proof}

\begin{lemma}\label{compareuninorm}
    Let \( S \) be a scheme. Suppose:
\begin{itemize}
    \item \( C, C' \) are  quadratic \( \mathscr{O}_S \)-algebras ,
    \item \( E \) is a quasi-coherent left \( C \)-module,
    \item \( E' \) is a quasi-coherent left \( C' \)-module.
\end{itemize}

Let:
\begin{itemize}
\item \( j : E \to J(E) \) be the universal norm form associated to \( E \) with respect to a fixed morphism \( n_C : C \to \mathscr{O}_S \),
    \item \( j' : E' \to J'(E') \) be the universal norm form for \( E' \) with respect to \( n'_{C'} : C' \to \mathscr{O}_S \).

\end{itemize}

Assume we are given:
\begin{enumerate}

\item A morphism of quasi-coherent \( \mathscr{O}_S \)-modules \( f : E \to E' \) which is an isomorphism.
    \item A morphism of quasi-coherent \( \mathscr{O}_S \)-algebras \( g : C \to C' \), also an isomorphism.
    \item Compatibility: For every open \( U \subseteq S \), and local sections \( x \in E(U), a \in C(U) \), we have:
    \[
    f(a \cdot x) = g(a) \cdot f(x).
    \]
   
\end{enumerate}

\medskip

 Under these assumptions, there exists a unique isomorphism of quasi-coherent \( \mathscr{O}_S \)-modules:
\[
F : J(E) \xrightarrow{\cong} J'(E')
\]
such that the following diagram commutes:
\[
\begin{tikzcd}
E \arrow{r}{f} \arrow{d}{j} & E' \arrow{d}{j'} \\
J(E) \arrow{r}{F} & J'(E')
\end{tikzcd}
\]
\end{lemma}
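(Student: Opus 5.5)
We use the universal property of the universal norm form twice, once in each direction, and then the uniqueness clause to show the two induced maps are mutually inverse. The point is that $j'\circ f : E \to J'(E')$ is a norm form on the $C$-module $E$ with respect to $n_C$, and $j\circ f^{-1}: E'\to J(E)$ is a norm form on the $C'$-module $E'$ with respect to $n'_{C'}$.

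\textbf{Step 1 (key compatibility).} The main obstacle is to check that $n'_{C'}\circ g = n_C$. This identity is not among the hypotheses as stated, but it is implicit once $n_C$ and $n'_{C'}$ are the canonical norms of the quadratic algebras, which is the setting of this section. Since $g$ is an $\OS$-algebra isomorphism, it commutes with the unique standard involutions, because uniqueness is local and the involutions glue as in the Remark on standard involutions. Hence $n'_{C'}(g(a)) = g(a)\overline{g(a)} = g(a\bar a) = a\bar a = n_C(a)$, the last equality holding because $g$ is $\OS$-linear and fixes $\OS\cdot 1$. Equivalently, one can check locally on a basis $\{1,\tau\}$: $g(\tau)$ satisfies the same characteristic polynomial as $\tau$, so the norms agree.

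\textbf{Step 2 (norm form conditions).} With Step 1 in hand, for local sections $a\in C(U)$ and $x\in E(U)$ we compute
\[
(j'\circ f)(a x) = j'(g(a) f(x)) = n'_{C'}(g(a))\, j'(f(x)) = n_C(a)\,(j'\circ f)(x).
\]
The polar form of $j'\circ f$ is $b_{j'}(f(-),f(-))$. This is $\OS$-bilinear because $f$ is $\OS$-linear. Therefore $j'\circ f$ is a norm form with values in $J'(E')$. The symmetric argument, using $g^{-1}$ and $n_C\circ g^{-1}=n'_{C'}$, shows that $j\circ f^{-1}$ is a norm form on $E'$ with values in $J(E)$.

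\textbf{Step 3 (induced maps).} The universal property gives unique $\OS$-linear maps $F: J(E)\to J'(E')$ with $F\circ j = j'\circ f$, and $G: J'(E')\to J(E)$ with $G\circ j' = j\circ f^{-1}$. The first identity is exactly the required commuting square.

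\textbf{Step 4 (inverse maps).} We compute $(G\circ F)\circ j = G\circ j'\circ f = j$. Since $j$ is itself a norm form and $\mathrm{id}_{J(E)}$ also factors it, the uniqueness clause of the universal property gives $G\circ F=\mathrm{id}$. Alternatively, this follows from Proposition~\ref{propuninormform}(a), since $j(E)$ generates $J(E)$ and two $\OS$-linear maps agreeing on generators coincide. Similarly $F\circ G=\mathrm{id}$. Thus $F$ is an isomorphism.

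\textbf{Step 5 (uniqueness of $F$).} Any $F'$ making the square commute satisfies $F'\circ j = j'\circ f = F\circ j$. By uniqueness in the universal property, or again by Proposition~\ref{propuninormform}(a), $F'=F$.
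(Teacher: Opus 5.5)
Your proof is correct and follows the paper's approach: the paper cites the universal property of the universal norm form (details deferred to the thesis), and your Steps 2--5 carry this out by building $F$ and an inverse $G$ from the universal property and then invoking uniqueness. Your Step 1 usefully makes explicit the compatibility $n'_{C'}\circ g = n_C$, which the statement leaves implicit; it holds because both maps are the canonical norms, and an isomorphism of quadratic algebras preserves the standard involution (equivalently, the determinant of left multiplication).
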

\begin{proof}
It is a direct consequence of the universal property of the universal norm form. A detailed and rigorous verification can be found in \cite[Lemma 3.16]{sohamthesis}. 
\end{proof}
\vspace{.05in}
\begin{proposition}\label{simuninorm}
Let $(E, q, L)$ and $(E', q', L')$ be two similar primitive binary quadratic forms valued in line bundles over an arbitrary base scheme $S$. Then their corresponding universal norm forms are also similar.
    \end{proposition}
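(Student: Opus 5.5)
The plan is to transport the similarity through the Clifford invariants and then apply Lemma~\ref{compareuninorm}. Let $(\varphi,\mu_\varphi)\colon (E,q,L)\to(E',q',L')$ be a similarity, and set $C=C_0(E,q,L)$ and $C'=C_0(E',q',L')$. By Proposition~\ref{primitive}, $E$ is a locally free $C$-module of rank $1$ and $E'$ is a locally free $C'$-module of rank $1$. Here $E$ is identified with $C_1(E,q,L)$ via Theorem~\ref{cliffordbimoduleunderlyingmodule}, and similarly for $E'$. The universal norm forms in question are therefore $(E,j,J(E))$, taken with respect to $n_C$, and $(E',j',J'(E'))$, taken with respect to $n_{C'}$.

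The first step is to produce the data required by Lemma~\ref{compareuninorm}. By Proposition~\ref{induceevenclifford}(b), the similarity induces an $\OS$-algebra isomorphism $g:=C_0(\varphi,\mu_\varphi)\colon C\to C'$. By Proposition~\ref{semilinear}(b), it also induces an $\OS$-module isomorphism $f:=C_1(\varphi,\mu_\varphi)\colon E\to E'$, which is $g$-semilinear, i.e.\ $f(a\cdot x)=g(a)\cdot f(x)$ on local sections. Equivalently, this is the isomorphism of pairs supplied by the well-definedness part of the proof of Theorem~\ref{MainTheorem}.

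One point needs checking before the lemma applies: $g$ must intertwine the norms, $n_{C'}\circ g=n_C$. The norm of a quadratic algebra is determined intrinsically by the algebra structure. Locally it is the constant term of the characteristic polynomial $a^2-\operatorname{tr}(a)a+n(a)=0$, or equivalently $a\sigma(a)$ for the unique standard involution. Any $\OS$-algebra isomorphism preserves this, so the equality can be checked locally and then glued.

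With $f$, $g$, compatibility and norm compatibility in hand, Lemma~\ref{compareuninorm} gives a unique $\OS$-isomorphism $F\colon J(E)\to J'(E')$ with $F\circ j=j'\circ f$. The pair $(f,F)$ is then a similarity $(E,j,J(E))\to(E',j',J'(E'))$. The target modules are line bundles by Proposition~\ref{uniline} and the subsequent remark, so this is a genuine similarity of line-bundle-valued binary forms.

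As a consistency check, Lemma~\ref{formsimuninorm} already gives similarities $(E,q,L)\sim(E,j,J(E))$ and $(E',q',L')\sim(E',j',J'(E'))$. Composing these with the given similarity yields an alternative proof, since similarity is an equivalence relation: it is closed under composition and inversion.

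I expect the main obstacle to be justifying $f(a\cdot x)=g(a)\cdot f(x)$ for the \emph{left} $C$-action used in defining norm forms. Proposition~\ref{semilinear} is phrased via the bimodule multiplication, and the module structure in Lemma~\ref{formsimuninorm} is described as a right module. I would first check semilinearity locally, using the explicit relations $\tau e_1=be_1-ae_2$ and $\tau e_2=ce_1$ with $\tau\mapsto d\tau'$. If that route gets messy, I would fall back on the composition argument via Lemma~\ref{formsimuninorm}, which sidesteps the issue.
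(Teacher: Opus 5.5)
Your proposal is correct and is essentially the paper's argument, which just asserts that the similarity yields data satisfying the hypotheses of Lemma~\ref{compareuninorm} and applies it. Your explicit choice $g=C_0(\varphi,\mu_\varphi)$, $f=C_1(\varphi,\mu_\varphi)$ and your check $n_{C'}\circ g=n_C$ (which the lemma tacitly needs) fill in what the paper leaves implicit; the fallback through Lemma~\ref{formsimuninorm} and transitivity of similarity is also valid, and the left/right worry is harmless since $C_1(\varphi,\mu_\varphi)$ is a bimodule map over $g$, hence semilinear for either action.
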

    
\begin{proof}

Given that the forms $(E, q, L)$ and $(E', q', L')$ are similar, it follows that their associated universal norm forms meet all the hypotheses of Lemma~\ref{compareuninorm}. Consequently, by Lemma~\ref{compareuninorm}, these universal norm forms are similar as well.

\end{proof}

\begin{theorem}\label{simprimi}
    For any scheme $S$, the natural map
    \[(E, q, L)\mapsto (C_0(E, q, L), C_1(E, q, L))\]
    induces a discriminant-preserving (up to sign) \textemdash specifically, satisfying
$$
\Delta(q) = -\Delta(C_0(E, q, L)),
$$ 
bijective correspondence
\[
\left\{
\parbox{2.5in}{\centering Similarity classes of primitive binary quadratic forms (E, q, L) over \( S \)}
\right\}
\longleftrightarrow
\left\{
\parbox{2.5in}{\centering Isomorphism classes of pairs \( (C, E) \), \\ with \( C \) a quadratic algebra over \( S \), \\ and \( E \) a locally free rank 1 \( C \)-module}
\right\},
\]
which is functorial in $S$.

An isomorphism of pairs $(C , E)$ and $(C', E')$ is given by an isomorphism $C \cong C'$ of $\mathscr{O}_S$-algebras, and an isomorphism $E \cong E'$ of $\mathscr{O}_S$-modules that respects the $C$ and $C'$ module structures. Here for any binary quadratic form $ q:E \rightarrow L $, the objects $ C_0(E, q, L) $ and $ C_1(E, q, L) $ denote, respectively, the degree zero and degree one components of the generalized Clifford algebra associated to the binary quadratic form $ q $.

\end{theorem}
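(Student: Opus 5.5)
The plan is to prove Theorem~\ref{simprimi} by the universal-norm-form route of this section, rather than by simply restricting Theorem~\ref{MainTheorem}. That alternative remains available as a consistency check via Theorem~\ref{classifi1}.

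\emph{Well-definedness.} Let $(E,q,L)$ be primitive. Proposition~\ref{induceevenclifford} shows that $C=C_0(E,q,L)$ is a quadratic algebra. Proposition~\ref{primitive}, together with the identification $C_1(E,q,L)\cong E$ of Theorem~\ref{cliffordbimoduleunderlyingmodule}, shows that $E$ is a locally free rank one $C$-module. Propositions~\ref{induceevenclifford} and~\ref{semilinear} turn a similarity into an isomorphism of pairs. Functoriality in $S$ then follows from the base change isomorphisms in these same propositions.

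\emph{Surjectivity.} Given a pair $(C,E)$ with $E$ locally free of rank one over $C$, I would take the universal norm form $j_E\colon E\to J(E)$. Remark after Proposition~\ref{uniline} says that $J(E)$ is a line bundle; this is checked locally, where $E\cong C$ and $J(C)\cong\OS$, using Proposition~\ref{propuninormform}(b) to compare with restrictions. Since $E$ has rank $2$ over $\OS$, $j_E$ is a binary quadratic form. It is primitive because locally it is $n_C$ and $n_C(1)=1$. The proposition computing $C_0(E,j_E,J(E))\cong C$ then supplies the algebra isomorphism.

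I still need that this isomorphism is compatible with the module structures, i.e.\ that the $C_0$-action on $C_1\cong E$ corresponds to the given $C$-action. I would check this locally with $E=C$ using Lemma~\ref{normCliffordalgebra}, where the isomorphism comes from the universal property and can be traced on generators.

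\emph{Injectivity.} Suppose two primitive forms have isomorphic pairs $(g,f)\colon (C,E)\to(C',E')$. Lemma~\ref{formsimuninorm} shows each form is similar to the universal norm form of its pair. Lemma~\ref{compareuninorm} gives an isomorphism $F\colon J(E)\to J'(E')$ with $j'\circ f=F\circ j$, so the universal norm forms are similar. Composing these similarities shows the two original forms are similar.

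One subtlety needs care here. Lemma~\ref{formsimuninorm} applies $j$ with respect to the norm of $C_0(E,q,L)$. I must verify that $q(ax)=n_C(a)q(x)$ for the Clifford action, i.e.\ that $q$ is a norm form. This is a local computation with the relations $\tau e_1=be_1-ae_2$, $\tau e_2=ce_1$ and $n(\tau)=ac$, using $n(x+y\tau)=x^2+bxy+acy^2$.

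\emph{Discriminant.} The discriminant statement is Lemma~\ref{discriminant-comparison}.

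\emph{Main obstacle.} I expect the hardest step to be the compatibility of $C_0(E,j_E,J(E))\cong C$ with the module structures on $C_1$. The plan is to reduce it to the free case $E=C$ and compute with the basis $\{1,\tau\}$. There the identification $C_1\cong E$ sends $e_1\cdot e_2\mapsto\tau$, and the two actions must agree on $\tau$. The local isomorphisms then glue because they are characterized by the universal properties in Propositions~\ref{universalC_0} and~\ref{universalbimodule}.
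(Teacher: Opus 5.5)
Your proposal is correct and follows essentially the paper's route. It uses the universal norm form $j_E\colon E\to J(E)$ for surjectivity, Lemmas~\ref{formsimuninorm} and~\ref{compareuninorm} with transitivity of similarity for injectivity, Propositions~\ref{induceevenclifford} and~\ref{semilinear} for well-definedness, and Lemma~\ref{discriminant-comparison} for the discriminant. You are more careful than the paper's proof, which leaves implicit the primitivity of $j_E$, the norm-form property of $q$ with respect to $C_0(E,q,L)$, and the compatibility of $C_0(E,j_E,J(E))\cong C$ with the module structures. In that last check, note that for $E=C$, $e_1=1$, $e_2=\tau$, the element $e_1e_2$ acts on the left as $\bar\tau$ rather than $\tau$. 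This is harmless, since an isomorphism of pairs may be composed with the canonical involution of $C$.
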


\begin{proof}
    We will proceed step by step.
    \subsection*{\textnormal{\textit{Construction and well-definedness of the Reverse Map}}}

    For a pair $(C, E)$, we consider the universal norm form associated to $(C, E)$, which is a binary quadratic form over an arbitrary base scheme $S$. This assignment is well-defined by Proposition~\ref{simuninorm}.
    \subsection*{\textnormal{\textit{Well-definedness and Injectivity of the map in Theorem~\ref{simprimi}}}}
    
% \noindent\textbf{Well-definedness and Injectivity of the map in Theorem~\ref{simprimi}.}\\

% \subsection*{Well-defindness and Injectivity of the map in Theorem~\ref{simprimi}}

Suppose that $(E, q, L)$ and $(E', q', L')$ are similar binary quadratic forms. Then, according to Proposition~\ref{semilinear}, the induced pairs $(C = C_0(E, q, L),\ C_1(E, q, L) = E)$ and $(C' = C_0(E', q', L'),\ C_1(E', q', L') = E')$ are isomorphic. This establishes the well-definedness of the map.

We now establish injectivity. Suppose that $(E, q, L)$ and $(E', q', L')$ are two primitive binary quadratic forms such that the induced pairs
$$
(C = C_0(E, q, L),\ C_1(E, q, L) = E) \quad \text{and} \quad (C' = C_0(E', q', L'),\ C_1(E', q', L') = E')
$$
are isomorphic. Then, by Lemmas~\ref{formsimuninorm} and~\ref{compareuninorm}, and using the transitivity of similarity, it follows that $(E, q, L)$ and $(E', q', L')$ are similar.
\subsection*{\textnormal{\textit{Surjectivity of the map in Theorem \ref{simprimi}}}}

% \noindent\textbf{Surjectivity of the map in Theorem \ref{simprimi}.}\\
% \subsection*{Surjectivity of the map in Theorem \ref{simprimi}}

Given a pair $(C, E)$, where $C$ is a quadratic algebra and $E$ is a locally free $C$-module of rank 1, the associated universal norm form $j: E \to J(E)$ defines a \emph{line bundle-valued primitive binary quadratic form}.
\subsection*{\textnormal{\textit{Functoriality and discriminant--preserving property (up to sign) of the map in Theorem \ref{simprimi}}}}

% \noindent\textbf{Functoriality and discriminant-preserving (up to sign) of the map in Theorem \ref{simprimi}.}\\

This follows from Proposition~\ref{propuninormform} and Lemma~\ref{discriminant-comparison}.

\end{proof}

One of the objectives of the present work is to extend the classical theory of Gauss composition to the setting of an arbitrary base scheme $S$. This generalization, which will be developed in later sections, relies on a careful analysis of the algebraic and geometric structures underlying binary quadratic forms. The proposition that follows provides an essential step in understanding the composition law in a more conceptual and geometrically meaningful way. 
 \vspace{.05in}
\begin{proposition}\label{compgauss}
    Let $ S $ be a scheme, let $ C $ be a quadratic $ \mathscr{O}_S $-algebra with norm map $ n_C \colon C \to \mathscr{O}_S $, and let $ E_1 $, $ E_2 $ be locally free $ C $-modules of rank 1. Denote by $ J(E_1) $ and $ J(E_2) $ the universal norm modules of $ E_1 $ and $ E_2 $, respectively, with associated universal norm forms  
$$
j_{E_1} \colon E_1 \to J(E_1), \quad j_{E_2} \colon E_2 \to J(E_2).
$$

Let $ E := E_1 \otimes_{C} E_2 $. Then the tensor product $ E $ is a locally free $ C $-module of rank 1, and there exists a canonical universal norm form  
$$
j_{E} \colon E \to J(E_1) \otimes_{\mathscr{O}_S} J(E_2),
$$  
defined by the composition  
$$
j_E(x \otimes y) = j_{E_1}(x) \otimes j_{E_2}(y),
$$  
for local sections $ x \in E_1 $, $ y \in E_2 $.
\end{proposition}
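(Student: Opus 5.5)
We need to prove three things: that $E = E_1\otimes_C E_2$ is locally free of rank 1 over $C$; that the formula $x\otimes y\mapsto j_{E_1}(x)\otimes j_{E_2}(y)$ defines an $\mathscr{O}_S$-morphism (in fact a norm form); and that it is universal, i.e. $J(E)\cong J(E_1)\otimes_{\mathscr{O}_S} J(E_2)$ compatibly with the maps.

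The first assertion is local on $S$. Choose an open cover on which $E_1|_U\cong C|_U$ and $E_2|_U\cong C|_U$ as $C$-modules. Then $E|_U\cong C|_U\otimes_{C|_U}C|_U\cong C|_U$. For the second assertion, the main difficulty is that $j_{E_1}\otimes j_{E_2}$ is only quadratic in each variable, so it does not obviously descend along $E_1\times E_2\to E_1\otimes_C E_2$. We therefore work locally. By Proposition~\ref{uniline} and Lemma~\ref{compareuninorm}, a trivialization $E_i|_U\cong C|_U$ identifies $(E_i, j_{E_i}, J(E_i))|_U$ with $(C, n_C,\mathscr{O}_S)|_U$; base change compatibility comes from Proposition~\ref{propuninormform}(b). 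Under these identifications the proposed map becomes $C\otimes_C C\to \mathscr{O}_U$, $a\otimes b\mapsto n_C(a)n_C(b)=n_C(ab)$. This is exactly $n_C$ composed with the multiplication isomorphism $C\otimes_C C\cong C$. It is therefore a well-defined norm form, and it is universal by Proposition~\ref{uniline}.

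The main obstacle is gluing. We must check that the local maps do not depend on the chosen trivializations, so that they patch to a global $j_E$ given by the stated formula on pure tensors. Changing the trivialization of $E_i$ multiplies by a unit $u_i\in C(U)^\times$. This alters the identification $J(E_i)|_U\cong\mathscr{O}_U$ by $n_C(u_i)$, and alters the identification $E|_U\cong C|_U$ by $u_1u_2$. Multiplicativity of the norm, $n_C(u_1u_2)=n_C(u_1)n_C(u_2)$, shows that these changes cancel. So the local maps agree with the intrinsic formula $j_{E_1}(x)\otimes j_{E_2}(y)$ on overlaps and glue.

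Universality is then a local statement, because the universal norm module is compatible with restriction to opens by Proposition~\ref{propuninormform}(b). Hence the canonical map $J(E)\to J(E_1)\otimes J(E_2)$ induced by the universal property is an isomorphism, since it is one locally. This identifies $j_E$ with the universal norm form of $E$, taking values in $J(E_1)\otimes_{\mathscr{O}_S}J(E_2)$.
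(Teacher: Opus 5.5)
Your argument is correct, but it takes a different route from the paper's.

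\textbf{The paper's route.} The paper never trivializes. It writes down $j_E$ on pure tensors, says it is ``easy to see'' that this is a norm form, and then proves universality globally by a currying argument. Given any norm form $q\colon E\to J$ and a fixed local section $m_2$ of $E_2$, the map $m_1\mapsto q(m_1\otimes m_2)$ is a norm form on $E_1$, so it factors through some $p_{m_2}\colon J(E_1)\to J$. The assignment $m_2\mapsto p_{m_2}$ is then a norm form $E_2\to\mathscr{H}om_{\OS}(J(E_1),J)$, so it factors through some $\phi\colon J(E_2)\to\mathscr{H}om_{\OS}(J(E_1),J)$. Setting $p(t_1\otimes t_2)=\phi(t_2)(t_1)$ gives the required map, and it is unique because $j_E(E)$ generates.

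\textbf{Your route.} You reduce to the model case $(C,n_C,\OS)$ using Proposition~\ref{uniline}, Lemma~\ref{compareuninorm} and the base-change part of Proposition~\ref{propuninormform}. You use multiplicativity of $n_C$ twice: to identify the local map with $n_C\circ\mu$, and to check that the local maps glue. Universality then follows from the local statement.

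\textbf{What your route buys.} It handles honestly the point the paper passes over. A formula on pure tensors does not by itself define a quadratic map on $E_1\otimes_C E_2$. What makes $j_E$ well defined is the rank-one hypothesis together with $n_C(ab)=n_C(a)n_C(b)$. You should state one observation explicitly, since both your local identification and your gluing step rely on it without saying so: on a trivializing open, every section $c\,(e_1\otimes e_2)=(ce_1)\otimes e_2$ is a pure tensor. That is why the local maps are pinned down by the intrinsic formula. You also get for free that $J(E_1)\otimes_{\OS}J(E_2)$ is locally $\OS$, with $j_E$ locally equal to $n_C$.

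\textbf{What the paper's route buys.} Its universality half is formal. It uses only the universal properties of $J(E_1)$ and $J(E_2)$ and Hom--tensor adjunction, with no trivializations, no identification $J(C)\cong\OS$ and no cocycle check. In your version, by contrast, the rank-one hypothesis is used at every step.
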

\begin{proof}
    We provide here a sketch of the proof, focusing on the key constructions and logical flow. Full details may be found in \cite[Proposition 3.19]{sohamthesis}.
    
Define the map
\[
j_E : E_1 \otimes_C E_2 \to J(E_1) \otimes_{\mathscr{O}_S} J(E_2), \quad m_1 \otimes m_2 \mapsto j_{E_1}(m_1) \otimes j_{E_2}(m_2).
\]
It is easy to see that it is a norm form. We now show that $j_E$ satisfies the universal property. Let $q : E \to J$ be an arbitrary norm form. We will construct a unique $\mathscr{O}_S$-linear map 
$$
p: J(E_1) \otimes_{\mathscr{O}_S} J(E_2) \to J
$$
such that $q = p \circ j_E$. Now
\begin{enumerate}
    \item For fixed $m_2 \in E_2(U)$, define $q_{m_2}: E_1 \to J$ by $q_{m_2}(m_1) = q(m_1 \otimes m_2)$. This is a norm form.
    \item By the universal property of $j_{E_1}$, there is a unique $p_{m_2}: J(E_1) \to J$ such that $q_{m_2} = p_{m_2} \circ j_{E_1}$.
    \item The assignment $m_2 \mapsto p_{m_2}$ defines a norm form $E_2 \to \mathscr{H}om_{\mathscr{O}_S}(J(E_1), J)$.
    \item By the universal property of $j_{E_2}$, there is a unique $\phi: J(E_2) \to \mathscr{H}om_{\mathscr{O}_S}(J(E_1), J)$ such that $\phi(j_{E_2}(m_2)) = p_{m_2}$.
    \item Define $p: J(E_1) \otimes_{\mathscr{O}_S} J(E_2) \to J$ by $p(t_1 \otimes t_2) = \phi(t_2)(t_1)$. Then
    \[
    p(j_E(m_1 \otimes m_2)) = \phi(j_{E_2}(m_2))(j_{E_1}(m_1)) = p_{m_2}(j_{E_1}(m_1)) = q(m_1 \otimes m_2).
    \]
\end{enumerate}
Uniqueness follows because $j_E(E)$ generates $J(E_1) \otimes_{\mathscr{O}_S} J(E_2)$. Thus, the universal norm module for $E$ is
\[
J(E) = J(E_1) \otimes_{\mathscr{O}_S} J(E_2).
\]
This follows since $j_E$ is a norm form with values in this sheaf, and the universal property ensures it is initial among all norm forms for $E$. The universal norm form for $E_1 \otimes_C E_2$ is
\[
j_E: E_1 \otimes_C E_2 \to J(E_1) \otimes_{\mathscr
{O}_S} J(E_2), \quad j_E(m_1 \otimes m_2) = j_{E_1}(m_1) \otimes j_{E_2}(m_2),
\]
with universal norm module $J(E_1) \otimes_{\mathscr{O}_S} J(E_2)$. 
\end{proof}

\section{Applications}
\subsection{Relationship to the Work of Wood}
In this section, we explain how our current work compares to  the important work of Wood \cite{Wood}. We start by reviewing the main ideas from \cite{Wood}. As we will see, the two main differences between the approach of Wood and our approach are that Wood uses \textit{linear} binary quadratic forms, whereas we use classical binary forms; furthermore, a conceptual distinction lies in our explicit engagement with the Clifford point of view, a perspective absent in Wood’s treatment.
\vspace{.05in}
\begin{definition}
    A \textit{linear} binary quadratic form over a scheme \( S \) is a triple \( (V, L, f) \), where \( V \) is a locally free \( \OS\)-module of rank 2, \( L \) is a locally free \( \OS\)-module of rank 1 (that is, a line bundle), and \( f \) is a global section of the \( \OS\)-module \( \operatorname{Sym}^2 V \otimes L \). 
    \end{definition}

Let \( U \subseteq S \) be an open subset over which the restrictions \( V|_U \) and \( L|_U \) are free.  
Let $\{x, y\}$ be a basis of \( V|_U \) over \( \mathscr{O}_U \), and let $\{z\}$ be a basis of \( L|_U \) over \( \mathscr{O}_U \).  
By a slight abuse of notation, omitting tensor products, the quadratic form \( q \) can be locally expressed on \( U \) as  

\[
q = a x^2 z + b x y z + c y^2 z,
\]

for some \( a, b, c \in \Gamma(U, \OS) \). This expression is sometimes denoted succinctly as \( [a, b, c] \), omitting explicit reference to the chosen bases.  
\vspace{.05in}
\begin{definition}[Isomorphism of \textit{linear} binary Quadratic Forms]  
  
Two \textit{linear} binary quadratic forms \( (V, L, q) \) and \( (V', L', q') \) are said to be \emph{isomorphic}  
if there exist isomorphisms of \( \OS \)-modules  
\[
f: V \xrightarrow{\simeq} V', \quad g: L \xrightarrow{\simeq} L'
\]
such that the induced map  
\[
\operatorname{Sym}^2 f \otimes g
\]
sends \( q \) to \( q' \). This equivalence can be understood as a \emph{right action} of the group  
\( \operatorname{GL}_2(V) \times \operatorname{GL}_1(L) \), which we refer to as the  
\emph{\( \operatorname{GL}_2 \times \operatorname{GL}_1 \)-action}.  
\end{definition}

Locally, this action can be described explicitly. Let \( U \subseteq S \) be an open subset  
over which both \( V|_U \) and \( L|_U \) are free. Since every section  
\( q \in \Gamma(U, \operatorname{Sym}^2 V \otimes_{\OS} L) \)  
can be expressed as a linear combination of simple tensors, it suffices to compute the action on elements of the form  
\( x_1 \otimes x_2 \otimes z \), where  
\[
x_1, x_2 \in \Gamma(U, V), \quad z \in \Gamma(U, L).
\]
Given a pair \( (\mu, \varepsilon) \in \Gamma(U, \operatorname{GL}_2(\OS) \times \operatorname{GL}_1(\OS)) \),  
the action is given by  
\[
(\mu, \varepsilon) \cdot (x_1 \otimes x_2 \otimes z) := (\mu x_1) \otimes (\mu x_2) \otimes (\varepsilon z).
\]
This describes how the \emph{\( \operatorname{GL}_2 \times \operatorname{GL}_1 \)-action} transforms \textit{linear} binary quadratic forms  
under changes of basis in \( V \) and \( L \).

Following Theorem 1.4 from Wood's work \cite{Wood}, there is a functorial discriminant preserving bijection:
\[
\left\{
\parbox{2.5in}{\centering Isomorphism classes of \textit{linear} binary quadratic forms over \( S \)}
\right\}
\longleftrightarrow
\left\{
\parbox{2.5in}{\centering Isomorphism classes of \( (C, E) \), \\ with \( C \) a quadratic algebra over \( S \), \\ and \( E \) a traceable \( C \)-module}
\right\}.
\]

\textit{Given } $(C, M)$ \textit{ and the corresponding } $f \in \mathrm{Sym}^2 V \otimes L$, \textit{we have } 
$M \cong V$ \textit{ as } $\OS$-\text{modules and}
\[
C / \OS \cong \Lambda^2 V^* \otimes L^*
\]
\textit{as } $\OS$-\text{modules. An isomorphism of pairs } $(C, M)$ \textit{ and } $(C', M')$ \textit{ is given by an isomorphism } $C \cong C'$ \textit{ of } $\OS$\text{-algebras, and an isomorphism } $M \cong M'$ \textit{ as } $\OS$-\text{modules that respects the } $C$ \textit{ (or } $C'$\text{) module structure.}

Here, we briefly recall the local version of the bijection described above; for a global formulation, we refer the reader to \cite{Wood}.

Given a \textit{linear} binary quadratic form \( f \in \mathrm{Sym}^2 V \otimes L \), we construct the modules \( C \) and \( M \) over \( \OS \) as follows:  
\[
C = \OS \oplus \wedge^2 V^\vee \otimes L^\vee, \quad M = V. 
\]  

To fully define the algebraic structure of \( C \) and the module structure of \( M \), we must now specify their respective operations. Initially, we consider the case where both \( V \) and \( L \) are free, such that \( V = \OS x \oplus \OS y \) and \( L = \OS z \). Redefining the basis, we let \( (1,0) \) and \( (0, (x^* \wedge y^*) \otimes z^*) \) in \( C \) correspond to \( 1 \) and \( \tau \), respectively. Given a quadratic form \( f = a x^2 z + b x y z + c y^2 z \), we designate \( 1 \) as the multiplicative identity of \( C \) and define the remaining algebra and module structures by the following relations:  
\[
\tau^2 = -b\tau - ac, ~~~~ \tau x = -cy - bx,~~~~~  \tau y = ax.
\]  

These structural relations endow \( M \) with the properties of a \textit{traceable} \( C \)-module.

Given a quadratic \(\OS\)-algebra \( C \) and a \textit{traceable} \( C \)-module \( M \), we can construct the \(\OS\)-modules \( V = M \) and  
\[
L = \Lambda^2 V^* \otimes (C / \OS)^*.
\]  
In the case where both \( C \) and \( M \) are free \(\OS\)-modules, we may select bases \( \{1, \tau\} \) for \( C \) and \( \{x, y\} \) for \( M \), satisfying the structural relations:  
\[
\tau x = -cy - bx, ~~~~ \tau y = ax,
\]
for some coefficients \( a, b, c \in \OS \). By adjusting \( \tau \) via an element of \(\OS\) if necessary, we can ensure that \( \tau y \) is a scalar multiple of \( a \), and we call such a basis \( \{1, \tau\} \) as normalized.  

If the quadratic relation  
\[
\tau^2 = -q\tau - r
\]  
holds for some \( q, r \in \OS \), then the \textit{traceability} condition imposes the constraint \( q = b \), while the requirement  
\[
\tau^2 = -q\tau - r
\]  
yields the additional condition \( r = ac \). Consequently, from the data of \( (C, M) \), we can construct the quadratic form  
\[
ax^2 z + bxyz + cy^2 z,
\]  
where \( z = x^* \wedge y^* \otimes \bar{\tau}^* \), and \( \bar{\tau} \) represents the image of \( \tau \) in \( C / \OS \).  

For an arbitrary pair \( (C, M) \), this construction determines an element \( f \in \mathrm{Sym}^2 V \otimes L \) locally on \( S \) in situations where \( C \) and \( M \) are free \(\OS\)-modules. To confirm that the local definitions of \( f \) are compatible across overlaps, it suffices to verify that any alternate choice of a basis for \( M \) and a corresponding normalized basis for \( C \) yields the same quadratic form.  

The constructions outlined above are mutually inverse, as their local definitions are explicitly constructed to be inverses. This establishes the bijection in Theorem 1.4 in \cite{Wood}.

There is a distinction between \textit{linear} binary quadratic forms and binary quadratic forms in the classical sense, such as those considered in the work of Kneser. Kneser considers binary quadratic forms \( q: M \to N \), where \( M \) is a locally free \( R \)-module of rank 2 and \( N \) is a locally free \( R \)-module of rank 1. These maps satisfy the properties that for all \( r \in R \) and \( m \in M \),  

\[
q(rm) = r^2 q(m),
\]  

and that the expression  

\[
q(x + y) - q(x) - q(y)
\]  

defines a bilinear form on \( M \times M \).

\begin{proposition}\label{comparison}
    Quadratic maps \( q: M \to N \) in the sense of Kneser, as described earlier, are in bijective correspondence with \textit{linear} binary quadratic forms \( f \in \mathrm{Sym}^2 M^* \otimes N \), where \( M \) and \( N \) are \( R \)-modules that are locally free of ranks \( 2 \) and \( 1 \) respectively. 
\end{proposition}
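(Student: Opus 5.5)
The plan is to pass through Lemma~\ref{equivalent def}, which identifies Kneser quadratic maps $q\colon M\to N$ with $R$-linear maps $\mathrm{Sym}_2 M\to N$. The statement then reduces to a natural isomorphism
\[
\mathscr{H}om(\mathrm{Sym}_2 M, N)\cong \mathscr{H}om(\mathrm{Sym}_2M,R)\otimes N\cong \mathrm{Sym}^2(M^\vee)\otimes N ,
\]
where the first isomorphism holds because $N$ is invertible. So the real content is a canonical identification $(\mathrm{Sym}_2 M)^\vee\cong \mathrm{Sym}^2(M^\vee)$ for $M$ locally free of rank $2$. Here $\mathrm{Sym}_2M\subseteq M\otimes M$ is the submodule of symmetric tensors and $\mathrm{Sym}^2$ is the symmetric power quotient.

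To construct the map, start from the perfect pairing $M^\vee\otimes M^\vee\times M\otimes M\to R$. It restricts to a pairing $\mathrm{Sym}^2(M^\vee)\times \mathrm{Sym}_2M\to R$. This needs a check that it is well defined on the quotient: for a symmetric tensor $t$, the element $\phi\otimes\psi-\psi\otimes\phi$ pairs with $t$ to zero, which holds because $\langle \phi\otimes\psi,t\rangle$ is invariant under the swap. Under this pairing a monomial $\phi\psi$ sends $v\otimes v$ to $\phi(v)\psi(v)$, so the induced quadratic map is $v\mapsto \sum\phi_i(v)\psi_i(v)$. This is the expected "evaluation of a polynomial" description. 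Since every construction is canonical, it glues, and it is compatible with localization. It therefore suffices to check bijectivity on an affine open where $M=Rx\oplus Ry$ and $N=Rz$.

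In the local case, $\mathrm{Sym}_2 M$ is free with basis $x\otimes x$, $y\otimes y$, $x\otimes y+y\otimes x$. This is the step requiring care in characteristic $2$: one must verify that $\mathrm{Sym}_2M$, as used in Lemma~\ref{equivalent def} (generated by the squares $v\otimes v$), really has this basis. It does, because $(x+y)^{\otimes 2}-x^{\otimes2}-y^{\otimes2}=x\otimes y+y\otimes x$ and these three elements span the symmetric tensors freely. The space $\mathrm{Sym}^2(M^\vee)$ has basis $x^{*2}$, $x^*y^*$, $y^{*2}$. The pairing matrix against the basis above is the identity: $x^*y^*$ pairs with $x\otimes y+y\otimes x$ to $1$ and with the squares to $0$. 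It is therefore perfect, which shows $\mathrm{Sym}^2(M^\vee)\cong(\mathrm{Sym}_2M)^\vee$ with no invertibility of $2$ needed. Concretely, $q(ux+vy)=(au^2+buv+cv^2)z$ corresponds to $f=(a x^{*2}+b x^*y^*+c y^{*2})\otimes z$.

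The main obstacle is this characteristic-free perfectness. One must pair with the submodule of symmetric tensors, not with the quotient $\mathrm{Sym}^2M$; otherwise the middle coefficient would pick up a factor of $2$ and the correspondence would fail over rings where $2$ is not invertible. Once that is settled, the same computation shows naturality in $(M,N)$ under isomorphisms, so the correspondence also respects the $\mathrm{GL}_2\times\mathrm{GL}_1$-actions.
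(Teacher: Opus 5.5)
Your proof is correct. The paper gives no argument of its own here; it only cites Wood's Proposition~6.1. Your proposal is therefore a different, self-contained route, and it stays inside the paper's own framework.

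\begin{itemize}
\item Lemma~\ref{equivalent def} turns quadratic maps into $\mathscr{H}om(\mathrm{Sym}_2M,N)\cong(\mathrm{Sym}_2M)^\vee\otimes N$.
\item The canonical pairing $\langle\varphi\psi,t\rangle$ identifies $(\mathrm{Sym}_2M)^\vee$ with $\mathrm{Sym}^2(M^\vee)$.
\item Perfectness follows from the identity pairing matrix against $x\otimes x$, $y\otimes y$, $x\otimes y+y\otimes x$.
\end{itemize}

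This is the divided-power duality behind the description of quadratic forms as global sections of $\mathrm{Sym}^2(E^\vee)\otimes L$. That description appears as a further equivalent item in the Auel--Bernardara--Bolognesi lemma from which Lemma~\ref{equivalent def} is quoted (the paper reproduces only two items). So the proposition could even be read off from that source.

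A bare citation, or a hands-on matching of coefficients $[a,b,c]$ in chosen bases, would still need a separate basis-change or gluing check. Your version avoids this and gains the following:
\begin{itemize}
\item The map is defined globally, with no choice of bases.
\item It is visibly characteristic-free, and it pinpoints why $\mathrm{Sym}^2(M^\vee)$, not $(\mathrm{Sym}^2M)^\vee$, is the right object when $2$ is not invertible.
\item The same argument works in any finite rank and over any scheme.
\item It yields an $R$-linear bijection natural in $(M,N)$. The paper tacitly relies on this naturality in Theorem~\ref{duality}, when it passes between classical forms on $E^\vee$ and Wood's forms on $E$.
\end{itemize}

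Your check that the submodule generated by the squares $v\otimes v$ coincides with the symmetric tensors is also worth keeping. The paper uses both descriptions of $\mathrm{Sym}_2E$ interchangeably without comment.
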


\begin{proof}
Refer to \cite[Proposition 6.1]{Wood} for the proof.
\end{proof}

\subsection*{Involutions and duality on binary forms}
\begin{theorem}\label{duality}
    $E$ be locally free rank 2, $L$ be locally free rank 1 over a general scheme $S$. Then we have
    \begin{itemize}
        \item  A duality between classical $L$-valued binary quadratic forms on $E$ and $E^\vee$;
        \item A duality between Wood's \textit{linear} binary $L$-valued quadratic forms on $E$ and $E^\vee$;
        \item Involutions on the space of Clifford pairs and on the space of Wood's pairs.
    \end{itemize}
    
\end{theorem}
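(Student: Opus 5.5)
The plan is to obtain every item of Theorem~\ref{duality} by composing three bijections that are already available:
\begin{itemize}
\item[(i)] Theorem~\ref{MainTheorem}, which identifies classical forms $(E,q,L)$ with Clifford pairs $(C,E)$;
\item[(ii)] Wood's Theorem~1.4, which identifies linear forms $f\in\Gamma(S,\Sym^2 E\otimes L_W)$ with the same pairs $(C,E)$ of a quadratic algebra and a traceable module;
\item[(iii)] Proposition~\ref{comparison}, which identifies linear forms in $\Sym^2 M^\vee\otimes N$ with classical quadratic maps $M\to N$.
\end{itemize}
Throughout I work with similarity (respectively isomorphism) classes, since all three bijections are stated at that level.

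\emph{Step 1: the duality on classical forms.} Starting from a classical form $q\colon E\to L$, take its Clifford pair $(C_0(E,q,L),E)$ by Theorem~\ref{MainTheorem}. Lemma~\ref{traceablebimodule} shows this is a Wood pair, so Wood's bijection gives a linear form $f\in\Sym^2 E\otimes L_W$. Here
\[
L_W=\wedge^2E^\vee\otimes (C/\OS)^\vee .
\]
By Proposition~\ref{quotientclifford} we have $C/\OS\cong\wedge^2E\otimes L^\vee$, hence $L_W\cong L\otimes(\det E)^{\otimes -2}$. Writing $\Sym^2E=\Sym^2(E^\vee)^\vee$ and applying Proposition~\ref{comparison} with $M=E^\vee$, the form $f$ becomes a classical quadratic map $q^\ast\colon E^\vee\to L_W$. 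Since every step is a bijection on classes, $q\mapsto q^\ast$ is a bijection from classical forms on $E$ to classical forms on $E^\vee$.

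I will state explicitly that the value bundle is $L$ only after the canonical twist by $(\det E)^{-2}$. The claim ``with values in $L$'' is then read up to this twist, or holds literally when $(\det E)^{\otimes 2}$ is trivial, which is the case locally.

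\emph{Step 2: the duality on Wood's forms.} The same chain read in the other order gives this item. A linear form on $E$ is, by Proposition~\ref{comparison}, a classical form on $E^\vee$. Applying Theorem~\ref{MainTheorem} to that classical form gives a pair $(C',E^\vee)$, and Wood's bijection turns $(C',E^\vee)$ into a linear form on $E^\vee$.

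\emph{Step 3: the involutions on pairs.} I define the map on Clifford pairs by transport: send $(C,E)$ to the Clifford pair of $q^\ast$, namely $(C_0(E^\vee,q^\ast,L_W),E^\vee)$. Wood's pairs are handled in the same way, and by the final corollary of Section~3 the two kinds of pairs coincide.

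The real content is showing that these maps are \emph{involutions}, i.e.\ $(q^\ast)^\ast\sim q$. Applying the construction twice, the value bundle becomes
\[
L\otimes(\det E)^{-2}\otimes(\det E^\vee)^{-2}\cong L,
\]
and $E^{\vee\vee}=E$, so the line bundles and modules return to where they started. For the forms themselves I would reduce to the free case and compare structure constants. Using the Clifford relations \eqref{mnp}, namely $\tau e_1=be_1-ae_2$ and $\tau e_2=ce_1$, and Wood's normalized relations $\tau x=-cy-bx$ and $\tau y=ax$, I would compute that $[a,b,c]$ on $E$ is sent to an explicit triple on $E^\vee$, of the form $[c,-b,a]$ up to a sign or unit, with the sign fixed by the discriminant relation $\Delta(q)=-\Delta(C_0)$. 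Applying the map twice should then return $[a,b,c]$ up to a unit, which is a similarity.

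I expect the main obstacle to be this local sign and normalization bookkeeping: the basis change $\tau\mapsto\tau-m$ in the proof of Theorem~\ref{MainTheorem} and Wood's normalization must be shown to yield a twice-composed map that is a well-defined similarity. After that, compatibility with gluing follows from the functoriality of all three bijections.
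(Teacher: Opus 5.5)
Your proposal follows the paper's own route. The paper likewise composes the Clifford-pair correspondence of Theorem~\ref{MainTheorem}, Wood's bijection and Wood's Proposition~6.1 (its maps $q\mapsto (q_W)_K$ and $Q\mapsto (Q_K)_W$), and verifies the involution by the explicit local computation $[a,b,c]\mapsto[c,-b,a]\mapsto[a,b,c]$. That computation is exact, so your ``up to a sign or unit'' hedge is unnecessary, and so is the attendant worry about gluing local similarities.

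Your observation that the dual form is canonically valued in $L\otimes(\det E)^{\otimes -2}$ rather than in $L$ is correct. It is sharper than the paper's globalization step, which asserts $\Gamma(S,\mathscr{Q}uad(E,L))\cong\Gamma(S,\mathscr{Q}uad(E^\vee,L))$. That assertion cannot hold literally, because the local rule $[a,b,c]\mapsto[c,-b,a]$ rescales by $u^{2}$ under a change of basis of $E$ of determinant $u$.
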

\begin{proof}
Let $ S $ be a general scheme. Let $ E $ be a locally free sheaf of rank 2 over $ S $, and let $ L $ be a line bundle (i.e., a locally free sheaf of rank 1) over $ S $.

We consider two important locally free sheaves on $ S $ whose global sections are given by:
\[
\Gamma(S, \mathscr{Q}uad(E, L)): \text{\{classical binary quadratic forms on $E$ with values in $L$\}}
\]
\[
\Gamma(S, \mathscr{W}lbf(E, L)) =\Gamma(S, \mathrm{\Sym^2(E)} \otimes L): \text{\{Wood's \textit{linear} binary quadratic forms on $E$ with values in $L$\}}.
\]

We begin by investigating the local case i.e. when $S$=Spec($R$).
\begin{itemize}
    \item \textbf{Step 1}:\\
    Let $R$ be a ring, and $E=Re_1 \oplus Re_2$ a free rank 2 $R$-module. Consider the binary quadratic form $q:E\rightarrow R$ defined by: \[q(xe_1+ye_2)=ax^2+bxy+cy^2,\] where $x,y,a,b,c\in R$. The even Clifford algebra $C_0(M, q, R)$ is generated by $ <1, e_1\cdot e_2>$ as $R$-module. Let us write $\tau = e_1\cdot e_2 $. Then we have the following relations:
    $$\tau^2=b\tau-ac \qquad \qquad\tau e_1=be_1-ae_2\qquad\qquad\tau e_2=ce_1.$$
    \item \textbf{Step 2}:\\
    Write $C=C_0(E, q, R)$. For the pair $(C,E)$ with $$\tau^2=b\tau-ac \qquad \qquad\tau e_1=be_1-ae_2\qquad\qquad\tau e_2=ce_1.$$
    the associated Wood's \textit{linear} binary quadratic form $q_W:E\rightarrow R$ defined by:
    \[q_W(xe_1+ye_2)=cx^2-bxy+ay^2.\] where $x,y,a,b,c\in R$.
    \item \textbf{Step 3}:\\
    Now, we can view Wood's form as a quadratic form on $E^\vee$ in the sense of Kneser. Let us write this as $(q_W)_K:E^\vee\rightarrow R$, where  \[(q_W)_K(xe_1^\vee+ye_2^\vee)=cx^2-bxy+ay^2.\] with $x,y,a,b,c\in R$. Then  the following relations hold: $$\tau^2=-b\tau-ac \qquad \qquad\tau e_1^\vee=-be_1^\vee-ce_2^\vee \qquad\qquad\tau e_2^\vee=ae_1^\vee.$$
    \item \textbf{Step 4}:\\
    Now, treating the pair $(C, E^\vee)$ with the relations  $$\tau^2=-b\tau-ac \qquad \qquad\tau e_1^\vee=-be_1^\vee-ce_2^\vee \qquad\qquad\tau e_2^\vee=ae_1^\vee.$$ as a Wood's pair, we have the associated Wood's \textit{linear} binary quadratic form on $E^\vee$, $((q_W)_K)_W:E^\vee\rightarrow R$ which is defined as:
    \[((q_W)_K)_W(xe_1^\vee+ye_2^\vee)=ax^2+bxy+cy^2.\] where $x,y,a,b,c\in R$.

    \item \textbf{Step 5}:\\
    Again, this Wood's \textit{linear} binary quadratic form  corresponds to a binary quadratic form on $(E^\vee)^\vee \cong E$ in the sense of Kneser. Let us write this as $(((q_W)_K)_W)_K:E\rightarrow R$, defined by \[(((q_W)_K)_W)_K(xe_1+ye_2)= ax^2+bxy+cy^2 .\] where $x,y,a,b,c\in R$.
    \end{itemize}
So, we have $q= (((q_W)_K)_W)_K.$

Since all constructions are local in nature—defined in terms of modules over rings—and the sheaves $\mathscr{Q}uad(E, L)$ and $\mathrm{Sym}^2(E) \otimes L$ are coherent and compatible with restriction to open subsets, and since the bijective correspondence between Wood’s linear binary quadratic forms on a vector bundle and classical quadratic forms on the dual bundle is globally defined, these constructions naturally globalize to the scheme $S$.
\subsection*{\textnormal{\textit{Duality Between Forms on $ E $ and $ E^\vee $}}}
We have shown that:
\begin{itemize}
    \item Given a classical binary quadratic form $ q $ on $ E $, we can construct a corresponding one on $ E^\vee $ via the intermediate steps involving Wood’s form.
    \item This construction yields a natural set theoretical isomorphism between the sheaves of classical binary quadratic forms:
$$
\Gamma(S,\mathscr{Q}uad(E, L)) \xrightarrow{\sim} \Gamma(S, \mathscr{Q}uad(E^\vee, L))
$$
defined explicitly by the assignment:
$$
q \mapsto (q_W)_K
$$
\end{itemize}
    
Analogously, for Wood’s linear binary quadratic forms, we obtain a corresponding natural set theoretical isomorphism:
$$
\Gamma(S, \mathscr{W}lbf(E, L)) \xrightarrow{\sim} \Gamma(S, \mathscr{W}lbf(E^\vee, L))
$$
given by:
$$
Q \mapsto (Q_K)_W
$$
\subsection*{\textnormal{\textit{Involutions on Pairs}}}
 The foregoing calculations demonstrate the existence of an involution on Clifford pairs. By employing an analogous method, one can likewise construct an involution on Wood’s pairs.
 \end{proof}
 \vspace{.05in}
\begin{rmk}
    Given a binary quadratic form $q:E\rightarrow L$, there is no natural way to directly define a corresponding binary quadratic form $q'$ on $E^\vee$. However, by following the steps outlined above, we can construction a binary quadratic form $q'$ on $E^\vee$ that is associated with the binary form $q$ on $E$.
\end{rmk}

\subsection*{Dual Binary Forms and Wood's \textit{linear} binary Forms}
Projective duality is a foundational principle in projective geometry, embodying a profound symmetry between geometric entities of complementary dimensions—most notably, points and hyperplanes. In the plane, this manifests as a correspondence between points and lines, whereby incidence relations are preserved under an involution that interchanges their roles. This duality extends naturally to higher-degree objects, particularly conics, giving rise to the concept of the dual conic , which parametrizes the set of tangent lines to a given conic. As elucidated in \cite[Chapter 8]{dual}, the dual conic can be understood as the envelope of tangents to the original curve, and algebraically, for non-degenerate it corresponds to the inverse (up to a scalar) of the symmetric matrix defining the conic. In this work, we adapt and extend this classical framework to the setting of binary quadratic forms. We demonstrate that Wood’s formulation of linear binary quadratic forms—introduced in the context of orbit parametrizations and higher composition laws—arises precisely as the dual, in a well-defined sense, of classical binary quadratic forms.

Let \( K \) be an algebraically closed field with characteristic \( \neq 2 \).  
Let \( E = \langle e_1, e_2 \rangle \) be a free rank 2 \( K \)-module.  
A binary quadratic form \( q: E \to K \) is defined by:
\[
q(xe_1 + ye_2) = ax^2 + bxy + cy^2, \quad a, b, c, x, y \in K.
\]
This can be written in matrix form as:
\[
q(x, y) = \begin{pmatrix} x & y \end{pmatrix} 
\begin{pmatrix}
a & b/2 \\
b/2 & c
\end{pmatrix}
\begin{pmatrix} x \\ y \end{pmatrix},
\]
where \( C = \begin{pmatrix} a & b/2 \\ b/2 & c \end{pmatrix} \).  

The zero set is \( Z(q) = \{(x, y) \mid q(x, y) = 0\} \).

For \( p = (p_0, p_1) \in Z(q) \), the tangent line \( l_p \) is given by:
\[
\nabla q(p) \cdot \begin{pmatrix} x \\ y \end{pmatrix} = 0,
\]
where the gradient is:
\[
\nabla q(p) = (2ap_0 + bp_1, bp_0 + 2cp_1) = 2 C \cdot p.
\]
Thus, the tangent line equation is:
\[
l_p: (C \cdot p) \cdot \begin{pmatrix} x \\ y \end{pmatrix} = 0.
\]
\subsection*{\textnormal{\textit{ Dual Form for Nondegenerate \( q \)}}}

Assume \( q \) is nondegenerate (\( \det(C) \neq 0 \)).  
Let \( \xi = C \cdot p \). Substituting \( p = C^{-1}\xi \) into \( p^T C p = 0 \):
\[
p^T C p = 0 \implies (C^{-1}\xi)^T C (C^{-1}\xi) = 0 \implies \xi^T C^{-1} \xi = 0.
\]
The dual form is:
\[
q^*(u, v) = \begin{pmatrix} u & v \end{pmatrix} C^{-1} \begin{pmatrix} u \\ v \end{pmatrix}.
\]

\textit{ Dual Form}:  
The inverse matrix is:
\[
C^{-1} = \frac{1}{\det(C)} \begin{pmatrix} c & -b/2 \\ -b/2 & a \end{pmatrix}.
\]
Thus,
\[
q^*(u, v) = \frac{1}{\det(C)} \left( c u^2 - b uv + a v^2 \right).
\]
This is a similarity class of Wood’s \textit{linear} binary form associatated with  the classical binary quadratic form $q$.
\subsection*{\textnormal{ \textit{Degenerate Case: \( q(x, y) = (\alpha x + \beta y)^2 \)}}}

We are given a degenerate binary quadratic form:
$$
q(x, y) = (\alpha x + \beta y)^2
$$
and our goal is to define its dual, even though $ q $ is degenerate (i.e., has a double root), by approximating it with nearby non-degenerate forms and taking a limit of their duals.

This method follows the philosophy of algebraic geometry: understand singular objects as limits of smooth ones.
\subsection*{\textnormal{\textit{ The Degenerate Form}}}
Let:
$$
q(x, y) = (\alpha x + \beta y)^2 = \alpha^2 x^2 + 2\alpha\beta xy + \beta^2 y^2
$$

This is a degree-2 binary form with coefficients:
$$
[a : b : c] = [\alpha^2 : 2\alpha\beta : \beta^2]
$$

It is degenerate because its discriminant is zero:
$$
\Delta = b^2 - 4ac = (2\alpha\beta)^2 - 4(\alpha^2)(\beta^2) = 0
$$

So we cannot directly apply the standard duality operation. Instead, we will approximate this form with non-degenerate forms.
\subsection*{\textnormal{\textit{ Construct a One-Parameter Family}}}
Define a family of binary quadratic forms:
$$
q_t(x, y) = (\alpha x + \beta y)^2 + t \cdot g(x, y)
$$
where:
$$
g(x, y) = \gamma x^2 + \delta xy + \varepsilon y^2
$$
is a non-degenerate binary quadratic form, meaning:
$$
\delta^2 - 4\gamma\varepsilon \ne 0
$$

Now expand $ q_t $:
$$
q_t(x, y) = \alpha^2 x^2 + 2\alpha\beta xy + \beta^2 y^2 + t(\gamma x^2 + \delta xy + \varepsilon y^2)
$$

Group terms:
$$
q_t(x, y) = (\alpha^2 + t\gamma)x^2 + (2\alpha\beta + t\delta)xy + (\beta^2 + t\varepsilon)y^2
$$

So write:
$$
q_t(x, y) = a(t)x^2 + b(t)xy + c(t)y^2
$$
where:
\begin{align*}
a(t) &= \alpha^2 + t\gamma \\
b(t) &= 2\alpha\beta + t\delta \\
c(t) &= \beta^2 + t\varepsilon
\end{align*}

Note:
$$
\lim_{t \to 0} q_t(x, y) = (\alpha x + \beta y)^2 = q(x, y)
$$

Thus, $ q_t $ is a family of binary quadratic forms approaching $ q $ as $ t \to 0 $.
\subsection*{ \textnormal{\textit{Dual of Each Non-Degenerate Form}}}
For each $ t \ne 0 $, since $ q_t $ is non-degenerate, we can define its dual as:
$$
q_t^*(x, y) = c(t)x^2 - b(t)xy + a(t)y^2
$$

Substitute in the expressions:
$$
q_t^*(x, y) = (\beta^2 + t\varepsilon)x^2 - (2\alpha\beta + t\delta)xy + (\alpha^2 + t\gamma)y^2
$$

Now take the limit as $ t \to 0 $:
$$
q^*(x, y) = \lim_{t \to 0} q_t^*(x, y) = \beta^2 x^2 - 2\alpha\beta xy + \alpha^2 y^2
$$

Factor:
$$
q^*(x, y) = (\beta x - \alpha y)^2
$$
\subsection*{\textnormal{\textit{Final Result}}}
If:
$$
q(x, y) = (\alpha x + \beta y)^2
$$
then its dual is:
$$
q^*(x, y) = (\beta x - \alpha y)^2
$$

This is again a similarity class of Wood’s \textit{linear} binary form associatated with  the classical binary quadratic form $q$.

This result confirms that in both cases the dual form is a similarity class of Wood's linear binary form associatated to the classical binary quadratic form $q$.

Thus:
\begin{quote}
    ``Over algebraically closed fields of characteristic not 2, algebraic duality implies geometric duality, and vice versa.''
\end{quote}

\subsection{Quaternion Structure on Wood's Objects}
For a Wood's \cite{Wood} pair $(C, E)$, it is straightforward to see that $ W = C \oplus E $ carries the structure of a rank 4 $ \OS$-module. A natural question then arises: under what conditions does $ W $ admit the structure of a quaternion algebra?
\vspace{.05in}
\begin{proposition}
    For a Wood's pair $(C, E)$, where both $C$ and $E$ are free $\OS$-modules, $W$ carries the structure of a quaternion algebra.
\end{proposition}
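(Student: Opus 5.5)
The plan is to reduce to the Clifford-algebra picture via Theorem~\ref{MainTheorem}, where the quaternion structure is already known. Since $C$ and $E$ are free, the local normal form from the surjectivity part of the proof of Theorem~\ref{MainTheorem} applies. We choose bases $\{1,\tau\}$ of $C$ and $\{e_1,e_2\}$ of $E$ such that
\[
\tau^2=b\tau-ac,\qquad \tau e_1=be_1-ae_2,\qquad \tau e_2=ce_1 .
\]
Here $a,b,c\in\Gamma(S,\OS)$. This uses Lemma~\ref{freedirectsummand}, a translation of $\tau$ by a scalar, and the traceability of $E$. With these constants we have the $\OS$-valued form $q(xe_1+ye_2)=ax^2+bxy+cy^2$. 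Case 1 of the surjectivity argument gives an algebra isomorphism $C\cong C_0(E,q,\OS)$, and Theorem~\ref{cliffordbimoduleunderlyingmodule} identifies the module $E$ with $C_1(E,q,\OS)$. Under these identifications the $C$-action on $E$ is the Clifford action. The trivialization $L\cong\OS$ is automatic here, because $C/\OS$ and $\wedge^2E$ are both free.

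Next we transport the full Clifford algebra structure. By Example~\ref{quaternion}, $C(E,q,\OS)=C_0(E,q,\OS)\oplus C_1(E,q,\OS)$ is a quaternion algebra over $S$. So the $\OS$-module isomorphism $W=C\oplus E\cong C(E,q,\OS)$ lets us pull back a multiplication on $W$. Concretely, the multiplication uses:
\begin{itemize}
\item $C\times C\to C$ for the algebra product;
\item the left $C$-action on $E$;
\item the right action, given by $e\cdot\gamma=\sigma(\gamma)e$ for the standard involution $\sigma$ of $C$;
\item the pairing $E\times E\to C$ given by $e_i e_i=q(e_i)$, $e_1e_2=\tau$, $e_2e_1=b-\tau$.
\end{itemize}
With this structure, $W$ is locally free of rank $4$ with basis $1,\tau,e_1,e_2$.

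It remains to produce the standard involution on $W$. For this we take the Clifford involution: it is $\sigma$ on $C$ and $-1$ on $E$, i.e.\ $\bar{x}=\mathrm{tr}(x)-x$ extended by $e\mapsto -e$. We then check that $x+\bar x\in\OS$ and $x\bar x\in\OS$ on the basis. For example, $e_1\bar e_1=-e_1^2=-a$. Mixed terms use the relations above; $(\tau+e_1)$-type cross terms cancel because $\tau e_1 + e_1\sigma(\tau)\cdot(-1)$ reduces to $0$ by the relations. If a convention mismatch appears (such as $e_1\bar e_1=-a$ versus $+a$), it is absorbed by replacing $q$ with $-q$, which is harmless because we only need existence. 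Uniqueness of the standard involution is not needed.

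The main obstacle is verifying associativity of the transported multiplication directly in terms of Wood's data, together with checking that the identification $E\cong C_1(E,q,\OS)$ really intertwines the bimodule actions. My first attempt will avoid a direct computation. Instead I will define the algebra map $C(E,q,\OS)\to W$ from the universal property of the full Clifford algebra, sending $e_i\mapsto e_i$; this is well defined because $q$ is $\OS$-valued and the pairing satisfies $x\cdot x=q(x)$. Associativity of $W$ then follows because this map is an $\OS$-module isomorphism by a rank count. Only if that map is hard to control will I fall back on checking associativity on the eight basis triples.
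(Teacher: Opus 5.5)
Your proposal is correct and follows the paper's proof. In the free case, Theorem~\ref{MainTheorem} gives an $\OS$-valued form $q$ with $C\cong C_0(E,q,\OS)$ and $E=C_1(E,q,\OS)$, so $W\cong C(E,q,\OS)$, and this is a quaternion algebra by Example~\ref{quaternion}. Once you pull back the multiplication along this $\OS$-module isomorphism, associativity and the standard involution come for free, so your last two paragraphs are unnecessary. In fact the universal-property map $C(E,q,\OS)\to W$ would be circular, since it presupposes that $W$ is already an associative algebra, and no sign correction is needed because $e_1\bar e_1=-a$ already lies in $\OS$.
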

\begin{proof}
In this case, using Theorem \ref{MainTheorem}, we have $ C \cong C_0(q) $ and $ E = C_1(q) $, where $ q \colon E \to \mathscr{O}_S $ is the binary quadratic form associated with the pair $ (C, E) $. Consequently, $ W \cong C_0(q) \oplus C_1(q) $. Now, by Example \ref{quaternion}, we know that $ C_0(q) \oplus C_1(q) $ carries the structure of a quaternion algebra; hence, $ W $ also inherits a quaternion algebra structure.

\end{proof}

\begin{proposition}
    For a Wood's pair $(C, E)$, if $C/\OS \cong \wedge^2E$, then $W$ has a quaternion algebra structure.
\end{proposition}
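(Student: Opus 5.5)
The plan is to reduce to the case already treated by Example~\ref{quaternion}, namely forms with trivial value bundle. By Theorem~\ref{MainTheorem}, the Wood's pair $(C,E)$ is a Clifford pair. Concretely, we have $C\cong C_0(E,q,L)$ and $E\cong C_1(E,q,L)$, where $q\colon E\to L$ is the binary form built in the surjectivity part of that proof, with
\[
L=\wedge^2E\otimes (C/\OS)^\vee .
\]
The hypothesis $C/\OS\cong \wedge^2E$ then gives $L\cong \wedge^2E\otimes(\wedge^2E)^\vee\cong\OS$. Fix such an isomorphism $\lambda\colon L\to\OS$. Then $(\mathrm{id}_E,\lambda)$ is a similarity from $(E,q,L)$ to $(E,\lambda\circ q,\OS)$.

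Next, by Proposition~\ref{induceevenclifford}(b) and Proposition~\ref{semilinear}(b), this similarity induces an $\OS$-algebra isomorphism $C_0(E,q,L)\cong C_0(E,\lambda q,\OS)$. It also induces a compatible semilinear isomorphism $C_1(E,q,L)\cong C_1(E,\lambda q,\OS)$. Hence the pair $(C,E)$ is isomorphic to the Clifford pair of the $\OS$-valued form $q'=\lambda q$.

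For an $\OS$-valued form, the full Clifford algebra $C(E,q',\OS)=C_0\oplus C_1$ is a genuine $\OS$-algebra. It is locally free of rank $4$, and by Example~\ref{quaternion} it is a quaternion algebra over $S$. Transporting its multiplication along the $\OS$-module isomorphism $W=C\oplus E\cong C_0(E,q',\OS)\oplus C_1(E,q',\OS)$ gives $W$ a quaternion algebra structure. Its standard involution is the transported one. By the rank-4 uniqueness remark for standard involutions, it glues globally.

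The main obstacle I expect is the identification $L\cong\OS$. It must hold as line bundles, not merely locally. This is immediate from the canonical pairing $\wedge^2E\otimes(\wedge^2E)^\vee\to\OS$ once the given isomorphism $C/\OS\cong\wedge^2E$ is dualized. The other point to check is that the multiplication on $W$ restricts to the given $C$-algebra structure and the given $C$-action on $E$. This holds because the isomorphisms of Theorem~\ref{MainTheorem} are, by construction, compatible with the module structures. Hence the quaternion structure genuinely extends the Wood's pair data rather than being an arbitrary transport.
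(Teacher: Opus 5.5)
Your proposal is correct and follows the paper's argument. You realize $(C,E)$ as a Clifford pair via Theorem~\ref{MainTheorem}, note that $L=\wedge^2E\otimes(C/\OS)^\vee$ is trivial under the hypothesis, and transport the quaternion structure of the full Clifford algebra from Example~\ref{quaternion}. Your explicit similarity $(\mathrm{id}_E,\lambda)$ replacing $L$ by $\OS$ simply spells out a step the paper leaves implicit.
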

\begin{proof}

In this case, the binary quadratic form $ q $ associated with the pair $ (C, E) $ takes values in the trivial line bundle $ \mathscr{O}_S $. As discussed in Example \ref{quaternion}, for binary quadratic forms valued in the structure sheaf $ \mathscr{O}_S $, the graded algebra $ C_0(q) \oplus C_1(q) $ naturally carries the structure of a quaternion algebra over $ S $. In the proof of Theorem \ref{MainTheorem}, it was shown that $ C \cong C_0(q) $ and $ E = C_1(q) $. It follows that the object $ W $, constructed from this decomposition, inherits the structure of a quaternion algebra over the base scheme $ S $.
\end{proof}

\begin{proposition}
    Let  $(C, E)$ be a Wood's pair over an arbitrary base scheme $S$. Then, after a suitable faithfully flat base change, $W$ has a quaternion algebra structure.
\end{proposition}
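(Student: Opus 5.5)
The plan is to reduce to the trivial-line-bundle case already treated in the preceding Proposition, where $C/\OS \cong \wedge^2E$ forces the binary form attached to $(C,E)$ to be $\OS$-valued. By Theorem~\ref{MainTheorem}, the pair $(C,E)$ corresponds to a binary form $q\colon E\to L$ with $L=\wedge^2E\otimes (C/\OS)^\vee$, together with isomorphisms $C\cong C_0(E,q,L)$ and $E\cong C_1(E,q,L)$ that are compatible with the module structures. The only obstruction to putting a quaternion structure on $W=C\oplus E$ is therefore that $L$ may be nontrivial, since the product $C_1\otimes C_1\to C_0\otimes L$ of Proposition~\ref{semilinear} does not land in $C_0$.

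Step one is to choose the base change. I would take an affine open cover $\{U_i\}$ of $S$ that trivializes both $C$ and $E$, and hence $L$, and set $T=\coprod_i U_i\to S$. This morphism is flat and surjective, hence faithfully flat (fpqc-locally; if quasi-compactness of the cover is wanted, restrict to a finite subcover when $S$ is quasi-compact). By Proposition~\ref{induceevenclifford}(c) and Proposition~\ref{semilinear}(a), the formation of $C_0$ and $C_1$ commutes with base change. Consequently, the pulled-back pair $(f^*C,f^*E)$ is again a Wood pair, namely the Clifford pair of $f^*(E,q,L)$. Traceability is also preserved under base change, because the trace commutes with base change.

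Step two: on $T$, both $f^*C$ and $f^*E$ are free. The preceding Proposition for free Wood pairs, or equivalently the observation that $f^*L\cong\mathscr{O}_T$ so that $f^*C/\mathscr{O}_T\cong\wedge^2 f^*E$, now shows that $f^*W=f^*C\oplus f^*E\cong C_0(q_T)\oplus C_1(q_T)=C(f^*E,q_T,\mathscr{O}_T)$. Here $q_T$ is the $\mathscr{O}_T$-valued form obtained by choosing a trivialization of $f^*L$. By Example~\ref{quaternion}, this full Clifford algebra is a quaternion algebra, since it is locally free of rank $4$ and carries the standard involution. The standard involution can be written explicitly: on $C_0$ it is the conjugation of the quadratic algebra, and on $C_1$ it is $-\mathrm{id}$.

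The main obstacle is that the algebra structure on $f^*W$ depends on the chosen trivialization $f^*L\cong\mathscr{O}_T$. Rescaling the trivialization by a unit $u$ changes the product $C_1\cdot C_1$ by $u$, so the structure is unique only up to this ambiguity. The statement, however, only asks that $f^*W$ have some quaternion structure, so this is harmless. I would not attempt to descend the structure back to $S$, since descent genuinely fails when $L$ is nontrivial, which is the phenomenon noted in the introduction. If one insists on a connected base, one can instead use the $\mathbb{G}_m$-torsor of trivializations of $L$, $T=\underline{\mathrm{Isom}}(\OS,L)\to S$. This map is faithfully flat and affine, and $L$ acquires a canonical trivialization on $T$, which yields a canonical quaternion structure on $f^*W$ by the same argument.
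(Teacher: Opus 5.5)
Your argument is correct, but your main route differs from the paper's.

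\textbf{Your route.} You base change along a Zariski cover $T=\coprod_i U_i\to S$ that trivializes $C$ and $E$ (hence $L$). You then invoke the preceding proposition on free Wood pairs, i.e.\ Case~1 of Theorem~\ref{MainTheorem} together with Example~\ref{quaternion}. This is elementary, needs no outside input, and makes faithful flatness evident. The price is that it shows the statement is really Zariski-local. Also, the quaternion structure on each piece depends on a chosen trivialization of $L|_{U_i}$, so, as you observe, the pieces do not glue to anything canonical.

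\textbf{The paper's route.} The paper performs a single global base change along the Laurent algebra $\mathcal{L}(L)=\bigoplus_{n\in\mathbb{Z}}L^{\otimes n}$ and sets $W'=W\otimes_{\OS}\mathcal{L}(L)$. It then quotes Lemmas~3.1 and~3.2 of \cite{bichselknus} to identify $W'$ with the classical Clifford algebra of a form valued in a trivial line bundle, which is a quaternion algebra. Since $\Spec_S\mathcal{L}(L)$ is the $\mathbb{G}_m$-torsor of trivializations of $L$, this is exactly the alternative you sketch at the end. Over it, $L$ becomes canonically trivial, so no choices enter. The cost is reliance on Bichsel--Knus. The paper also leaves implicit the easy fact that $\OS\to\mathcal{L}(L)$ is faithfully flat, which holds because $\OS$ is the degree-$0$ direct summand.

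\textbf{A shared caveat.} You certify the quaternion structure by rank~$4$ plus an explicit standard involution (conjugation on $C_0$, $-\mathrm{id}$ on $C_1$), which is the appropriate criterion. The paper's scheme-level definition also requires a local presentation $(a,b)_R$ with $a,b$ units. Taken literally, that condition fails for degenerate forms: for $q=0$ the Clifford algebra is the exterior algebra. This caveat affects the paper's argument and yours equally.
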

\begin{proof}

In Theorem~\ref{MainTheorem}, we established the existence of a binary quadratic form \( q: E \to L \) defined over the base scheme \( S \), such that the associated even Clifford algebra $C_0(E, q, L)$ is isomorphic to the given algebra \( C \), and the Clifford bimodule $C_1(E, q, L)$ coincides with \( E \) under the canonical identification provided in Theorem~\ref{cliffordbimoduleunderlyingmodule}. However, in general, when quadratic forms take values in a non-trivial line bundle \( L \), the direct sum \( C_0(E, q, L) \oplus C_1(E, q, L) \) does not naturally inherit the structure of an  algebra. This subtlety arises due to the twisting by \( L \), which obstructs the usual multiplication rules from extending coherently across the graded components; for a detailed exposition, see~\cite{bichselknus}.

To circumvent this issue, we consider the Laurent algebra associated with \( L \), denoted
\[
\mathcal{L}(L) = \bigoplus_{n \in \mathbb{Z}} L^{\otimes n},
\]
which forms a graded \( \OS \)-algebra under the natural tensor product. By extending scalars to this Laurent algebra, we define
\[
W' = W \otimes_{\OS} \mathcal{L}(L),
\]
where \( W \) the direct sum \( C_0(E, q, L) \oplus C_1(E, q, L) \)  associated with \( (E, q, L) \). According to \cite[Lemma~3.1 and Lemma~3.2]{bichselknus}, this extended algebra \( W' \) becomes isomorphic to the classical Clifford algebra of a binary quadratic form \( q' \) taking values in a trivial line bundle. It is a well-established result in the theory of Clifford algebras that such classical Clifford algebras—associated with  quadratic forms over trivially twisted line bundles—naturally carry the structure of a quaternion algebra.
\end{proof}

\subsection{Recovering Picard Group -- Relation to the Work of Dallaporta }
Let $S$ be a scheme and $C$ be a quadratic $\OS$-algebra. A natural question arises: does there exist a locally free rank 2 $\OS$-module $E$ that is also a \textit{traceable} $C$-module?
\vspace{.05in}
\begin{proposition}\label{existence of traceable module}
Given any quadratic algebra $C$, there exists a \textit{traceable} $C$-module $E$.
    \end{proposition}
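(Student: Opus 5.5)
The natural candidate is $E = C$ itself, regarded as a left module over itself via multiplication. The plan is to check the two conditions in the definition of a traceable module directly for this choice, working with the double cover $f\colon X=\underline{\Spec}_S(C)\to S$ and $\mathcal{M}=\mathscr{O}_X$, so that $f_*\mathcal{M}=C$.

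\emph{Step 1: rank condition.} The first condition asks that $f_*\mathcal{M}$ be a locally free $\OS$-module of rank $2$. Since $f_*\mathcal{M}=C$ and $C$ is a quadratic $\OS$-algebra, this holds by the definition of a quadratic algebra.

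\emph{Step 2: trace condition.} The second condition asks that the composite
\[
C\xrightarrow{\ \mathrm{mult.}\ }\underline{\End}_{\OS}(C)\xrightarrow{\ \mathrm{tr}\ }\OS
\]
computed for the algebra agree with the same composite computed for the module. Here the module is $C$ with its left regular action, so the two composites are literally the same morphism. Equivalently, $C$ is a locally free $C$-module of rank $1$ (indeed free of rank $1$), so Lemma~\ref{locally free traceable} applies directly.

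\emph{Main obstacle.} There is essentially no obstacle. The only point requiring care is that traceability is local and the trace map is defined locally on a trivializing cover $\{U_i\}$ where $C|_{U_i}$ is free. The local trace maps are intrinsic, being traces of endomorphisms and hence independent of the chosen basis, so they glue. Once this is noted, the identity of the two maps is immediate.

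\emph{Optional richer construction.} If a less trivial example were wanted, one could also produce one from Theorem~\ref{MainTheorem}. One would choose a binary form whose $C_0$ is $C$, though that is Theorem~\ref{existence of form}, which appears later. The argument above is self-contained and does not need it.
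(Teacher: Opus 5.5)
Your proposal is correct and is essentially the paper's argument. The paper applies Lemma~\ref{locally free traceable} to conclude that every $E\in\mathrm{Pic}(C)$ is traceable, and your witness $E=C$ is the trivial element of $\mathrm{Pic}(C)$. Choosing it explicitly simply makes visible the nonemptiness of $\mathrm{Pic}(C)$, which the paper's one-line proof uses without stating.
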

\begin{proof}
   From Lemma~\ref{locally free traceable}, it follows that every locally free $ C $-module of rank $ 1 $ is traceable. Consequently, any invertible $ C $-module --- that is, any object $ E \in \mathrm{Pic}(C) $ --- is a traceable $ C $-module.
\end{proof}

 Another natural question that arises is: given any quadratic algebra $C$, is there a binary form $q$ such that its generalized even Clifford algebra is isomorphic to $C$?
    
\vspace{.05in}
\begin{theorem}\label{existence of form}
    Given a quadratic $\OS$-algebra $C$ over an arbitrary base scheme $S$, there exists a binary quadratic form with values in a suitable line bundle, whose generalized even Clifford algebra is isomorphic to $C$.
        \end{theorem}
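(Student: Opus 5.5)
The plan is to reduce the statement to Theorem~\ref{MainTheorem} by producing a suitable \textit{traceable} $C$-module. Proposition~\ref{existence of traceable module} already tells us that one exists. Concretely, I will take $E := C$ itself, viewed as a left module over itself via multiplication. Since $C$ is a quadratic $\OS$-algebra, $C$ is locally free of rank $2$ as an $\OS$-module and locally free of rank $1$ as a $C$-module. Lemma~\ref{locally free traceable} therefore shows that $C$ is a traceable $C$-module; here this is literally tautological, because the two trace maps coincide by definition. So $(C, C)$ is a Wood pair.

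Next I apply the surjectivity part of Theorem~\ref{MainTheorem} to the pair $(C, C)$. This produces the line bundle
\[
L = \wedge^2 C \otimes (C/\OS)^\vee ,
\]
which is a line bundle by Lemma~\ref{freedirectsummand}, together with a binary quadratic form $q \colon C \to L$ and an $\OS$-algebra isomorphism $C_0(C, q, L) \cong C$. That isomorphism is exactly the claim of Theorem~\ref{existence of form}.

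As a more explicit alternative, I would note that the norm form $n_C \colon C \to \OS$ is already a binary quadratic form with values in the trivial line bundle. Lemma~\ref{normCliffordalgebra} gives $C_0(C, n_C, \OS) \cong C$ directly. Equivalently, Proposition~\ref{uniline} identifies this norm form with the universal norm form $j_C$. This alternative even yields $L = \OS$. It is also consistent with the first construction, because locally one checks that $\wedge^2 C \cong C/\OS$.

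The main point needing care is that the norm map $n_C$ is globally defined on an arbitrary scheme, where $\OS \to C$ need not split (Remark~\ref{remark:failure_of_splitting}). I would handle this by defining $n_C$ locally as the determinant of left multiplication; these local definitions glue because the determinant is basis independent. Alternatively, I can bypass the issue entirely by relying on the first route through Theorem~\ref{MainTheorem}, which is already global. With either route the remaining steps are immediate, so I expect no serious obstacle.
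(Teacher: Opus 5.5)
Your proposal is correct and is essentially the paper's own argument. The paper takes an arbitrary $E \in \mathrm{Pic}(C)$, notes via Lemma~\ref{locally free traceable} that it is traceable, and applies Theorem~\ref{MainTheorem}, with the universal norm form of Theorem~\ref{simprimi} mentioned as an alternative. You specialize to the trivial class $E = C$, which costs nothing and makes the output explicit: $L \cong \OS$, since $\wedge^2 C \cong C/\OS$ canonically via $\bar{x} \mapsto 1 \wedge x$, and the form is, up to similarity, the norm form $n_C$ of Proposition~\ref{uniline} and Lemma~\ref{normCliffordalgebra}.
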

\begin{proof}
    Let $E \in$ Pic($C$). Then by Lemma \ref{locally free traceable}, $E$ is a \textit{traceable} $C$-module. Consider the pair $(C, E)$. By Theorem \ref{MainTheorem}, we obtain a binary form $q: E \rightarrow L$, for a suitable line bundle $L$ on $S$ such that $C_0(E, q, L)\cong C$. Alternatively, one may also proceed by following the approach outlined in Theorem~\ref{simprimi} using the universal norm form.
\end{proof}

In light of the proof of Theorem \ref{existence of form}, a natural  question emerges: Given a quadratic algebra $ C $ over the base scheme $ S $, how can one systematically parametrize its Picard group in terms of quadratic forms? This inquiry invites an exploration into the  interplay between the algebraic structure of $ C $ and the algebraic properties encoded by quadratic forms associated with $ S $. In his paper \cite[Theorem 3.24]{dallaporta}, William Dallaporta provided a parametrization of the Picard group associated with a given quadratic algebra $ C $. However, his approach relies on constructions developed in Wood's work \cite{Wood} (see also \cite[Definition 3.12]{dallaporta}), which does not incorporate the Clifford-theoretic framework. A fundamental distinction arises as Wood's formulation uses \textit{linear} binary quadratic forms instead of the classical notion of binary quadratic forms, as defined, for instance, by Kneser; see Proposition \cite[Proposition 6.1]{Wood} for further details. Consequently, Dallaporta's parametrization is rooted in \textit{linear} binary quadratic forms, which differ substantially from our point of view of using classical binary quadratic forms. Furthermore, both Dallaporta and Wood do not use the Clifford-theoretic perspective, which is central to ours. 
\vspace{.05in}
\begin{theorem}\label{Quotient Picard}
Let $ S $ be a general scheme and  $ C $ be a quadratic algebra over $ S $. The natural map 
\[(E, q, L)\rightarrow C_1(E, q, L)\]
induces
\[
\left\{
\begin{array}{c}
\text{Similarity classes of primitive binary quadratic forms (E, q, L) over } S \\
\text{having even Clifford algebra $C_0(E, q, L)$ isomorphic to } C
\end{array}
\right\}
\longleftrightarrow \mathrm{Pic}(C) / \sim,
\]
where for $ E, E' \in \mathrm{Pic}(C) $,  $ E \sim E' $ if and only if there exists an automorphism $ \varphi $ of $ C $ over $ S $ such that $ E' \cong \varphi^*E $ as $ C $-modules. Here, $ \varphi^*E $ denotes the $ C $-module $ E $ whose structure is locally given by $ \alpha \cdot e = \varphi(\alpha)e $, and  $\mathrm{Pic}(C) / \sim$ denotes the set of $\sim$-equivalence classes in $\mathrm{Pic}(C)$.
\end{theorem}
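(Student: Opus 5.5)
The plan is to deduce this from Theorem~\ref{classifi1} (equivalently Theorem~\ref{simprimi}) by fixing the algebra component of the pair. Theorem~\ref{classifi1} identifies similarity classes of primitive binary forms with isomorphism classes of pairs $(C', E)$, where $C'$ is a quadratic algebra and $E$ is a locally free rank $1$ $C'$-module. Restricting to forms with $C_0(E,q,L)\cong C$ therefore identifies the left-hand side with the set of isomorphism classes of pairs $(C',E)$ with $C'\cong C$.

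The main task is then to identify that set with $\mathrm{Pic}(C)/\sim$.

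\textbf{Construction of the map.} Given a primitive form $(E,q,L)$, choose an $\OS$-algebra isomorphism $\iota\colon C\to C_0(E,q,L)$. Transport the $C_0(E,q,L)$-module structure on $C_1(E,q,L)=E$ to a $C$-module structure via $\iota$, that is, $\alpha\cdot e=\iota(\alpha)e$. By Proposition~\ref{primitive} this is a locally free rank $1$ $C$-module, hence an element of $\mathrm{Pic}(C)$.

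\textbf{Well-definedness.} Two points need checking.
\begin{enumerate}
\item Independence of the choice of $\iota$. Another choice has the form $\iota'=\iota\circ\varphi$ with $\varphi\in\mathrm{Aut}(C)$, and this changes the module to $\varphi^*E$. So the class in $\mathrm{Pic}(C)/\sim$ is unchanged.
\item Invariance under similarity. A similarity $(E,q,L)\to(E',q',L')$ gives, by Propositions~\ref{induceevenclifford} and \ref{semilinear}, an algebra isomorphism $C_0(\varphi,\lambda)$ and a semilinear module isomorphism. Composing with the chosen identifications of $C$ on both sides shows $E'\cong\phi^*E$ for some automorphism $\phi$ of $C$.
\end{enumerate}

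\textbf{Surjectivity.} For $E\in\mathrm{Pic}(C)$, the pair $(C,E)$ arises from a primitive form by Theorem~\ref{classifi1}, which can be realised as the universal norm form of Theorem~\ref{simprimi}. That theorem supplies an isomorphism $C_0\cong C$ carrying $C_1$ to $E$.

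\textbf{Injectivity.} Suppose the modules attached to $(E,q,L)$ and $(E',q',L')$ satisfy $E'\cong\varphi^*E$ for some $\varphi\in\mathrm{Aut}(C)$. Then the pairs $(C_0(q),E)$ and $(C_0(q'),E')$ are isomorphic as pairs: the algebra isomorphism is $\iota'\circ\varphi^{-1}\circ\iota^{-1}$, or its inverse depending on conventions, and the module isomorphism is the given $\OS$-isomorphism, which is compatible with these module structures by construction. Injectivity in Theorem~\ref{classifi1} then gives that $q$ and $q'$ are similar.

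\textbf{Expected obstacle.} The main difficulty is the bookkeeping of pullback directions, namely $\varphi$ versus $\varphi^{-1}$ in $\varphi^*$. This is harmless because $\sim$ is symmetric: $E'\cong\varphi^*E$ if and only if $E\cong(\varphi^{-1})^*E'$.

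A second point to confirm is that $\sim$ really is an equivalence relation on isomorphism classes in $\mathrm{Pic}(C)$. This holds because $(\varphi\psi)^*=\psi^*\varphi^*$ and pullback preserves $C$-module isomorphisms. Once both points are settled, the statement is just the fibre of the bijection of Theorem~\ref{classifi1} over the isomorphism class of $C$.
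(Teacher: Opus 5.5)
Your proposal is correct and follows essentially the same route as the paper. The paper also deduces the statement from the classification of pairs (Theorem~\ref{MainTheorem}, restricted to primitive forms via Proposition~\ref{primitive} and Lemma~\ref{locally free traceable}) in three steps: well-definedness, surjectivity and injectivity. Your explicit handling of the choice of identification $\iota\colon C\to C_0(E,q,L)$, and of the resulting $\mathrm{Aut}(C)$-ambiguity, makes precise a point that the paper's proof passes over silently.
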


\begin{proof}
We will proceed step by step.
\subsection*{\textnormal{\textit{Step 1: Well-definedness of the map.}}}
 Suppose we start with a primitive binary quadratic form $ (E, q, L) $ over $ S $ having even Clifford algebra isomorphic to $ C $. By Proposition \ref{primitive}, the module $ C_1(E, q, L) = E $ is a locally free rank 1 $ C $-module. Hence, $ E \in \mathrm{Pic}(C) $.

Now, consider two similar primitive binary quadratic forms $ (E_1, q_1, L_1) $ and $ (E_2, q_2, L_2) $ over $ S $, both having isomorphic even Clifford algebra $ C $. From Theorem \ref{MainTheorem} , the pairs $ (C, E_1) $ and $ (C, E_2) $ are isomorphic as pairs. By the definition of isomorphic pairs, it follows that $ E_1 \sim E_2 $ in $ \mathrm{Pic}(C) $.
\subsection*{\textnormal{\textit{Step 2: Surjectivity of the map.}}}
 Conversely, suppose $ E \in \mathrm{Pic}(C) $. Then $ E $ is a locally free rank 1 $ C $-module. By Lemma \ref{locally free traceable}, $ E $ is a \textit{traceable} $ C $-module. For the pair $ (C, E) $, we can associate a binary quadratic form using Theorem \ref{MainTheorem}. Alternatively, one may also proceed by following the approach outlined in Theorem~\ref{simprimi}. Furthermore, the corresponding binary quadratic form is primitive, as shown in Proposition \ref{primitive}.
\subsection*{\textnormal{\textit{Step 3: Injectivity of the map.}}}
Now, if $ E_1 \sim E_2 $ in $ \mathrm{Pic}(C) $, then by the definition of the equivalence relation $ \sim $, the pairs $ (C, E_1) $ and $ (C, E_2) $ are isomorphic. By Theorem \ref{MainTheorem} , the corresponding primitive binary quadratic forms are similar.

Thus, we have established the desired bijection.
\end{proof}

Given a quadratic algebra \( C \), Theorem \ref{Quotient Picard} nearly parametrizes its Picard group, with the main obstacle being the presence of non-trivial automorphisms of \( C \).  To address this issue, we introduce a rigidification of quadratic algebras by eliminating these automorphisms via the notion of orientation. When 2  is not a zero divisor in $\Gamma(S, \OS)$, this approach is sufficient to fully recover the Picard group described in Theorem~\ref{Quotient Picard}.

\begin{definition}[\textbf{Oriented Quadratic Algebra}]
\label{def:oriented-quadratic-algebra}
Following \cite[Definition 3.2]{dallaporta}, let $ N $ denote a locally free $\OS$-module of rank 1 over a base scheme $ S $. An \emph{$ N $-oriented quadratic algebra} is defined as a pair $(C, \theta)$, where:
\begin{enumerate}
    \item $C$ is a quadratic $\OS$-algebra, meaning that $C$ is an $\OS$-algebra equipped with a structure such that the natural map $\OS \to C$ makes $C$ a finitely generated, locally free $\OS$-module of rank 2.
    
    \item $\theta: C/\OS \xrightarrow{\simeq} N^\vee$ is an isomorphism of $\OS$-modules, referred to as the \emph{orientation} of $C$. Here, $N^\vee=\mathscr{H}om_{\OS}(N, \OS)$ denotes the dual module of $N$, and $C/\OS$ represents the quotient module obtained by modding out the image of the structural morphism $\OS \to C$.
\end{enumerate}
\end{definition}
The orientation $\theta$ provides additional structure on $C$ by identifying the quotient $C/\OS$ with the dual of the rank-1 module $N$, thus encoding compatibility between the algebraic structure of $C$ and of $N$.

Given two $N$-oriented quadratic algebras $(C, \theta)$ and $(C', \theta')$, an \emph{isomorphism of oriented quadratic algebras} is an isomorphism $\psi: C \xrightarrow{\simeq} C'$ of $\OS$-algebras such that $\theta = \theta' \circ \overline{\psi}$, where $\overline{\psi}: C/\OS \xrightarrow{\simeq}  C'/\OS$ is the induced map on the quotients derived from $\psi$. This compatibility condition ensures that the orientations $\theta$ and $\theta'$ are preserved under the isomorphism $\psi$.

\vspace{.05in}
\begin{proposition}\cite[Proposition~3.6]{dallaporta}\label{Automorphism}
Let $S$ be a scheme such that $2$ is not a zero divisor in $\Gamma(S, \OS)$, and let $N \in \mathrm{Pic}(S)$. Then the group of automorphisms of an $N$-oriented quadratic algebra over $S$ is trivial.
\end{proposition}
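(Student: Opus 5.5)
The plan is to show that an automorphism $\psi$ of $(C,\theta)$ is the identity, and the condition $\theta=\theta\circ\overline{\psi}$ means $\overline{\psi}=\mathrm{id}$ on $C/\OS$. Since $\psi$ is an $\OS$-algebra map fixing $\OS\cdot 1_C$, the map $\psi-\mathrm{id}\colon C\to C$ vanishes on $\OS$ and, by $\overline{\psi}=\mathrm{id}$, lands in $\OS$. Hence it factors as an $\OS$-linear map $\delta\colon C/\OS\to\OS$, so $\psi(x)=x+\delta(\bar x)$. Equality of morphisms of sheaves is local, so it suffices to show $\delta=0$ on a cover by affine opens $U$ on which $C$ is free. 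Note that $2$ being a non-zero-divisor in $\Gamma(S,\OS)$ only gives this on global sections. I will need it locally, and I flag this below.

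On such a $U$, by Lemma~\ref{freedirectsummand}, write $C|_U=\OU\oplus\tau\OU$ with $\tau^2=r\tau+s$. Then $\psi(\tau)=\tau+t$ for some $t\in\Gamma(U,\OU)$, and $\psi$ is determined by $t$. The multiplicativity of $\psi$ applied to the relation of $\tau$ gives
\[(\tau+t)^2=r(\tau+t)+s .\]
Expanding and using $\tau^2=r\tau+s$, this becomes $2t\tau+t^2=rt$. Comparing coefficients in the basis $\{1,\tau\}$ yields $2t=0$ and $t^2=rt$. If $2$ is a non-zero-divisor on $\Gamma(U,\OU)$, then $t=0$, so $\psi|_U=\mathrm{id}$. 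Gluing over the cover gives $\psi=\mathrm{id}$.

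Alternatively, and more cleanly globally, I can argue with $\delta$ directly. From $\psi(x)\psi(y)=\psi(xy)$ one computes, locally, that $2\delta(\bar\tau)=0$, so $2\delta=0$ as a section of $(C/\OS)^\vee=N$. It remains to conclude $\delta=0$ from $2\delta=0$ in $\Gamma(S,N)$, together with the hypothesis that $2$ is a non-zero-divisor.

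The main obstacle is exactly this passage from ``$2$ is a non-zero-divisor in $\Gamma(S,\OS)$'' to ``multiplication by $2$ is injective on the relevant sections.'' That means sections of the line bundle $N$, or of $\OU$ on small opens. Injectivity on global functions does not obviously imply injectivity on stalks in general. My first attempt is to follow \cite[Proposition~3.6]{dallaporta}, reading the hypothesis as $2$ being a non-zero-divisor in $\OS$, i.e.\ locally, which is what their argument uses. If only the global hypothesis is available, I would try to show that $\ker(2\colon \OS\to\OS)$ is a quasi-coherent ideal sheaf whose global sections vanish, and then deduce that it vanishes. This last step can fail for non-affine $S$, so I would either restrict to the local reading of the hypothesis or to affine $S$, where the two readings agree.
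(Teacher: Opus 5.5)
Your local computation is right, and it is the natural argument. The paper itself gives no proof here, only the citation to Dallaporta. On an open $U$ with $C|_U=\OU\oplus\tau\OU$ and $\tau^2=r\tau+s$, multiplicativity gives exactly $2t=0$ and $t^2=rt$. These force $t=0$ as soon as $2$ is a non-zero-divisor on $\Gamma(U,\OU)$.

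That condition holds on every open if $2$ is a non-zero-divisor in $\OS$ (equivalently, in every stalk). In particular it holds when $S$ is affine: injectivity of multiplication by $2$ on $\Gamma(S,\OS)$ is preserved by the flat restriction maps $\Gamma(S,\OS)\to\Gamma(U,\OU)$ for affine opens $U$. Under that reading your first argument is a complete proof. Your global variant is the same argument, since what it needs is that $\Gamma(S,N)$ has no $2$-torsion.

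Your doubt about the literal hypothesis is well founded, and the problem lies in the statement, not your method. If $2$ is only assumed to be a non-zero-divisor in $\Gamma(S,\OS)$, the statement is false. So your fallback, deducing $\ker(2\colon\OS\to\OS)=0$ from the vanishing of its global sections, cannot succeed.

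For a counterexample, let $X=\mathbb{P}^1_{\mathbb{Z}}$ and let $i\colon P=\mathbb{P}^1_{\mathbb{F}_2}\hookrightarrow X$ be the fibre over $2$. Put $\mathcal{M}=i_*\mathscr{O}_P(-1)$, and let $\pi\colon S=\underline{\Spec}_X(\mathscr{O}_X\oplus\mathcal{M})\to X$ with $\mathcal{M}^2=0$. Then $\Gamma(S,\OS)=\mathbb{Z}\oplus H^0(P,\mathscr{O}_P(-1))=\mathbb{Z}$, so the hypothesis holds, while $\OS$ has $2$-torsion locally.

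Now take $N=\pi^*\mathscr{O}_X(1)$. Then $\Gamma(S,N)=\mathbb{Z}^2\oplus H^0(P,\mathscr{O}_P)=\mathbb{Z}^2\oplus\mathbb{F}_2$. Let $\delta\neq0$ be the $\mathbb{F}_2$-section, so that $2\delta=0$, and $\delta\otimes\delta=0$ because $\mathcal{M}^2=0$. Take $C=\OS\oplus N^\vee$ with $N^\vee\cdot N^\vee=0$ and orientation $\theta=\mathrm{id}_{N^\vee}$. Then $\psi(a+x)=a+\delta(x)+x$ is a non-identity automorphism of $(C,\theta)$: in your notation $r=s=0$, and $t=\delta(\tau)$ satisfies $2t=0=t^2$.

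So the hypothesis must be that $2$ is a non-zero-divisor in $\OS$ (or that $S$ is affine), which is exactly the restriction you proposed. With it, your proof is complete.
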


\vspace{.05in}
\begin{definition}[\textbf{Twisted Quadratic Form}]
Let \( N \in \operatorname{Pic}(S) \). An \( N \)-twisted binary quadratic form, represented as a pair \( (E, q) \), is the binary quadratic form obtained by setting \( L = \Lambda^2 E \otimes_ \OS N \) in Definition \ref{quad form}. Specifically, \( q \) is a global section of  $\mathscr{Q}uad(E, \Lambda^2 E \otimes_ \OS N)$.
% \( \operatorname{Sym}^2 E^\vee \otimes_\OS \Lambda^2 E \otimes_ \OS N \).
    
\end{definition}
\vspace{.01in}
\begin{definition}[\textbf{Twisted Primitive Binary Form}]
Let \( N \in \operatorname{Pic}(S) \). An \( N \)-twisted binary quadratic form is said to be primitive if it is primitive when viewed as a binary quadratic form.
\end{definition}
\vspace{.05in}
\begin{theorem}
Let \(N \in \mathrm{Pic}(S)\). There is a functorial bijection
\[
\left\{
\begin{array}{c}
\text{Similarity classes of} \\
\text{primitive } N\text{-twisted binary} \\
\text{quadratic forms over } S
\end{array}
\right\}
\longleftrightarrow
\left\{
\begin{array}{c}
\text{Isomorphism classes of } (C, \theta, E), \\
\text{with } (C, \theta) \text{ an } N\text{-oriented} \\
\text{quadratic algebra over } S, \\
\text{and } E \text{ an invertible } C\text{-module}
\end{array}
\right\}.
\]
An isomorphism \((C, \theta, E) \cong (C', \theta', E')\) is a pair \((\psi, \phi)\) where \(\psi: (C, \theta) \cong (C', \theta')\) is an isomorphism of \(N\)-oriented algebras, and \(\phi: E \otimes_C C' \cong E'\) is an isomorphism of \(C'\)-modules.
\end{theorem}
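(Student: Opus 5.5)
The plan is to deduce this from Theorem~\ref{classifi1} (equivalently Theorem~\ref{simprimi}) by adding the orientation as extra data on both sides. \emph{Forward map.} Let $(E,q)$ be a primitive $N$-twisted form, with $L=\Lambda^2E\otimes N$. Set $C=C_0(E,q,L)$. By Proposition~\ref{quotientclifford} we have a canonical isomorphism
\[
C/\OS\cong \Lambda^2E\otimes L^\vee\cong \Lambda^2E\otimes(\Lambda^2E)^\vee\otimes N^\vee\cong N^\vee ,
\]
and we take $\theta$ to be this composite. By Proposition~\ref{primitive}, $E=C_1(E,q,L)$ is an invertible $C$-module, so we send $(E,q)$ to $(C,\theta,E)$. \emph{Reverse map.} Start from $(C,\theta,E)$ and use the surjectivity construction of Theorem~\ref{MainTheorem}, which produces a form with values in $L=\wedge^2E\otimes(C/\OS)^\vee$. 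Composing with $(\theta^\vee)^{-1}$, i.e. using $(C/\OS)^\vee\cong N$, turns it into a form with values in $\Lambda^2E\otimes N$, so it is $N$-twisted. It is primitive by Proposition~\ref{primitive} and Lemma~\ref{locally free traceable}.

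Next I would check that both maps respect the equivalence relations. Take a similarity $(\varphi,\mu)$ between $N$-twisted forms. Here I would read similarity of $N$-twisted forms in the natural way: the similitude on $\Lambda^2E\otimes N$ is $\wedge^2\varphi\otimes\mathrm{id}_N$. If that is not how the paper intends it, I would adopt it as the working definition. The induced map $C_0(\varphi,\mu)$ of Proposition~\ref{induceevenclifford} acts on $C/\OS\cong\Lambda^2E\otimes L^\vee$ by $\wedge^2\varphi\otimes(\mu^\vee)^{-1}$, which makes it compatible with the two orientations. The semilinear map $C_1(\varphi,\mu)$ of Proposition~\ref{semilinear} then gives $\phi:E\otimes_CC'\cong E'$. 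In the other direction, an isomorphism $(\psi,\phi)$ of triples yields the commutative square from the injectivity part of Theorem~\ref{MainTheorem}. The line-bundle isomorphism $\wedge^2\phi\circ-\circ\bar\psi^{-1}$ becomes exactly $\wedge^2\phi\otimes\mathrm{id}_N$ once $\theta=\theta'\circ\bar\psi$ is imposed. So the two forms are similar as $N$-twisted forms.

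Mutual inverseness then comes from Theorem~\ref{classifi1} together with one local check. If we start from $(C,\theta,E)$, build $q$, and take its Clifford triple, Theorem~\ref{MainTheorem} gives an algebra isomorphism $C_0(E,q,L)\cong C$ (Case 2 of surjectivity) compatible with the module structures. The remaining point is that this isomorphism carries the canonical orientation to $\theta$. I would verify it in the free case with the explicit basis: $\tau=e_1e_2$ maps to $(e_1\wedge e_2)\otimes z^\vee$, and the twist was chosen precisely so that this corresponds to $\theta(\bar\tau)$. Since the statement concerns local identifications of line bundles, this suffices. Functoriality follows from the base-change statements in Propositions~\ref{induceevenclifford} and~\ref{semilinear}.

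I expect the main obstacle to be the orientation bookkeeping, specifically the sign. The canonical map $\alpha(\bar\gamma\otimes e_1e_2)=\gamma e_2\wedge e_1$ introduces $e_2\wedge e_1=-e_1\wedge e_2$, so the round-trip orientation may come out as $-\theta$ rather than $\theta$. I would resolve this first, by fixing the convention for the isomorphism $C/\OS\cong N^\vee$ to include this sign, or equivalently by twisting by $-1$ in the reverse map. After that, the rest is routine, because Theorem~\ref{classifi1} already supplies the underlying bijection.
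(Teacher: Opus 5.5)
Your route is the paper's. Its proof just says the result is a direct consequence of Proposition~\ref{quotientclifford} and Theorem~\ref{MainTheorem}, and your version fills in what that leaves implicit: the orientation $C_0/\OS\cong\wedge^2E\otimes L^\vee\cong N^\vee$, the reverse map via $(\theta^\vee)^{-1}$, and similarities of $N$-twisted forms taken as pairs $(\varphi,\wedge^2\varphi\otimes\mathrm{id}_N)$. That last reading is actually forced. With arbitrary $\mu$, the triples $(C,\theta,E)$ and $(C,u\theta,E)$ for a unit $u\neq\pm1$ are in general non-isomorphic, yet they give the similar forms $q$ and $u^{-1}q$.

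One step is not covered by what you cite: the round trip form $\to$ triple $\to$ form. Theorem~\ref{classifi1} only supplies a similarity $(\varphi,\mu)$ with $\mu$ arbitrary, which is strictly weaker than what you need. Your restricted similarities preserve $\Delta(q)$ as a section of $L^{\otimes 2}\otimes(\wedge^2E)^{\otimes -2}\cong N^{\otimes 2}$, while $\Delta(uq)=u^2\Delta(q)$. So over $\Spec\mathbb{Q}$ with $N=\OS$, the forms $x^2+y^2$ and $2x^2+2y^2$ are similar in the paper's sense but not as $N$-twisted forms.

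Your two well-definedness checks plus the triple-side round trip therefore only show that the forward map is surjective and the reverse map injective. For injectivity of the forward map you must show that the form $\alpha$ rebuilt from $(C_0(E,q,L),E)$ is $q$ itself, under the identification $L\cong\wedge^2E\otimes(C_0/\OS)^\vee$ coming from Proposition~\ref{quotientclifford}. Concretely, you need $\gamma v\wedge v=f(q(v))\,u\wedge w$ whenever $\gamma$ represents the class of $i_{C_0}(u\otimes w\otimes f)$ in $C_0/\OS$. Locally this reads $\tau(xe_1+ye_2)\wedge(xe_1+ye_2)=(ax^2+bxy+cy^2)\,e_1\wedge e_2$, which follows from $\tau e_1=be_1-ae_2$ and $\tau e_2=ce_1$.

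With this identity both round trips are the identity on the nose. It also shows that no sign appears when $\alpha$ is read on the diagonal as $\bar\gamma\mapsto\gamma v\wedge v$, so the $-\theta$ issue you anticipate does not occur.
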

\begin{proof}
This result is a direct consequence of Proposition \ref{quotientclifford} and Theorem \ref{MainTheorem}.
\end{proof}
\vspace{.05in}
\begin{theorem}\label{Picard}
Let $S$ be a general scheme for which $2$ is not a zero divisor in $\Gamma(S, \OS)$. Let $C$ be a quadratic algebra over $S$. There exists a set-theoretic bijection between:
\[
\left\{
\begin{array}{c}
\text{Similarity classes of primitive } N\text{-twisted binary quadratic forms over } S, \\
\text{whose even Clifford algebra is isomorphic to } C, \\
\text{and which preserve a fixed } N\text{-orientation}
\end{array}
\right\}
\longleftrightarrow{ \operatorname{Pic}(C)}.
\]
\end{theorem}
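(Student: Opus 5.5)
The plan is to derive Theorem~\ref{Picard} from the preceding oriented classification theorem (the bijection between similarity classes of primitive $N$-twisted forms and isomorphism classes of triples $(C,\theta,E)$), combined with the rigidity result Proposition~\ref{Automorphism}. First I fix the data implicit in the statement. These are the quadratic algebra $C$, a line bundle $N$ with $C/\OS\cong N^\vee$, and a chosen orientation $\theta\colon C/\OS\xrightarrow{\simeq}N^\vee$. By Proposition~\ref{quotientclifford}, a primitive $N$-twisted form $(E,q)$ with $L=\wedge^2E\otimes N$ satisfies
\[
C_0(E,q,L)/\OS\cong \wedge^2E\otimes L^\vee\cong N^\vee
\]
canonically. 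So each such form carries a canonical orientation. ``Preserving the fixed $N$-orientation'' I interpret as requiring that some algebra isomorphism $C_0(E,q,L)\cong C$ carries this canonical orientation to $\theta$.

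Under that interpretation, the left-hand set is exactly the fibre of the oriented classification over the isomorphism class of $(C,\theta)$. Concretely, it is the set of isomorphism classes of triples $(C',\theta',E)$ with $(C',\theta')\cong(C,\theta)$ and $E$ invertible. Transporting along a chosen isomorphism $(C',\theta')\cong(C,\theta)$, I replace every such triple by one of the form $(C,\theta,E)$. The remaining task is to compute when two triples $(C,\theta,E)$ and $(C,\theta,E')$ are isomorphic. By definition, an isomorphism is a pair $(\psi,\phi)$ with $\psi$ an automorphism of the oriented algebra $(C,\theta)$ and $\phi\colon E\otimes_{C,\psi}C\cong E'$. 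Since $2$ is not a zero divisor in $\Gamma(S,\OS)$, Proposition~\ref{Automorphism} forces $\psi=\mathrm{id}$. Hence the triples are isomorphic if and only if $E\cong E'$ as $C$-modules, i.e.\ if and only if $[E]=[E']$ in $\operatorname{Pic}(C)$. The same rigidity shows the transport step is independent of the chosen isomorphism, because any two choices differ by an automorphism of $(C,\theta)$, which is trivial.

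This yields the map (class of $q$) $\mapsto [C_1(E,q,L)]\in\operatorname{Pic}(C)$. It is injective by the preceding paragraph. For surjectivity, given $E\in\operatorname{Pic}(C)$, Lemma~\ref{locally free traceable} and Theorem~\ref{classifi1} (or Theorem~\ref{simprimi} via the universal norm form) produce a primitive form with Clifford pair $(C,E)$. Its value bundle is $L=\wedge^2E\otimes(C/\OS)^\vee\cong\wedge^2E\otimes N$ via $\theta$, so the form is $N$-twisted and carries the orientation $\theta$. Primitivity follows from Proposition~\ref{primitive}. The bijection is set-theoretic only, so no group structure needs to be checked.

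The main obstacle is the bookkeeping of orientations under similarity. A similarity $(\varphi,\mu)$ between $N$-twisted forms need not be compatible with the identifications $L\cong\wedge^2E\otimes N$ and $L'\cong\wedge^2E'\otimes N$, so the induced map $C_0(\varphi,\mu)$ might fail to respect the canonical orientations. I would handle this by checking, via the formula for $\overline{C_0(\varphi,\mu)}$ on $\wedge^2E\otimes L^\vee$ coming from Proposition~\ref{quotientclifford}, that the relevant similarities are exactly those with $\mu=\wedge^2\varphi\otimes\mathrm{id}_N$. I would build this constraint into the meaning of ``preserving the fixed orientation'', so that the oriented classification theorem applies verbatim. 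If needed, I would show that an arbitrary similarity can be adjusted by scaling with a unit to satisfy this condition.
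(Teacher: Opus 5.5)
Your main argument is correct. It rests on the same key input as the paper, Proposition~\ref{Automorphism}, but passes through a different intermediate result.

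The paper starts from Theorem~\ref{Quotient Picard}, the bijection with $\operatorname{Pic}(C)/\sim$. It argues that once the orientation is fixed, the automorphism of $C$ implementing $\sim$ must be the identity, so $\sim$ collapses to isomorphism of $C$-modules. You start instead from the oriented classification of triples $(C,\theta,E)$ and compute its fibre over $(C,\theta)$. There, rigidity kills the automorphism in an isomorphism of triples and makes your transport step independent of choices.

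The paper's route is shorter, since Theorem~\ref{Quotient Picard} already packages surjectivity and injectivity. Yours makes explicit the orientation bookkeeping that the paper leaves implicit. The paper's claim that the relevant similarities ``induce an $N$-oriented automorphism of $C$'' holds only when $\mu=\wedge^2\varphi\otimes\mathrm{id}_N$, which is exactly the restriction you impose.

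Note that the oriented triple theorem you invoke must also be read with this restricted equivalence. The checks behind it are the ones you name:
\begin{itemize}
\item if $\mu=u\,(\wedge^2\varphi\otimes\mathrm{id}_N)$, then $\overline{C_0(\varphi,\mu)}$ acts on $N^\vee$ as multiplication by $u^{-1}$;
\item the similarity $\bigl(\psi,\ \wedge^2\psi\circ-\circ\bar\phi^{-1}\bigr)$ built in the injectivity step of Theorem~\ref{MainTheorem} has $u=1$ whenever $\bar\phi$ respects the orientations.
\end{itemize}

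One correction: drop your final fallback sentence, because it is false. Replacing $\varphi$ by $\lambda\varphi$ for a unit $\lambda$ multiplies both $\mu$ and $\wedge^2\varphi$ by $\lambda^2$, so the unit $u$ is unchanged. Moreover, $u$ is a genuine obstruction, not an artefact.

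For example, take $S=\operatorname{Spec}\mathbb{Z}$, $N=\OS$, and $C=\mathbb{Z}[\tau]/(\tau^2-\tau+6)$ oriented by $\bar\tau\mapsto 1$. Consider the $\wedge^2E$-valued forms $q=2x^2+xy+3y^2$ and $q'=2x^2-xy+3y^2$.
\begin{itemize}
\item Both carry this orientation: use $e_1e_2\mapsto\tau$ for $q$ and $e_1e_2\mapsto\tau-1$ for $q'$.
\item They are similar via $(x,y)\mapsto(x,-y)$ with $\mu=\mathrm{id}$, so $u=-1$.
\item In coordinates, an orientation-preserving similarity would be some $\varphi\in\mathrm{GL}_2(\mathbb{Z})$ with $q'\circ\varphi=\det(\varphi)\,q$. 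For $\det\varphi=1$ this is a proper equivalence between distinct reduced forms, which does not exist. For $\det\varphi=-1$ it contradicts positive definiteness.
\end{itemize}
The two forms correspond to the two non-trivial classes of $\operatorname{Pic}(C)\cong\mathbb{Z}/3\mathbb{Z}$. With unrestricted similarities the left-hand side would therefore have two elements instead of three. The restriction to orientation-preserving similarities is part of what makes the statement true, not a convenience to be removed afterwards.
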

\begin{proof}

Let \( N \) be a chosen representative of the isomorphism class of \( (C / \OS)^\vee \). The similarity classes of binary forms whose even Clifford algebra is isomorphic to \( C \) and that preserve the fixed \( N \)-orientation induce an \( N \)-oriented automorphism of \( C \). By Proposition \ref{Automorphism}, the only automorphism of an \( N \)-oriented quadratic algebra over \( S \) is the identity.  

Consequently, if \( (E, q, L) \) and \( (E', q', L') \) are two similar \( N \)-twisted binary forms, then \( E \sim E' \) in the sense of Theorem \ref{Quotient Picard} if and only if \( E \) and \( E' \) are isomorphic as \( C \)-modules. The remainder of the proof follows directly from the reasoning established in the proof of Theorem \ref{Quotient Picard}.
\end{proof}

\subsubsection{Relationship to the Work of Kneser}
Let $ R $ be a commutative ring with unity, and let $ C $ be a quadratic $ R $-algebra. A \emph{binary quadratic form} over $ R $ is defined as a projective $ R $-module $ M $ of rank 2 together with a quadratic form $ q: M \to R $. The composition theory of such forms is intimately connected to the structure of the \emph{even Clifford algebra} $ C_0(M) $, which itself naturally acquires the structure of a quadratic $ R $-algebra.

In his seminal work~\cite{KNESER}, Kneser introduces and investigates two fundamental groups associated with $ C $. All definitions presented here are drawn directly from~\cite{KNESER}.
\vspace{.05in}
\begin{definition}[\textbf{Binary Quadratic Module of Type  C }]\label{typeC}
A binary quadratic module $ (M, q) $ is said to be of type $ C $ if:
\begin{itemize}
    \item $ M $ is a projective $ C $-module of rank 1,
    
\item The quadratic form satisfies $ q(cx) = n_C(c)\cdot q(x) $ for all $ c \in C, x \in M $, where $ n_C: C \to R $ is the norm map.
\end{itemize}
\end{definition}
\vspace{.05in}
\begin{definition}[\textbf{Isomorphism in  G(C) }]
An isomorphism between two elements $(M, q)$ and $(M', q')$ in  $G(C)$ is a $C$-linear isomorphism:
$$
\varphi: M \to M'
$$
such that the following diagram commutes:

$$
\begin{tikzcd}
M \arrow[rd, "q"'] \arrow[rr, "\varphi"] & & M' \arrow[ld, "q'"] \\
& R &
\end{tikzcd}
$$

This means that for all $x \in M$, we have:
$$
q'(\varphi(x)) = q(x)
$$

In other words, $\varphi$ preserves the quadratic structure — it is a $C$-linear isometry.

\end{definition}
\vspace{.05in}
\begin{definition}[\textbf{Group  G(C) }]
The group $ G(C) $ is defined to be the set of \emph{isomorphism classes} of \emph{primitive binary quadratic modules} $ (M, q) $ of type $ C $, where $ C $ is a fixed quadratic $ R $-algebra. The group operation is induced by the \emph{tensor product over $ C $}, defined as follows: for two such modules $ (M_1, q_1) $ and $ (M_2, q_2) $, their product in $ G(C) $ is given by

$$
(M_1, q_1) \cdot (M_2, q_2) := (M_1 \otimes_C M_2,\, q_1 \otimes q_2),
$$
where $ q_1 \otimes q_2 $ denotes the quadratic form on the tensor product module $ M_1 \otimes_C M_2 $, naturally induced from $ q_1 $ and $ q_2 $.

The identity element of this group is the isomorphism class of the pair $ (C, n_C) $, where $ n_C $ is the norm form on $ C $, defined by $ n_C(c) = c \cdot \overline{c} $, with $ \overline{c} $ being the image of $ c $ under the involution of $ C $.

Inverses in $ G(C) $ are given by \emph{involution}. Specifically, the inverse of an element represented by $ (M, q) $ is the class of the conjugate module $ (\overline{M}, \overline{q}) $, where:

    \begin{itemize}
        \item $ \overline{M} $ is the same underlying $ R $-module as $ M $, but equipped with a twisted $ C $-module structure via the involution $ \sigma: C \to C $, defined by $ \sigma(c) = \overline{c} $. That is, the action of $ C $ on $ \overline{M} $ is given by $ c \cdot x = \sigma(c)x = \overline{c}x $ for all $ c \in C $, $ x \in M $.

\item  The quadratic form $ \overline{q} $ on $ \overline{M} $ is defined by $ \overline{q}(x) = q(x) $; that is, it is the same function as $ q $, but now considered on the involuted module $ \overline{M} $.
\end{itemize}

Although the $ C $-module structure has changed in passing to $ \overline{M} $, the values of $ \overline{q} $ remain in $ R $, and the quadratic form still satisfies the compatibility condition:

$$
\overline{q}(c \cdot x) = n_C(c) \cdot \overline{q}(x),
$$
which holds because:
$$
\overline{q}(c \cdot x) = q(\overline{c} \cdot x) = n_C(\overline{c}) \cdot q(x) = n_C(c) \cdot q(x) = n_C(c) \cdot \overline{q}(x),
$$
using the fact that the norm is invariant under conjugation: $ n_C(\overline{c}) = n_C(c) $.

Recall that since $ C $ is a quadratic $ R $-algebra, it comes equipped with the unique \emph{involution}:

$$
\sigma: C \to C, \quad \sigma(c) = \overline{c},
$$
satisfying the identities:
$$
c + \overline{c} = \tr_C(c), \quad c \cdot \overline{c} = n_C(c),
$$
for all $ c \in C $, where $ \tr_C $ and $ n_C $ denote the trace and norm maps associated with the algebra $ C $, respectively.
\end{definition}
Kneser shows in \cite[Section 6, Theorem 3]{KNESER} that this structure ensures that the set of isomorphism classes of primitive binary quadratic modules over $ C $ forms a well-defined abelian group under the tensor product operation, denoted $ G(C) $. Further, Kneser provides  a natural exact sequence:
$$
1 \to R^\times / N(C^\times) \xrightarrow{\alpha} G(C) \xrightarrow{\beta} \mathrm{Pic}(C) \xrightarrow{\gamma} \mathrm{Pic}(R).
$$

    However, the homomorphism from $ G(C) $ to $ \mathrm{Pic}(C) $ is, in general, neither injective nor surjective. To address this, a new group, denoted $ H(C) $, was introduced by Kneser in \cite[Section 6, Proposition 2]{KNESER}, in order to obtain an isomorphism with $ \mathrm{Pic}(C) $. 

\vspace{.05in}
\begin{definition}[Triple in $ H(C) $]
An element of $ H(C) $ is an isomorphism class of triples $ (M, q, N) $, where:

    \begin{itemize}
        \item $ M $ is an invertible $ C $-module,
    
\item  $ N $ is an invertible $ R $-module,
    
\item  $ q: M \to N $ is a quadratic map satisfying:
        $$
        q(ax) = a^2 q(x), \quad b(x, y) = q(x + y) - q(x) - q(y) \text{ is } R\text{-bilinear},
        $$
        and
        $$
        q(cx) = n_C(c)\cdot q(x), \quad \forall c \in C, x \in M.
        $$
 \end{itemize}
\end{definition}
\vspace{.05in}
\begin{definition}[Isomorphism in $ H(C) $]
Two triples $ (M, q, N) $ and $ (M', q', N') $ are isomorphic in $ H(C) $ if there exist $ C $-linear isomorphism $ \varphi: M \to M' $ and $ R $-linear isomorphism $ \psi: N \to N' $ such that the following diagram commutes:
\[
\begin{tikzcd}
M \arrow[r, "q"] \arrow[d, "\varphi"'] & N \arrow[d, "\psi"] \\
M' \arrow[r, "q'"] & N'
\end{tikzcd}
\]
\end{definition}
\vspace{.05in}
\begin{definition}[Group Operation in $ H(C) $]
For $ (M_1, q_1, N_1), (M_2, q_2, N_2) \in H(C) $, define:
$$
(M_1, q_1, N_1) \cdot (M_2, q_2, N_2) = (M_1 \otimes_C M_2,\ q_1 \otimes q_2,\ N_1 \otimes_R N_2).
$$
\end{definition}

In order to establish the relationship between our work and Kneser's, we begin by clarifying how binary quadratic forms relate to binary quadratic modules of type $ C $.

\vspace{.05in}
\begin{proposition}
Let $ q : M \to N $ be a primitive binary quadratic form over a ring $ R $. Then $ q $ is of type $ C_0(M, q, N) $.
\end{proposition}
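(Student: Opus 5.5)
The plan is to prove the two conditions of Definition~\ref{typeC} separately, working with $N$-valued forms in place of $R$-valued ones. Throughout, $C$ denotes the even Clifford algebra $C_0(M,q,N)$ and $M$ is identified with $C_1(M,q,N)$ via Theorem~\ref{cliffordbimoduleunderlyingmodule}. The type-$C$ conditions are then that $M$ is an invertible $C$-module and that
\[
q(cx)=n_C(c)\,q(x)
\]
for local sections $c$ of $C$ and $x$ of $M$.

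\textbf{Step 1 (invertibility).} This is immediate from Proposition~\ref{primitive}: since $q$ is primitive, $M$ is a locally free $C$-module of rank $1$. So the real content of the statement is the norm identity.

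\textbf{Step 2 (reduction to the free case).} The norm identity is an equality of two maps $C\times M\to N$. Both sides are compatible with restriction to opens and with base change (Proposition~\ref{induceevenclifford}(c) and Proposition~\ref{semilinear}(a)). It therefore suffices to check it Zariski-locally, where $M=Re_1\oplus Re_2$, $N=R$, and
\[
q(xe_1+ye_2)=ax^2+bxy+cy^2 .
\]
In this situation $C$ has basis $\{1,\tau\}$ with $\tau=e_1e_2$, and the relations
\[
\tau^2=b\tau-ac,\qquad \tau e_1=be_1-ae_2,\qquad \tau e_2=ce_1
\]
hold, as in Lemma~\ref{traceablebimodule}. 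From $\tau^2-b\tau+ac=0$ we read off $\operatorname{tr}(\tau)=b$, $n_C(\tau)=ac$, and $\bar\tau=b-\tau$. Hence
\[
n_C(x+y\tau)=x^2+bxy+acy^2 .
\]

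\textbf{Step 3 (the key identity).} Here $N$ is trivial, so the full Clifford algebra $C(M,q)=C\oplus M$ is an honest algebra in which $v^2=q(v)$ for $v\in M$. I will prove the conjugation rule
\[
v\,\gamma=\bar\gamma\,v \qquad\text{for } \gamma\in C,\ v\in M .
\]
By linearity it is enough to take $\gamma=\tau$ and $v\in\{e_1,e_2\}$. For example, $e_1\tau=e_1e_1e_2=a e_2$, while $\bar\tau e_1=(b-e_1e_2)e_1=be_1-e_1(b-e_1e_2)=ae_2$ using $e_2e_1=b-e_1e_2$; the case $e_2$ is similar. Given the rule,
\[
q(\gamma v)=(\gamma v)^2=\gamma\,(v\gamma)\,v=\gamma\bar\gamma\,v^2=n_C(\gamma)\,q(v),
\]
which is the required identity.

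I expect Step 3 to be the main obstacle. The difficulty is not conceptual but in keeping the sign conventions for $\tau$, $\bar\tau$ and the module action consistent with those fixed in Lemma~\ref{traceablebimodule}.

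If the conjugation argument becomes awkward, I will fall back on a direct computation. Expanding
\[
q((x+y\tau)v)=x^2q(v)+xy\,b_q(v,\tau v)+y^2q(\tau v),
\]
it suffices to check the two polynomial identities
\[
b_q(v,\tau v)=b\,q(v)\qquad\text{and}\qquad q(\tau v)=ac\,q(v)
\]
for $v=se_1+te_2$, using $\tau v=(bs+ct)e_1-as\,e_2$. Both are routine expansions.
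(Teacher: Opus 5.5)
Your proposal is correct and follows the paper's own argument. Invertibility of $M$ over $C_0(M,q,N)$ comes from Proposition~\ref{primitive}, and the norm identity comes from $q(cx)=(cx)^2=c\,(xc)\,x=c\,\sigma_C(c)\,x^2=n_C(c)\,q(x)$ via the canonical involution. Your additions fill in what the paper only asserts: the localization that trivializes $N$ (so that $C_0\oplus M$ is an honest algebra and $(cx)^2$ makes sense), the explicit check of $v\gamma=\bar\gamma v$ on the bases $\{1,\tau\}$ and $\{e_1,e_2\}$, and the fallback identities $b_q(v,\tau v)=b\,q(v)$ and $q(\tau v)=ac\,q(v)$; these all check out.
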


\begin{proof}
By Proposition \ref{primitive},  $ M $ is a projective module of rank 1 over the even Clifford algebra $ C := C_0(M, q, N) $.

Now consider the action of $ C $ on $ M $ via Clifford multiplication. For any $ c \in C $ and $ x \in M $, we compute:
$$
q(cx) = (cx)^2 = cx \cdot cx = c x c x.
$$

Using the canonical involution $ \sigma_C $ on the even Clifford algebra, this becomes:
$$
c \sigma_{C}(c) \cdot x^2 = c \sigma_{C}(c) \cdot q(x) =n_{C}(c) \cdot q(x),
$$
where $ n_{C}(c) := c \sigma_{C}(c) $ denotes the norm map associated with $ C $.

Hence, the quadratic form $ q $ satisfies the compatibility condition required by Definition \ref{typeC}, showing that the binary quadratic module $ (M, q, N) $ is indeed of type $ C_0(M, q, N) $. This completes the proof.
\end{proof}

In his seminal work \cite[Theorem 3, Section 6]{KNESER}, Kneser proved that the set $ G(C) $, associated with a given type $ C $, forms a group under composition. However, in that setting, he specifically considered the composition of \emph{primitive binary quadratic forms} of the \textit{same} type; that is, he examined the composition of two forms of type $ C_1 $ and $ C_2 $ where $ C_1 = C_2 = C $. 

In \cite[Proposition 2, Section 6]{KNESER}, he briefly remarks that the set $ H(C) $ also possesses a group structure by invoking the same line of reasoning used in \cite[Theorem 3, Section 6]{KNESER}. In order to draw a connection with Kneser’s work \cite{KNESER}, we consider here a more general notion of composition: namely, the composition of two types $ C_1 $ and $ C_2 $, under the condition that both $ C_1 $ and $ C_2 $ are isomorphic to a fixed type $ C $. It is important to note that this isomorphism need not be the identity map, which was implicitly assumed in Kneser’s original treatment.

Our approach generalizes Kneser's construction in two significant directions:

   \begin{itemize} 
\item  by explicitly incorporating isomorphisms between quadratic algebras of the \textit{same} type, and
    
\item  by allowing for arbitrary base schemes.
\end{itemize}
\vspace{.05in}
\begin{definition}
    We define the set appearing on the left-hand side of Theorem \ref{Picard} as $H(C)$. 
\end{definition}
% Although we retain the notation $ H(C) $ for the resulting structure, its meaning is now broader. It encompasses compositions involving non-identity isomorphisms between types and is defined over arbitrary base schemes.
\vspace{.05in}
\begin{proposition}
    $H(C)$ form an abelian group.
\end{proposition}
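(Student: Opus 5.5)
The plan is to transport the group structure of $\operatorname{Pic}(C)$ to $H(C)$ through the bijection of Theorem~\ref{Picard}. I will then check that the resulting operation agrees with the composition of universal norm forms of Proposition~\ref{compgauss}, so that the law is intrinsic to the forms and is not merely borrowed.

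\textbf{Defining the law.} Fix $N$ with $(C/\OS)^\vee\cong N$, together with the orientation $\theta$. Each class in $H(C)$ has a representative $(E,q,L)$ with $C_0(E,q,L)\cong C$ compatibly with $\theta$. By Proposition~\ref{Automorphism} this identification is unique, and by Proposition~\ref{primitive} $E$ becomes an invertible $C$-module. By Lemma~\ref{formsimuninorm} we may replace $q$ by the universal norm form $j_E\colon E\to J(E)$.

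For two classes $[E_1,q_1,L_1]$ and $[E_2,q_2,L_2]$, I define their product as the class of the composite form
\[
j_{E_1\otimes_C E_2}\colon E_1\otimes_C E_2\to J(E_1)\otimes_{\OS}J(E_2)
\]
given by Proposition~\ref{compgauss}. Since $E_1\otimes_C E_2$ is invertible, the composite is primitive by Proposition~\ref{primitive}. Its even Clifford algebra is again $C$, by the proposition identifying $C_0(E,j_E,J(E))\cong C$. The orientation is inherited through Proposition~\ref{quotientclifford}, and I will check that the twist is by the same $N$.

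\textbf{Well-definedness.} Similar representatives give, via Lemma~\ref{compareuninorm} and Proposition~\ref{simuninorm}, $C$-isomorphisms $E_i\cong E_i'$. Here the rigidity from Proposition~\ref{Automorphism} is essential: because the only oriented automorphism of $C$ is the identity, the isomorphism is genuinely $C$-linear rather than merely twisted by an automorphism. The tensor product of these isomorphisms, together with the induced $J(E_1)\otimes J(E_2)\cong J(E_1')\otimes J(E_2')$, is then a similarity of the composites. This is precisely where non-identity isomorphisms between types are absorbed.

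\textbf{Group axioms.} Under the bijection $H(C)\to\operatorname{Pic}(C)$ of Theorem~\ref{Picard}, the product just defined corresponds to $[E_1]\cdot[E_2]=[E_1\otimes_C E_2]$. Associativity and commutativity therefore follow from the canonical isomorphisms $(E_1\otimes_C E_2)\otimes_C E_3\cong E_1\otimes_C(E_2\otimes_C E_3)$ and $E_1\otimes_C E_2\cong E_2\otimes_C E_1$, the latter using that $C$ is commutative. The identity is the class of $(C,n_C,\OS)$, by Proposition~\ref{uniline}. The inverse of $E$ is $E^{-1}=\mathscr{H}om_C(E,C)$, which I expect to coincide with the conjugate module $\overline{E}$ twisted by the standard involution, as in Kneser's construction. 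Consequently the bijection of Theorem~\ref{Picard} becomes a group isomorphism $H(C)\cong\operatorname{Pic}(C)$.

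\textbf{Main obstacle.} The hardest step is the orientation bookkeeping. I must verify that the composite form is again $N$-twisted, meaning its value bundle is $\wedge^2(E_1\otimes_C E_2)\otimes N$, with the correct orientation, rather than being twisted by $N^{\otimes 2}$. I would attempt this by computing locally, where $E_i\cong C$ and $J(E_i)\cong\OS$ by Proposition~\ref{uniline}. In that setting everything reduces to the norm form on $C$, whose Clifford algebra is identified with $C$ by Lemma~\ref{normCliffordalgebra}. The remaining task is to check that these local identifications glue.
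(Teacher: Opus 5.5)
Your proposal is correct and is essentially the paper's argument, whose proof consists solely of transporting the group structure of $\operatorname{Pic}(C)$ along the bijection of Theorem~\ref{Picard}; your intrinsic description via Proposition~\ref{compgauss} is an optional refinement, and the $N$-twist you worry about is automatic, since Proposition~\ref{quotientclifford} identifies the value bundle of any such form with $\wedge^2E\otimes(C_0/\OS)^\vee\cong\wedge^2E\otimes N$ once $C_0$ is identified with $C$. One aside is false, though your proof does not depend on it: in general $\overline{E}\not\cong\mathscr{H}om_C(E,C)$, because $E\otimes_C\overline{E}\cong C\otimes_{\OS}J(E)$ (e.g.\ $S=\mathbb{P}^1_{\mathbb{Q}}$, $C=\OS\times\OS$, $E=\mathcal{O}(1)\oplus\mathcal{O}$), so the inverse is $\overline{E}\otimes_{\OS}J(E)^{\vee}$ rather than $\overline{E}$.
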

\begin{proof}
This follows directly from  Theorem \ref{Picard}.
\end{proof}

\subsection*{Comparison with Our Notion of Similarity}
In contrast to Kneser's approach, our work considers \emph{similarity classes} of quadratic forms, where two binary quadratic forms  $ (M_1, q_1, N_1) $ and $ (M_2, q_2, N_2) $ over $ R $   are said to be \emph{similar} if there exists a pair $ (\phi, \mu_\phi) $  with $ R $-module isomorphisms 
$$
\phi : M_1 \to M_2 \quad \text{and} \quad \mu_\phi : N_1 \to N_2
$$  
such that the following diagram of $ R $-modules commutes:
$$
\begin{tikzcd}
M_1 \arrow[r, "q_1"] \arrow[d, "\phi"'] & N_1 \arrow[d, "\mu_\phi"] \\
M_2 \arrow[r, "q_2"] & N_2
\end{tikzcd}
$$
That is, the equality  
$$
\mu_\phi \circ q_1 = q_2 \circ \phi
$$  
holds.

Furthermore, a key proposition from Max-Albert Knus’s book \textit{Quadratic and Hermitian Forms over Rings} (\cite[Proposition 7.1.1, Chapter IV, §7]{knus}) asserts that if two binary quadratic forms $ q_1 $ and $ q_2 $ are similar, then the choice of a similarity induces a unique isomorphism between $ C_0(q_1) $ and $ C_0(q_2) $ as algebras. Moreover, there is a unique $R$-module isomorphism between $ C_1(q_1) $ and $ C_1(q_2) $ that respects the respective $ C_0(q_1) $ and $ C_0(q_2) $-module structures. For the corresponding arguments over a base scheme $ S $, we refer to Propositions~\ref{induceevenclifford} and  Proposition~\ref{semilinear}.

In the case of binary forms, we have $ C_1(q_1) = M_1 $ and $ C_1(q_2) = M_2 $, and the aforementioned $ R $-module isomorphism between $ C_1(q_1) $ and $ C_1(q_2) $ coincides precisely with $\phi $. The same reasoning applies in the setting of an arbitrary base scheme $S$.

Thus, in our setting, the isomorphism $ \phi $ between modules is  $ R $-linear and semilinear, not necessarily $ C $-linear, which is required in the definition of $H(C)$. The same remark holds in the relative case over a base scheme $S$ .

\subsection*{Bridging the Gap: Compatibility Conditions and Automorphisms}
To align our approach more closely with Kneser's classification in $ H(C) $, we introduce a \emph{rigidification} of our quadratic algebra structure. Specifically, we impose a condition that restricts the automorphism group of the even Clifford algebra to be trivial. 
Under this assumption, any $ R $-linear isomorphism $ \phi : M_1 \to M_2 $ that satisfies the semilinearity condition becomes automatically $ C $-linear. The same reasoning applies in the setting of an arbitrary base scheme $S$.

This rigidity effectively forces the isomorphism $ \phi $ to intertwine the actions of the even Clifford algebras $ C_0(M_1) $ and $ C_0(M_2) $, and hence reduces similarity to isomorphism in the sense of Kneser.

\subsection*{Conclusion}
Kneser's definition of isomorphism in $ H(C) $ inherently assumes \textit{$ C $-linearity}, which aligns with treating quadratic forms as modules over their even Clifford algebras. In our work, we begin with a broader notion - \textit{ similarity}, defined via \textit{$ R $-linear maps} satisfying a semi-linearity condition. However, when we rigidify the structure of the underlying even Clifford algebra to eliminate nontrivial automorphisms, this similarity condition becomes equivalent to Kneser’s isomorphism of triples in $ H(C) $.

Hence, under appropriate conditions — specifically, when the automorphism group of the even Clifford algebra is trivial — our compatible isomorphisms reduce precisely to the \textit{$ C $-module isomorphisms} used by Kneser. This shows that Theorem~\ref{Picard} can be seen as a generalization of his construction, with Kneser’s framework emerging as a special case when the base algebra $ C $ is sufficiently rigid.

\subsection{Gauss Composition over arbitrary base}
The composition of binary quadratic forms is a long-established topic in number theory. Since Gauss’s seminal work in his \emph{Disquisitiones Arithmeticae}~\cite{Gauss}, there have been numerous efforts to simplify and generalize this concept. Martin Kneser, in his influential work~\cite{KNESER}, introduced the notion of quadratic modules \(E\) as modules over their even Clifford algebra \(C_0(E)\), defining the composition law as a tensor product over \(C_0(E)\). Notably, Kneser succeeded in establishing this composition law for affine schemes without imposing any conditions on the characteristic of the base ring. In the spirit of \cite{KNESER}, the composition theory of quaternary quadratic forms is systematically investigated in \cite{quatcomp}. In this paper, we extend the work of Kneser~\cite{KNESER} in two significant directions:

\begin{itemize}
    \item We consider \emph{primitive binary quadratic forms} over an arbitrary base scheme \( S \).
    
    \item We allow these forms to take values in an \emph{arbitrary line bundle} over \( S \), rather than restricting to the trivial bundle.
\end{itemize}
Let $C$ be a given quadratic algebra over an arbitrary base scheme $S$. We denote by \( \widetilde{H}(C) \) the set of all similarity classes of primitive binary quadratic forms having generalized even Clifford algebras isomorphic to $C$.
\vspace{.05in}
\begin{proposition}\label{pregauss}
  The set $\widetilde{H}(C)$ admits the structure of an abelian group.
\end{proposition}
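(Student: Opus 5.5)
The plan is to transport the group structure of $\mathrm{Pic}(C)$ to $\widetilde{H}(C)$, and then to check that the resulting law is the tensor-product composition of Proposition~\ref{compgauss}.

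\textbf{Setting up the bijection.} First I would fix a representative $N$ of $(C/\OS)^\vee$ together with an orientation $\theta\colon C/\OS\xrightarrow{\simeq}N^\vee$. By Proposition~\ref{quotientclifford}, every primitive form $(E,q,L)$ with $C_0(E,q,L)\cong C$ satisfies
\[
L\cong \wedge^2E\otimes (C_0(E,q,L)/\OS)^\vee .
\]
So after a similarity on the target line bundle it becomes an $N$-twisted form. Next I would choose the identification $C_0(E,q,L)\cong C$ to be compatible with $\theta$. This puts every class of $\widetilde{H}(C)$ into the left-hand set of Theorem~\ref{Picard}. Theorem~\ref{Picard} then gives a set bijection
\[
\Phi\colon \widetilde{H}(C)\to\mathrm{Pic}(C), \qquad [(E,q,L)]\mapsto [C_1(E,q,L)] = [E],
\]
with $E$ carrying its oriented $C$-module structure. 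The inverse sends $E\in\mathrm{Pic}(C)$ to its universal norm form $j_E\colon E\to J(E)$, as in Theorem~\ref{simprimi}.

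\textbf{Defining the law.} I would define
\[
[q_1]\ast[q_2]:=\Phi^{-1}\bigl(\Phi[q_1]\otimes_C\Phi[q_2]\bigr).
\]
Associativity, commutativity, identity and inverses are then inherited from $\mathrm{Pic}(C)$. The identity is the class of the norm form $(C,n_C,\OS)$, by Proposition~\ref{uniline} and Lemma~\ref{normCliffordalgebra}. The inverse of $[E]$ is the class of the dual $E^\vee=\mathscr{H}om_C(E,C)$.

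To make the law explicit, I would apply Proposition~\ref{compgauss}. For $E=E_1\otimes_C E_2$ the universal norm form is
\[
j_{E}\colon E_1\otimes_C E_2\to J(E_1)\otimes J(E_2), \qquad x\otimes y\mapsto j_{E_1}(x)\otimes j_{E_2}(y).
\]
Combined with Lemma~\ref{formsimuninorm}, which identifies each $J(E_i)$ with $L_i$, this shows the composite of $q_1\colon E_1\to L_1$ and $q_2\colon E_2\to L_2$ is represented by a form $E_1\otimes_C E_2\to L_1\otimes L_2$. This recovers Kneser's tensor-product composition.

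\textbf{Main obstacle: independence of choices.} The hardest step is showing that $\ast$ does not depend on the chosen isomorphisms $C_0(E_i,q_i,L_i)\cong C$. Without the orientation, two choices differ by an automorphism $\varphi$ of $C$, and they send $E_i$ to $E_i$ or to $\varphi^*E_i$. By Theorem~\ref{Quotient Picard}, such modules are identified in $\widetilde{H}(C)$, but they need not be isomorphic in $\mathrm{Pic}(C)$. Since $(\varphi^*E_1)\otimes_C E_2\not\cong \varphi^*(E_1\otimes_C E_2)$ in general, the law would be ill-defined. I would try first to avoid this by requiring every identification to respect the fixed orientation $\theta$. Proposition~\ref{Automorphism} then makes the identification unique, provided $2$ is not a zero divisor as in the hypotheses of Theorem~\ref{Picard}. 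I would also check that similarities of forms induce, via Propositions~\ref{induceevenclifford} and~\ref{semilinear}, $\theta$-compatible isomorphisms of the associated pairs, so that $\Phi$ is genuinely well defined.

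If the statement is meant without that hypothesis, I would read $\widetilde{H}(C)$ as the oriented set $H(C)$ and state the restriction explicitly. Once this rigidification is in place, the rest of the argument is formal.
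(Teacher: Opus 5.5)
\textbf{The gap: the map $\Phi$ cannot be constructed.} You define $\Phi\colon\widetilde H(C)\to\mathrm{Pic}(C)$, but a similarity class carries no canonical $N$-twisting or orientation. A similarity $(\varphi,\mu_\varphi)$ acts on $C_0/\OS\cong\wedge^2E\otimes L^\vee$ by $\wedge^2\varphi\otimes(\mu_\varphi^\vee)^{-1}$, and this need not be the identity.

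Over $\Spec \mathbb{Z}$, both $(\mathrm{id},-1)\colon q\to -q$ and $(\mathrm{diag}(1,-1),\mathrm{id})\colon (a,b,c)\to(a,-b,c)$ act by $-1$. After normalising both sides to $\theta$, the induced automorphism of $C$ is the canonical involution $\sigma$. So the two $\theta$-normalised module structures on the same underlying module are $E$ and $\sigma^*E=\bar E$.

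Hence the check you defer (that similarities induce $\theta$-compatible isomorphisms) is false. Proposition~\ref{Automorphism} cannot repair it: it makes $C_0\cong C$ unique once a twisting is fixed, but similarity does not preserve the twisting.

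You note yourself, via Theorem~\ref{Quotient Picard}, that $\widetilde H(C)=\mathrm{Pic}(C)/\mathrm{Aut}(C)$. Take $S=\Spec \mathbb{Z}$ and $C$ the ring of integers of $\mathbb{Q}(\sqrt{-47})$. Then $\widetilde H(C)$ has three elements, the classes of $(1,1,12)$, $(2,\pm1,6)$ and $(3,\pm1,4)$, while $\mathrm{Pic}(C)\cong\mathbb{Z}/5$. So no bijection $\Phi$ exists. Theorem~\ref{Picard} can only hold for the finer relation $\mu_\varphi=\wedge^2\varphi\otimes\mathrm{id}_N$. The natural map runs from that finer set onto $\widetilde H(C)$, not back.

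\textbf{What you do get right, and what it actually proves.} Your ``main obstacle'' paragraph is the correct diagnosis, and it is exactly what the paper's proof passes over. The paper forms $(C,E_1\otimes_C E_2)$ using unspecified identifications $C_0(E_i,q_i,L_i)\cong C$ and calls associativity trivial. It also takes $\bar E$ as the inverse, although $\bar E=\sigma^*E$ represents the same element as $E$ in $\widetilde H(C)$.

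So for $E$ of class $(2,1,6)$, the product $[E]\cdot[E]$ could be $[E\otimes_C E]$ or $[E\otimes_C\bar E]$. These are the distinct orbits $\{\pm2\}$ and $\{0\}$ of $(\mathbb{Z}/5)/\{\pm1\}$. This is the classical failure of Gauss composition on $\mathrm{GL}_2(\mathbb{Z})$-classes.

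Consequently Proposition~\ref{pregauss}, read as a tensor-product composition law on $\widetilde H(C)$ over arbitrary $S$, cannot be proved by any argument. It survives only in the vacuous sense that any nonempty set carries some group structure.

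What your transport-of-structure argument does establish is a different statement. Assume Theorem~\ref{Picard} with the finer equivalence and $2$ a non-zero-divisor. Then the oriented set $H(C)$ is a group isomorphic to $\mathrm{Pic}(C)$, with inverse $E^\vee$ as you say. The paper's $\bar E$ is not the inverse there, since $\bar E\cong E^\vee\otimes_{\OS}\mathrm{Nm}(E)$. This oriented result is the paper's separate proposition that $H(C)$ is an abelian group. You should present it as a corrected statement, not as a proof of this one.
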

\begin{proof}

We define the group operation as follows. Let $(E_1, q_1, L_1)$ and $(E_2, q_2, L_2)$ be two primitive binary quadratic forms over the base scheme $S$. By Lemma~\ref{formsimuninorm}, the universal norm forms associated with the  pairs $(C_0(E_1, q_1, L_1), C_1(E_1, q_1, L_1) = E_1)$ and $(C_0(E_2, q_2, L_2), E_2)$ are similar to the forms $(E_1, q_1, L_1)$ and $(E_2, q_2, L_2)$, respectively. The group operation is then defined by:
\[
(C_0(E_1, q_1, L_1), E_1) \cdot (C_0(E_2, q_2, L_2), E_2) = (C, E_1 \otimes_C E_2),
\]
where $C$ is the given quadratic $\OS$-algebra isomorphic to both $C_0(E_1, q_1, L_1)$ and $C_0(E_2, q_2, L_2)$, and $E_1 \otimes_C E_2$ is the tensor product over $C$.

Associativity and commutativity are trivial consequences of the rules for tensor products. The identity element is $E=C$ with $q=n_C$, the norm. The inverse of a given class $(E, q, L)$ is given by the universal norm form associated with the pair $(C_0(E, q, L), E)$, where the natural $C_0(E, q, L)$-action on $E$ replaced by $(c, x)\mapsto \bar{c}x$, where $\bar{c}$ denotes the image of $c$ under the canonical involution of the generalized even  Clifford algebra $C_0(E, q, L)$. For a comprehensive treatment and detailed justification of these claims, see \cite[Proposition 4.25]{sohamthesis}. Thus, $\widetilde{H}(C)$ acquires the structure of an abelian group.
\end{proof}

\vspace{.05in}
\begin{theorem}\label{thegausscomp}
The set of all primitive binary quadratic forms over an arbitrary base scheme $S$ that have isomorphic generalized even Clifford algebras admits the structure of an abelian group.
\end{theorem}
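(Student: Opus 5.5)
The plan is to reduce Theorem~\ref{thegausscomp} to Proposition~\ref{pregauss}, using the classification Theorem~\ref{simprimi} to move between forms and pairs. Fix the quadratic $\OS$-algebra $C$. Since the statement speaks of primitive forms with isomorphic generalized even Clifford algebras, I interpret the set as $\widetilde{H}(C)$, namely similarity classes of primitive binary forms $(E,q,L)$ with $C_0(E,q,L)\cong C$. For each such form I choose an isomorphism $\varphi_E\colon C_0(E,q,L)\to C$. Via $\varphi_E$, and using the identification $C_1(E,q,L)=E$ of Theorem~\ref{cliffordbimoduleunderlyingmodule}, the module $E$ becomes a $C$-module. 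By Proposition~\ref{primitive} it is locally free of rank $1$ over $C$, so it lies in $\mathrm{Pic}(C)$.

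Next I define the composition
\[
(E_1,q_1,L_1)\ast(E_2,q_2,L_2):=\bigl(E_1\otimes_C E_2,\ j_{E_1\otimes_C E_2},\ J(E_1)\otimes_{\OS}J(E_2)\bigr),
\]
using Proposition~\ref{compgauss}. This result shows that the tensor product is again an invertible $C$-module and that its universal norm form takes values in $J(E_1)\otimes J(E_2)$, which Lemma~\ref{formsimuninorm} identifies with $L_1\otimes L_2$. Primitivity of the product follows from Proposition~\ref{primitive}. Its even Clifford algebra is $C$ by the proposition stating $C_0(E,j_E,J(E))\cong C$.

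For well-definedness, I use that similar forms give isomorphic Clifford pairs (Proposition~\ref{induceevenclifford}, Proposition~\ref{semilinear}), and that isomorphic pairs give similar universal norm forms (Lemma~\ref{compareuninorm}). This step is the main obstacle. A similarity induces a possibly nontrivial automorphism $g$ of $C$, so $E_1$ is only replaced by $g^*E_1'$, and a naive tensor product over $C$ of twisted modules need not be compatible. I would handle this as in Theorem~\ref{Quotient Picard}. The automorphisms of a quadratic algebra act compatibly on both factors: $g^*E_1\otimes_C g^*E_2\cong g^*(E_1\otimes_C E_2)$, so the product class is well defined modulo $\sim$. If this fails for independent twists on the two factors, I would fall back on rigidifying via an $N$-orientation and then invoking Theorem~\ref{Picard} together with Proposition~\ref{Automorphism}.

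Finally I check the group axioms as in Proposition~\ref{pregauss}. Associativity and commutativity follow from those of $\otimes_C$ together with the compatibility of universal norm modules with tensor products (Proposition~\ref{compgauss}). The identity is $(C,n_C,\OS)$, using Proposition~\ref{uniline} and the isomorphism $C\otimes_C E\cong E$. The inverse of $E$ is $\overline{E}$, with the action twisted by the canonical involution. The isomorphism $E\otimes_C\overline{E}\cong C$, compatible with norms, can be checked Zariski-locally, where $E\cong C$ and the claim reduces to $n_C(c)=c\bar c$. These local isomorphisms glue by the uniqueness statements in the universal properties, which yields the abelian group structure.
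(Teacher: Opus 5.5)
Your reduction is exactly the paper's route. Its proof of Theorem~\ref{thegausscomp} is the single line ``follows from Proposition~\ref{pregauss}'', and the proof of Proposition~\ref{pregauss} defines the product as $(C,E_1\otimes_C E_2)$ for ``the given'' $C$, without ever addressing the choice of identifications. You have correctly located the step this route needs. However, your fix does not supply it, and it cannot be supplied. The product depends on the identifications $\varphi_{E_i}\colon C_0(E_i,q_i,L_i)\to C$, and these can be changed independently. By Theorem~\ref{Quotient Picard}, a similarity class determines $E_i$ only up to $E_i\mapsto g_i^*E_i$ with $g_i\in\mathrm{Aut}(C)$. Your isomorphism $g^*E_1\otimes_C g^*E_2\cong g^*(E_1\otimes_C E_2)$ covers only the case $g_1=g_2$.

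Independent twists really do break the law. Take $S=\Spec\mathbb{Z}$ and $C=\mathbb{Z}[\omega]$ with $\omega^2=\omega-6$ (discriminant $-23$). Then $\mathrm{Pic}(C)\cong\mathbb{Z}/3=\{1,g,g^{-1}\}$, and conjugation acts by inversion because $I\overline{I}=(N(I))$. So $\widetilde{H}(C)$ has two elements: the class of $x^2+xy+6y^2$, and the class of $q=2x^2+xy+3y^2$, which is similar to $2x^2-xy+3y^2$ via $y\mapsto -y$. The two possible $\varphi_E$ for $q$ differ by conjugation, and they make $E$ represent $g$ or $g^{-1}$. Hence $[q]\ast[q]$ comes out either as $g^2=g^{-1}$, the class of $q$, or as $gg^{-1}=1$, the class of $x^2+xy+6y^2$. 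These are not similar: both forms are positive definite, so any similarity is an isometry, and only $x^2+xy+6y^2$ represents $1$. This is the classical reason Gauss composition lives on $\mathrm{SL}_2(\mathbb{Z})$-classes rather than $\mathrm{GL}_2(\mathbb{Z})$-classes. Read literally as ``some abelian group structure exists'', the statement would be empty.

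Your orientation fallback proves a different statement. Theorem~\ref{Picard} concerns $N$-twisted forms up to orientation-preserving similarity, a set in bijection with $\mathrm{Pic}(C)$: three elements in the example, against two in $\widetilde{H}(C)$. Moreover, Proposition~\ref{Automorphism} requires $2$ to be a non-zero-divisor, whereas the theorem claims an arbitrary base.

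What does work over any $S$ is to make the identification part of the data. Consider primitive $(E,q,L)$ together with $\varphi_E\colon C_0(E,q,L)\xrightarrow{\sim}C$, modulo similarities $(\varphi,\lambda)$ with $\varphi_{E'}\circ C_0(\varphi,\lambda)=\varphi_E$. By semilinearity (Proposition~\ref{semilinear}), the $E$-components of such similarities are $C$-linear. The argument of Theorem~\ref{classifi1}, run with the identity on $C$, then identifies these classes with $\mathrm{Pic}(C)$, where $\otimes_C$ is a group law; compare Kneser's $H(C)$.

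Your inverse also needs correcting, and the paper makes the same claim. In general $E\otimes_C\overline{E}\cong C\otimes_{\OS}J(E)$. Your local isomorphisms $(E\otimes_C\overline{E})|_U\cong C|_U$ differ on overlaps by the cocycle $n_C(u_{UV})$ of the changes of generator, so they glue only when $C\otimes_{\OS}J(E)\cong C$. For example, with $C=\OS\times\OS$ and $E=M\oplus\OS$ one gets $M\oplus M$. The inverse in $\mathrm{Pic}(C)$ is $\overline{E}\otimes_{\OS}J(E)^{\vee}\cong\mathscr{H}om_C(E,C)$.
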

\begin{proof}
    This result follows directly from Proposition \ref{pregauss}.
\end{proof}

These generalizations broaden the classical theory to a more flexible and geometrically robust setting. 
% Wood in her work \cite{Wood}, gave a  set-theoretical bijection with a disjoint union of quotient sets of Picard groups (in
% the primitive case). But she gave this bijection for \textit{linear} binary quadratic forms also her motivations being related to
% moduli problems, she gave neither a group law nor an isomorphism with some Picard group. Dallaporta, in his work \cite{dallaporta} define the group law but for \textit{linear} binary quadratic forms. Wood's \cite{Wood} and Dallaporta's \cite{dallaporta} approach was for \textit{linear} binary quadratic forms and they have not used Clifford point of view and in contrast, our main innovation lies precisely in adopting  Clifford-theoretic viewpoint. 

In her work \cite{Wood}, Wood established a set-theoretic bijection—restricted to the primitive case—between linear binary quadratic forms and a disjoint union of quotient sets of Picard groups. Motivated primarily by moduli-theoretic considerations, she did not formulate a group law nor did she identify an explicit isomorphism with any particular Picard group. Subsequently, Dallaporta \cite{dallaporta} introduced a group structure on certain classes of primitive binary quadratic forms; however, his construction is again restricted to the setting of \textit{linear} binary quadratic forms. Moreover, it requires the additional assumption that $2$ is not a zero divisor in the global sections $\Gamma(S, \OS)$, see \cite[Theorem 3.24]{dallaporta}. Both Wood and Dallaporta consider \textit{linear} binary quadratic forms, and  their frameworks operate entirely outside the Clifford algebra perspective whereas we are considering binary quadratic forms in the classical sense over an arbitrary base scheme $S$, incorporating the Clifford-theoretic viewpoint, which also allows for a more intrinsic and algebraically robust formulation of the theory. 

% and also their frameworks operate entirely outside the Clifford algebra perspective. In contrast, the principal innovation of our approach lies in systematically incorporating the Clifford-theoretic viewpoint, which allows for a more intrinsic and algebraically robust formulation of the theory.

Theorem \ref{thegausscomp} gives the composition law on similarity classes of primitive binary quadratic forms taking values in line bundles over the base scheme $S$.
\vspace{.05in}
\begin{rmk}
    The extension of Gauss composition to the setting of primitive binary quadratic forms over an arbitrary base scheme $S$ can be carried out without imposing any additional conditions on $S$. However, when relating Gauss composition to $\mathrm{Pic}(C)$, the Picard group of the quadratic algebra $C$, it is necessary to impose certain technical assumptions on the base scheme $S$.  Specifically, as detailed in Theorem \ref{Picard}, we required $2$ to be a non-zero divisor in the global sections $\Gamma(S, \OS)$, to get a well-defined set-theoretic map from $H(C)$ to $\mathrm{Pic}(C)$. Thus, while the composition law itself is unconditional, the existence of natural comparison with the Picard group depends crucially on the geometry of the base.
\end{rmk}

\bibliographystyle{plain}
\bibliography{mybib}

@incollection{highcomp,
 author = {Bhargava, Manjul},
 title = {Higher composition laws and applications},
 booktitle = {Proceedings of the international congress of mathematicians (ICM), Madrid, Spain, August 22--30, 2006. Volume II: Invited lectures},
 isbn = {978-3-03719-022-7},
 pages = {271--294},
 year = {2006},
 publisher = {Z{\"u}rich: European Mathematical Society (EMS)},
 language = {English},
 zbMATH = {5057398},
 Zbl = {1129.11049}
}

@article{quatcomp,
 author = {Kneser, M. and Knus, M.-A. and Ojanguren, M. and Parimala, R. and Sridharan, R.},
 title = {Composition of quaternary quadratic forms},
 fjournal = {Compositio Mathematica},
 journal = {Compos. Math.},
 issn = {0010-437X},
 volume = {60},
 pages = {133--150},
 year = {1986},
 language = {English},
 url = {https://eudml.org/doc/89801},
 zbMATH = {3989443},
 Zbl = {0612.10015}
}

@book {dual,
    AUTHOR = {Eisenbud, David and Harris, Joe},
     TITLE = {3264 and all that---a second course in algebraic geometry},
 PUBLISHER = {Cambridge University Press, Cambridge},
      YEAR = {2016},
     PAGES = {xiv+616},
      ISBN = {978-1-107-60272-4; 978-1-107-01708-5},
   MRCLASS = {14-01 (14C15 14M15 14N10)},
  MRNUMBER = {3617981},
MRREVIEWER = {Arnaud\ Beauville},
       DOI = {10.1017/CBO9781139062046},
       URL = {https://doi.org/10.1017/CBO9781139062046},
}

@article{traceablewood,
 author = {Erman, Daniel and Wood, Melanie Matchett},
 title = {Gauss composition for {{\(\mathbb{P}^1\)}}, and the universal {Jacobian} of the {Hurwitz} space of double covers},
 fjournal = {Journal of Algebra},
 journal = {J. Algebra},
 issn = {0021-8693},
 volume = {470},
 pages = {320--352},
 year = {2017},
 language = {English},
 doi = {10.1016/j.jalgebra.2016.08.036},
 zbMATH = {6647041},
 Zbl = {1352.14001}
}

@article{sohamthesis,
  title={Similarity Classification of Binary Quadratic Forms using Clifford Pairs with Applications to {Gauss} Composition on General Schemes},
  author={Mondal, Soham},
  journal={IIT Madras: dissertation},
  year={2025}
}

@article{chanconic,
  title={Conic bundles and Clifford algebras},
  author={Chan, Daniel and Ingalls, Colin},
  journal={Contemporary Math},
  volume={562},
  pages={53--76},
  year={2012}
}

@article{voightcharacterizing,
 author = {Voight, John},
 title = {Characterizing quaternion rings over an arbitrary base},
 fjournal = {Journal f{\"u}r die Reine und Angewandte Mathematik},
 journal = {J. Reine Angew. Math.},
 issn = {0075-4102},
 volume = {657},
 pages = {113--134},
 year = {2011},
 language = {English},
 doi = {10.1515/CRELLE.2011.054},
 zbMATH = {5947067},
 Zbl = {1229.11146}
}

@article{tevbal,
 author = {Venkata Balaji, Thiruvalloor Eesanaipaadi},
 title = {Line-bundle-valued ternary quadratic forms over schemes},
 fjournal = {Journal of Pure and Applied Algebra},
 journal = {J. Pure Appl. Algebra},
 issn = {0022-4049},
 volume = {208},
 number = {1},
 pages = {237--259},
 year = {2007},
 language = {English},
 doi = {10.1016/j.jpaa.2005.12.002},
 zbMATH = {5078568},
 Zbl = {1167.11014}
}

@book{parimalareduced,
 editor = {Jacob, William B. and Lam, Tsit-Yuen and Robson, Robert O.},
 title = {Recent advances in real algebraic geometry and quadratic forms. {Proceedings} of the {RAGSQUAD} year, {Berkeley}, {CA}, {USA}, 1990-1991},
 fseries = {Contemporary Mathematics},
 series = {Contemp. Math.},
 issn = {0271-4132},
 volume = {155},
 isbn = {0-8218-5154-3},
 year = {1994},
 publisher = {Providence, RI: American Mathematical Society},
 language = {English},
 doi = {10.1090/conm/155},
 zbMATH = {568744},
 Zbl = {0788.00051}
}

@article{caenepeel,
 author = {Caenepeel, Stefaan and Van Oystaeyen, Freddy},
 title = {Quadratic forms with values in invertible modules},
 fjournal = {\(K\)-Theory},
 journal = {\(K\)-Theory},
 issn = {0920-3036},
 volume = {7},
 number = {1},
 pages = {23--40},
 year = {1993},
 language = {English},
 doi = {10.1007/BF00962792},
 zbMATH = {227621},
 Zbl = {0787.13004}
}

@incollection{Gauss,
  title={Carl {Friedrich Gauss}, disquisitiones arithmeticae (1801)},
  author={Neumann, Olaf},
  booktitle={Landmark Writings in Western Mathematics 1640-1940},
  pages={303--315},
  year={2005},
  publisher={Elsevier}
}

@incollection{bichselknus,
 author = {Bichsel, W. and Knus, M.-A.},
 title = {Quadratic forms with values in line bundles},
 booktitle = {Recent advances in real algebraic geometry and quadratic forms. Proceedings of the RAGSQUAD year, Berkeley, CA, USA, 1990-1991},
 isbn = {0-8218-5154-3},
 pages = {293--306},
 year = {1994},
 publisher = {Providence, RI: American Mathematical Society},
 language = {English},
 zbMATH = {589363},
 Zbl = {0810.11023}
}

@article{Bhatt_2012, title={Derived splinters in positive characteristic}, volume={148}, DOI={10.1112/S0010437X12000309}, number={6}, journal={Compositio Mathematica}, author={Bhatt, Bhargav}, year={2012}, pages={1757–1786}}

@article{Auel_2014,
   title={Fibrations in complete intersections of quadrics, Clifford algebras, derived categories, and rationality problems},
   volume={102},
   ISSN={0021-7824},
   url={http://dx.doi.org/10.1016/j.matpur.2013.11.009},
   DOI={10.1016/j.matpur.2013.11.009},
   number={1},
   journal={Journal de Mathématiques Pures et Appliquées},
   publisher={Elsevier BV},
   author={Auel, Asher and Bernardara, Marcello and Bolognesi, Michele},
   year={2014},
   month=jul, pages={249–291} }

@article{Wood,
 author = {Wood, Melanie Matchett},
 title = {Gauss composition over an arbitrary base},
 fjournal = {Advances in Mathematics},
 journal = {Adv. Math.},
 issn = {0001-8708},
 volume = {226},
 number = {2},
 pages = {1756--1771},
 year = {2011},
 language = {English},
 doi = {10.1016/j.aim.2010.08.018},
 zbMATH = {5835547},
 Zbl = {1262.11049}
}

@article{dallaporta,
 author = {Dallaporta, William},
 title = {Recovering the {Picard} group of quadratic algebras from {Wood}'s binary quadratic forms},
 fjournal = {International Journal of Number Theory},
 journal = {Int. J. Number Theory},
 issn = {1793-0421},
 volume = {21},
 number = {4},
 pages = {739--767},
 year = {2025},
 language = {English},
 doi = {10.1142/S179304212550037X},
 zbMATH = {8014145}
}

@article {Lowrank,
    AUTHOR = {Voight, John},
     TITLE = {Rings of low rank with a standard involution},
   JOURNAL = {Illinois J. Math.},
  FJOURNAL = {Illinois Journal of Mathematics},
    VOLUME = {55},
      YEAR = {2011},
    NUMBER = {3},
     PAGES = {1135--1154},
      ISSN = {0019-2082,1945-6581},
   MRCLASS = {16W10 (11E81 16G30)},
  MRNUMBER = {3069299},
MRREVIEWER = {Rosali\ Brusamarello},
       URL = {http://projecteuclid.org/euclid.ijm/1369841800},
}

@book{knus,
 author = {Knus, Max-Albert},
 title = {Quadratic and {Hermitian} forms over rings},
 fseries = {Grundlehren der Mathematischen Wissenschaften},
 series = {Grundlehren Math. Wiss.},
 issn = {0072-7830},
 volume = {294},
 isbn = {3-540-52117-8},
 year = {1991},
 publisher = {Berlin etc.: Springer-Verlag},
 language = {English},
 zbMATH = {48964},
 Zbl = {0756.11008}
}

@article{auel2015surjectivity,
 author = {Auel, Asher},
 title = {Surjectivity of the total {Clifford} invariant and {Brauer} dimension},
 fjournal = {Journal of Algebra},
 journal = {J. Algebra},
 issn = {0021-8693},
 volume = {443},
 pages = {395--421},
 year = {2015},
 language = {English},
 doi = {10.1016/j.jalgebra.2015.06.043},
 zbMATH = {6485321},
 Zbl = {1391.11071}
}

@phdthesis{bichsel1985quadratische,
  title={Quadratische raeume mit werten in invertierbaren moduln},
  author={Bichsel, Walter},
  year={1985},
  school={ETH Zurich}
}

@article{KNESER,
title = {Composition of binary quadratic forms},
journal = {Journal of Number Theory},
volume = {15},
number = {3},
pages = {406-413},
year = {1982},
issn = {0022-314X},
doi = {https://doi.org/10.1016/0022-314X(82)90041-5},
url = {https://www.sciencedirect.com/science/article/pii/0022314X82900415},
author = {Martin Kneser}
}

\end{document}